\documentclass[draft,12pt,reqno,a4paper]{amsart}
% fleqn giver ligninger til venstre
% \documentclass[  12pt,notitlepage,reqno]{amsart}

\hfuzz=4pt

% \addtolength{\textheight}{2cm} \addtolength{\textwidth}{2cm}
% \addtolength{\oddsidemargin}{-1cm}
% \addtolength{\evensidemargin}{-1cm} \addtolength{\topmargin}{-1cm}

\usepackage[margin=3cm,footskip=1cm]{geometry}

\usepackage{amssymb} \usepackage{amsmath} \usepackage{amsfonts}
\usepackage{amsthm} \usepackage{mathtools}
% \DeclarePairedDelimiterX\ketbra[2]{\lvert}{\rvert}{%
%   #1 \delimsize\rangle\delimsize\langle #2
% }

\usepackage{enumerate} \usepackage[latin1]{inputenc}
\usepackage[shortlabels]{enumitem}

\usepackage{color} 
\usepackage{graphicx} \usepackage{verbatim}

\DeclareMathOperator*{\slim}{s--lim}

\DeclareMathOperator*{\vGlim}{{\mathcal G}--lim}

\newcommand{\supp}{\operatorname{supp}}

\newcommand{\R}{{\mathbb{R}}}

 \renewcommand{\c}{{\rm c}}
\newcommand{\e}{{\rm e}} \newcommand{\ess}{{\rm ess}}
 \renewcommand{\i}{{\rm i}}
\renewcommand{\d}{{\rm d}}

\DeclarePairedDelimiter\inp\langle\rangle
% \inp*{...} = \left\langle ... \right\rangle (fx:"\inp{x}" yields
% same as "\langle x\rangle") (used with \big, \Big etc; fx: "\inp
% [\big]{x,y}" yields same as "\big\langle x,y \big\rangle")

% Brug kommandoen \MoveEqLeft i fÃ¸rste linie i align environment i
% stedet for & tegn (rykker til venstre)

\newcommand\parb[2][]{#1 \big ( #2#1\big )} \newcommand\parbb[2][]{#1
  \Big ( #2#1\Big )} 
 \newcommand{\grad}{{\rm grad }\,}
 
 \renewcommand{\exp}{{\rm exp}}
 \newcommand{\mfor}{\text{ for }}

\newcommand{\vA}{{\mathcal A}} 
 
\newcommand{\vE}{{\mathcal E}} 
\newcommand{\vG}{{\mathcal G}} \newcommand{\vI}{{\mathcal I}}
 \newcommand{\vH}{{\mathcal H}}
 
 \newcommand{\vN}{{\mathcal N}}

% \newcommand {\SP[#1,#2]}{{E_{\{#1\}}(#2)}}
% \newcommand{\Rtze}{{R_{\theta,z}(\epsilon)}}

% Hvilke saetninger, lemmaer etc. og hvordan Paa denne maade faar de
% alle en faelles nummerering:

\theoremstyle{plain}%default
\newtheorem{thm}{Theorem}[section]
\newtheorem{proposition}[thm]{Proposition}
\newtheorem{lemma}[thm]{Lemma} \newtheorem{corollary}[thm]{Corollary}
\theoremstyle{definition} 
\newtheorem{defn}[thm]{Definition} 

 \newtheorem{cond}[thm]{Condition}
 \newtheorem{remark}[thm]{Remark}

 \newtheorem*{remarks*}{Remarks}
\newtheorem*{remark*}{Remark}

% Et par makroer til at referere til disse saetninger etc.
% \newcommand{\refthm}[1]{Theorem~\ref{#1}}
% \newcommand{\refprop}[1]{Proposition~\ref{#1}}
% \newcommand{\reflemma}[1]{Lemma~\ref{#1}}
% \newcommand{\refcor}[1]{Corollary~\ref{#1}}
% \newcommand{\refdefn}[1]{Definition~\ref{#1}}
% \newcommand{\refremark}[1]{Remark~\ref{#1}}
% \newcommand{\refexample}[1]{Example~\ref{#1}}

% Ligningsnumre som (section-nr.eq-nr)
% \numberwithin{equation}{section}

\numberwithin{equation}{section}

\title{Time-dependent  scattering
    theory on manifolds}

\thanks{{K.I. is supported by JSPS KAKENHI grant nr. 25800073, 17K05325. 
E.S. is supported by  DFF grant nr.  4181-00042.}}

\author{K. Ito}
\address[K. Ito]{Graduate School of Mathematical Sciences, The University of Tokyo\\
3-8-1 Komaba, Meguro-ku, Tokyo 153-8914, Japan}
\email{ito@ms.u-tokyo.ac.jp}

\author{E. Skibsted} \address[E. Skibsted]{Institut for Matematiske
  Fag \\
  Aarhus Universitet\\ Ny Munkegade 8000 Aarhus C, Denmark}
\email{skibsted@math.au.dk}

%\date{\today}
\begin{document}
\begin{abstract}
This is the third and the last paper in a series of papers 
on  spectral and scattering theory for the 
Schr\"odinger operator on a manifold possessing an escape function,
for example  a manifold  with asymptotically Euclidean and/or hyperbolic ends. 
Here we discuss the time-dependent scattering theory.
A long-range perturbation is allowed,
and scattering by obstacles, possibly non-smooth {and/or unbounded
in a certain way}, 
is included in the theory.
We also resolve  a conjecture
by Hempel--Post--Weder on cross-ends transmissions 
between two or more ends,
formulated in a time-dependent manner.
\end{abstract}

\maketitle
\tableofcontents

\section{Introduction}\label{sec:intr} 

Let $(M,g)$ be a connected Riemannian manifold.
In this paper we study time-dependent scattering  theory for the geometric Schr\"odinger operator
\begin{align*}
H=H_0+V; \quad
H_0=-\tfrac12\Delta=\tfrac12p_i^*g^{ij}p_j,\ p_i=-\mathrm i\partial_i,
\end{align*}
on the Hilbert space ${\mathcal H}=L^2(M)$. The potential $V$ is
real-valued and bounded, and the self-adjointness of $H$ is realized
by the Dirichlet boundary condition. 
For previous time-dependent short-range scattering theories on manifolds we
refer to \cite{Co,Il, IN,IS1}. For a review of 
scattering by an   unbounded obstacle in $\R^d$ as studied in \cite{Co,Il}
we refer to \cite{Ya2}.

Our theory includes a generalization of
asymptotic completeness on the  Euclidean space 
stated in
terms of  a ``time-dilated'' comparison dynamics by Yafaev  
\cite{Ya1}.  In its {simplest} form given for  $H=-\tfrac12 \Delta$ on the
Euclidean space $\R^d$ it
amounts to the comparison (using spherical coordinates {$(r,\sigma)$})
\begin{align*}
  \parb{\e^{\pm \i tH}f}{(r,\sigma)}\approx r^{(1-d)/2}\e^{\pm
      \i r^2/(2t)}t^{-1/2}\phi_\pm( r/t,\sigma) \text{ for }t\to \infty,
\end{align*} and showing a unitary correspondance
$f\leftrightarrow \phi_\pm$. 
The virtue of this  type of comparison dynamics is its
simplicity and, more importantly,  that it can be generalized to
manifolds  without  invoking 
 micro-local analysis (for example the Fourier transform). 
It was used
in \cite{IS1} to obtain the asymptotic completeness 
on a manifold with asymptotically Euclidean ends of
short-range type by a time-dependent method. In this paper 
we develop a time-dependent theory applicable to 
both asymptotically Euclidean and hyperbolic ends of long-range type, 
and moreover we show
a  relationship between the time-dependent and stationary wave operators,
the latter of which been  constructed in \cite{IS3}. 
More precisely, let $F^{\pm}$ be the future/past stationary wave operators of
\cite{IS3}  and $W^\pm$ be the time-dependent operators of this paper.
Then we show that 
\begin{align}\label{eq:g}
  F^\pm=(W^\pm)^*.
\end{align} These are relationships for  general long-range models.
However under additional conditions more specialized wave operators of  
`Dollard type'  are constructed by a time-dilated
comparison dynamics modeled after Dollard's construction {on} the
Euclidean space \cite{Do}. Deducing  from  \eqref{eq:g} we then obtain 
similar results for the simplified Dollard type operators. Moreover
yet similar  formulas are  obtained under short-range 
conditions. In particular the latter  formulas complement the purely
time-dependent theory of \cite{IS1}. We use the
Cook--Kuroda method to show the existence of the time-dependent wave
operators in the most general setting. Asymptotic
completeness is then a consequence of \eqref{eq:g} (and its simplified
versions), see \cite{II}
for a  similar application of stationary scattering theory in
the Euclidean context. 

Another result  of this paper is a resolution of a conjecture
of \cite{HPW} on cross-ends transmissions on a manifold with two or
more ends. We give a time-dependent formulation in a strong form, see
Corollary \ref{cor:transmission2}. Again the result is a consequence
of the stationary theory \cite{IS3}. In fact we believe that
stationary theory is essential not   only for our proof but for  
\emph{any} conceivable proof. This is in  contrast to asymptotic
completeness of time-dependent wave operators. 
Although  the
stationary theory is important for our procedure of proof,  
the highly `symmetric' arguments of 
\cite{IS1} would plausibly extend in a modified form to provide a purely time-dependent
proof of  asymptotic
completeness. However this is not pursued by us partly
because in general such procedure  seems to require other  conditions
(and whence would not yield a strict improvement) compared to
the approach given in this paper.

\subsection{Setting and review}\label{subsec:preliminary result}
Our  paper is a direct
continuation of  \cite{IS2,IS3}, and we start by recalling the setting and 
various results from there.
This section exhibits only a minimal review,
and we refer to  \cite[Subsection~1.2]{IS2} for 
several examples of manifolds satisfying the abstract conditions appearing below.

\subsection{Basic framework}

We assume an \textit{end} structure on $M$ in a somewhat disguised form.\begin{cond}\label{cond:12.6.2.21.13}
Let $(M,g)$ be a connected Riemannian manifold  of dimension $d\ge 1$.
There exist a function $r\in C^\infty(M)$ with image $r(M)=[1,\infty)$
and constants 
$c>0$ and $r_0\ge 2$ such that:
\begin{enumerate}
\item\label{item:12.4.7.19.40} 
The gradient vector field $\omega=\mathop{\mathrm{grad}} r\in\mathfrak X(M)$ is
forward complete in the sense that the forward integral curve
$(x(t))_{t\geq 0}$ of $\omega$ is defined
for any initial point $x=x(0)\in M$.
\item\label{item:12.4.7.19.40b}
The bound $|\mathrm dr|=|\omega|\ge c$ holds on $\{x\in M\,|\, r(x)> r_0/2\}$. 
\end{enumerate}
\end{cond}

Under Condition~\ref{cond:12.6.2.21.13}
each component of the subset 
$E=\{x\in M\,|\, r(x)>r_0\}$
is called an \emph{end} of $M$,
and, along with Condition~\ref{cond:12.6.2.21.13a} below, 
the function $r$ may model a distance function there.
We note that by 
Condition~\ref{cond:12.6.2.21.13} (\ref{item:12.4.7.19.40b})
and the implicit function theorem  
the $r$-spheres 
\begin{align*}
S_R=\{x\in M\,|\,r(x)=R\};\quad R> r_0/2,
\end{align*} 
are submanifolds of $M$.
We will later in this  section introduce corresponding  \textit{spherical coordinates} on $E$.

Let us impose more conditions on the geometry of $E$ in terms of the radius function $r$.
Choose $\chi\in C^\infty(\mathbb{R})$ such that 
\begin{align}
\chi(t)
=\left\{\begin{array}{ll}
1 &\mbox{ for } t \le 1, \\
0 &\mbox{ for } t \ge 2,
\end{array}
\right.
\quad
\chi\ge 0,
\quad
\chi'\le 0,\quad \sqrt{1-\chi}\in C^\infty,
\label{eq:14.1.7.23.24}
\end{align}
and set 
\begin{align}
\eta=1-\chi(2r/r_0),\quad
\tilde\eta=|\mathrm dr|^{-2}\eta 
=|\mathrm dr|^{-2}\bigl(1-\chi(2r/r_0)\bigr).
\label{eq:13.9.23.5.54b}
\end{align}
We introduce a  ``radial'' differential operator $A$:
\begin{align}
A
=\mathop{\mathrm{Re}} p^r
=\tfrac12\bigl( p^r+( p^r)^*\bigr);\quad 
p^r=-\mathrm i \nabla^r,\ 
\nabla^r=\nabla_{\omega}=g^{ij}(\nabla_i r)\nabla_j,
\label{eq:13.9.23.2.24}
\end{align}
and also the ``spherical'' tensor $\ell$ 
and the associated differential operator $L$: 
\begin{align}
\ell=g-\tilde\eta\,\mathrm dr\otimes \mathrm dr,\quad 
L=p_i^*\ell^{ij}p_j
.
\label{eq:13.9.23.5.54}
\end{align}
In the spherical coordinates, the tensor $\ell$
may be identified with the pull-back of $g$ to the $r$-spheres. We
call $L$ the spherical part of $-\Delta$. If $|\d r|=1$ then
$-L$ acts as  the Laplace--Beltrami operator on $S_r$ (in general as a kind of
perturbation of this operator).
 We remark that the tensor $\ell$ clearly
satisfies
\begin{align}
0\le \ell\le g,\quad
\ell^{\bullet i}(\nabla r)_i=(1-\eta)\mathrm dr,
\label{eq:14.12.23.13.49}
\end{align}
where the first bounds of \eqref{eq:14.12.23.13.49} are
understood as quadratic form estimates on the fibers
of the tangent bundle of $M$.

Let us recall a local expression of the Levi--Civita connection $\nabla$:
If we denote the Christoffel symbol by 
$\Gamma^k_{ij}=\tfrac12g^{kl}(\partial_i g_{lj}+\partial_j g_{li}-\partial_lg_{ij})$,
then for any smooth function $f$ on $M$
\begin{align}
(\nabla f)_i&=(\nabla_if)=(\mathrm d f)_i=\partial_if,\quad
(\nabla^2 f)_{ij}
=\partial_i\partial_j f-\Gamma^k_{ij}\partial_kf.
\label{eq:14.3.5.0.14}
\end{align}
Note that $\nabla^2f $ is the geometric Hessian of $f$. 

\begin{cond}\label{cond:12.6.2.21.13a}
There exist constants $\tau,C>0$ such that 
globally on $M$
\begin{subequations}{
\begin{align}
\begin{split}
|\nabla|\mathrm dr|^2|&\leq Cr^{-1-\tau/2},\qquad\qquad
\bigl|\ell^{\bullet i}\nabla_i\nabla^r|\mathrm dr|^2\bigr|\le Cr^{-1-\tau/2},\\
|\nabla^k r|&\leq C\text{ for }\,k\in\{1,2\},\qquad\quad 
\bigl|\ell^{\bullet i}\nabla_i\Delta r\bigr|\leq Cr^{-1-\tau/2}.
\end{split}\label{eq:13.9.28.13.28}
\end{align}
In addition, there} exists $\sigma'>0$ such that for all   $R>r_0/2$, and as  quadratic forms on fibers of the tangent bundle of $S_R$, 
\begin{align}
R\,\iota^*_R\nabla^2r\ge \tfrac12\sigma' |\mathrm dr|^2\iota^*_R g,
\label{eq:13.9.5.3.30}
\end{align} where $\iota_R\colon S_R\hookrightarrow M$ 
is the inclusion map.
\end{subequations}
\end{cond}

The second bound of \eqref{eq:13.9.28.13.28} is not necessary for the results of 
\cite{IS2}, but it is for \cite{IS3} and this paper.
We note that 
Condition~\ref{cond:12.6.2.21.13a} and the identity
\begin{align}
(\nabla^2r)^{ij}(\nabla r)_j=\tfrac12(\nabla|\mathrm dr|^2)^i
\label{eq:14.12.27.3.22}
\end{align} was used in \cite{IS2} to obtain the  more practical version
of \eqref{eq:13.9.5.3.30}: For any $\sigma\in(0,\sigma')$ and $\tau$
as in Condition~\ref{cond:12.6.2.21.13a}  there exists $C>0$
such that globally on $M$
\begin{align}
r\parb{\nabla^2r-\tfrac 12\tilde \eta^2(\nabla^r|\mathrm dr|^2)\d r
  \otimes\d r} \ge \tfrac12\sigma |\mathrm dr|^2 \ell- Cr^{-\tau}g.
\label{eq:13.9.5.3.30C}
\end{align} 
 If $|\mathrm dr|=1$  for
    $r>r_0/2$ then \eqref{eq:13.9.5.3.30C} is fulfilled with
    $\sigma=\sigma'$ and any $\tau$.

Next we introduce an effective potential:
\begin{align}
q=V+\tfrac18\tilde\eta \bigl[(\Delta r)^2+2\nabla^r\Delta r\bigr].
\label{eq:14.12.10.22.48}
\end{align}
{Here we remark that 
\begin{align}
\begin{split}
H=\tfrac 12A\tilde\eta A+\tfrac 12 L
+q+\tfrac14(\nabla^r\tilde\eta)(\Delta r)
.
\end{split}\label{eq:15091010b}
\end{align}}

\begin{cond}\label{cond:10.6.1.16.24}
  There exists a splitting by real-valued functions:
  \begin{align*}
  q=q_1+q_2;\quad q_1\in C^1(M)\cap L^\infty(M),\  q_2\in L^\infty(M),
  \end{align*}
such that for some $\rho',C>0$ the following bounds hold globally on $M$:
\begin{align}\label{eq:60}
\nabla ^rq_1\leq Cr^{-1-\rho'},\quad
|q_2|\le Cr^{-1-\rho'}.
\end{align}
\end{cond}
%We remark that in this  paper only derivatives of $r$ of order at most
%five are used quantitatively. 

Now let us explain  the self-adjoint realizations of $H$ and $H_0$.
Since $(M,g)$ can be incomplete, the operators $H$ and $H_0$ are not necessarily 
essentially self-adjoint on $C^\infty_{\mathrm{c}}(M)$.
We realize $H_0$ as a self-adjoint operator by imposing the 
Dirichlet boundary condition, i.e.\ 
$H_0$ is the unique self-adjoint
operator associated with the closure of the quadratic form
\begin{align*}
\langle H_0\rangle_\psi=\langle \psi, -\tfrac12 \Delta \psi\rangle,\quad \psi
\in C^\infty_{\mathrm{c}}(M).
\end{align*}
We denote the form closure and the self-adjoint realization by the same symbol $H_0$. 
Define the associated Sobolev spaces $\mathcal H^s$ by
\begin{align}\mathcal H^s=(H_0+1)^{-s/2}{\mathcal H}, \quad s\in\mathbb{R}.
\label{eq:13.9.5.1.2}
\end{align}
Then $H_0$ may be understood as a
closed quadratic form on $Q(H_0)=\mathcal H^1$.
Equivalently, $H_0$ makes sense also as a bounded operator 
$\mathcal H^1\to\mathcal H^{-1}$, whose action coincides with 
that for distributions.  
By the definition of the Friedrichs extension 
the self-adjoint realization of $H_0$ 
is the restriction of such distributional
$H_0\colon \mathcal H^1\to\mathcal H^{-1}$ to the domain:
\begin{align*} 
{\mathcal D}(H_0)=\{\psi\in\mathcal H^1\,|\, H_0\psi\in {\mathcal H}\}\subseteq {\mathcal H}.
\end{align*}
Since $V$ is bounded and self-adjoint by Conditions~\ref{cond:12.6.2.21.13}--\ref{cond:10.6.1.16.24}, 
we can realize the self-adjoint operator $H=H_0+V$ simply as
\begin{align*}
H=H_0+V,\quad \mathcal D(H)=\mathcal D(H_0).
\end{align*}

In contrast to (\ref{eq:13.9.5.1.2}) we introduce the Hilbert spaces $\mathcal H_s$
and $\mathcal H_{s\pm}$ with configuration weights:
\begin{align*}
\mathcal H_s=r^{-s}{\mathcal H}, \quad 
{\mathcal H_{s+}=\bigcup_{s'>s}\mathcal H_{s'},\quad
\mathcal H_{s-}=\bigcap_{s'<s}\mathcal H_{s'},\quad}
s\in\mathbb{R}.
\end{align*}
We consider the $r$-balls $B_R=\{ r(x)<R\}$ and  
the characteristic functions 
\begin{align}
\begin{split}
F_\nu=F(B_{R_{\nu+1}}\setminus B_{R_\nu}),\ R_\nu=2^{\nu},\ \nu\ge 0,
\end{split}\label{eq:13.9.12.17.34}
\end{align}
 where $F(\Omega)=1_\Omega$ is used for the  characteristic
 function of a subset $\Omega\subseteq M$. 
Define the associated Besov spaces $B$ and $B^*$ by
\begin{align}
\begin{split}
B&=\{\psi\in L^2_{\mathrm{loc}}(M)\,|\,\|\psi\|_{B}<\infty\},\quad 
\|\psi\|_{B}=\sum_{\nu=0}^\infty R_\nu^{1/2}
\|F_\nu\psi\|_{{\mathcal H}},\\
B^*&=\{\psi\in L^2_{\mathrm{loc}}(M)\,|\, \|\psi\|_{B^*}<\infty\},\quad 
\|\psi\|_{B^*}=\sup_{\nu\ge 0}R_\nu^{-1/2}\|F_\nu\psi\|_{{\mathcal H}},
\end{split}\label{eq:13.9.7.15.11}
\end{align}
respectively.
We also define $B^*_0$ to be the closure of $C^\infty_{\mathrm c}(M)$ in $B^*$.
Recall the nesting:
\begin{align*}
\mathcal H_{1/2+}\subsetneq B\subsetneq \mathcal H_{1/2}
\subsetneq\vH
\subsetneq\mathcal H_{-1/2}\subsetneq B^*_0\subsetneq B^*\subsetneq \mathcal H_{-1/2-}.
\end{align*}

Using the function $\chi\in C^\infty(\mathbb R)$ of \eqref{eq:14.1.7.23.24},
define $\chi_n,\bar\chi_n,\chi_{m,n}\in C^\infty(M)$ for 
 $n>m\ge 0$ by
\begin{align}
\chi_n=\chi(r/R_n),\quad \bar\chi_n=1-\chi_n,\quad
\chi_{m,n}=\bar\chi_m\chi_n.
\label{eq:11.7.11.5.14}
\end{align}  
 Let us introduce an auxiliary space:
\begin{align*}
\mathcal N=\{\psi\in L^2_{\mathrm{loc}}(M)\,|\, \chi_n\psi\in \mathcal
H^1\mbox{ for all }n\ge 0\}.
\end{align*}
This is a  space of  functions that intuitively satisfy the
  Dirichlet boundary condition, although 
possibly with infinite $\mathcal H^1$-norm on $M$.
 Note that under Conditions~\ref{cond:12.6.2.21.13}--\ref{cond:10.6.1.16.24} the manifold $M$
  may be, e.g.\ a half-space in the Euclidean space  (see \cite[Subsection~1.2]{IS2}), and there could be 
a `boundary' even for large $r$, which in our framework appears
`invisible' from inside $M$ (see  Remark
\ref{remark:extended-framewoM} for some elaboration). Recall a similar interpretation of the
space  $\mathcal H^1$.

\subsection{Review of results from \cite{IS2}}
Now we gather and review the main results from \cite{IS2}.

Our first theorem is Rellich's theorem, the absence of $B^*_0$-eigenfunctions 
with eigenvalues above a certain ``critical energy''
$\lambda_0\in\mathbb R$ defined by 
\begin{align}
\lambda_0
=\limsup_{r\to\infty}q_1
=
\lim_{R\to\infty}
\bigl(\sup\{q_1(x)\,|\, r(x)\ge R\}\bigr).
\label{eq:13.9.30.6.8}
\end{align}
For the Euclidean and the hyperbolic spaces and many other examples 
the critical energy $\lambda_0$ can be computed 
explicitly, and the essential
   spectrum is given by $\sigma_\ess(H)= [\lambda_0,\infty).$
  The latter is  usually seen in terms of Weyl sequences, see  \cite{K}.

\begin{thm}\label{thm:13.6.20.0.10}
Suppose Conditions~\ref{cond:12.6.2.21.13}--\ref{cond:10.6.1.16.24},
and let $\lambda>\lambda_0$.
If a function  $\phi\in  L^2_{\mathrm{loc}}(M)$ satisfies that
\begin{enumerate}
\item\label{item:13.7.29.0.27}
$(H-\lambda)\phi=0$ in the distributional sense, 
\item\label{item:13.7.29.0.26}
$\bar\chi_m\phi\in \mathcal N \cap B_0^*$ for all $m\ge 0$ large enough,
\end{enumerate}
then $\phi=0$ in $M$.
\end{thm}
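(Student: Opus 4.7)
The plan is to prove this Rellich-type uniqueness result by a positive commutator / Mourre-type argument adapted to the manifold geometry, followed by a bootstrap to super-polynomial decay and a final unique-continuation step.

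First I would establish a positive commutator estimate. The natural conjugate operator is the radial operator $A$ from \eqref{eq:13.9.23.2.24}, and the key formal identity is
\begin{align*}
 \i[H_0,A]=p_i^*(\nabla^2 r)^{ij}p_j+\tfrac{1}{4}\Delta^2 r,
\end{align*}
modulo commutator technicalities. Combining the geometric convexity bound \eqref{eq:13.9.5.3.30C} with the long-range bound $\nabla^r q_1\le Cr^{-1-\rho'}$, the commutator $\i[H-\lambda,A]$ gets localized to yield, after multiplying by a weight $f(r)$ with $f'\ge0$ of compact support in an annulus $\{R\le r\le 2R\}$, a pointwise lower bound
\begin{align*}
 \i[H,f(r)A]\ge \sigma f'(r)\,|\mathrm dr|^2\,\ell/r -C f'(r)r^{-\tau}-C r^{-1-\rho'},
\end{align*}
roughly speaking. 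The point is that in the region $r\gg 1$ where $q_1\le \lambda_0+o(1)$ and $\lambda>\lambda_0$, the principal symbol of the commutator is coercive, and the error terms are of strictly smaller order.

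Next I would apply this estimate to $\phi$. Since $\phi$ satisfies $(H-\lambda)\phi=0$ in the distributional sense and $\bar\chi_m\phi\in\mathcal N$, the form $\langle\phi,\i[H,f(r)A]\phi\rangle$ can be computed by the virial-type identity and equals essentially a boundary contribution plus integrated remainder. The hypothesis $\bar\chi_m\phi\in B_0^*$ forces the inner boundary terms as $R\to\infty$ to vanish in the appropriate averaged sense, because by definition of $B_0^*$ we have $R^{-1}\int_{R\le r\le 2R}|\phi|^2\to 0$. Equating the two sides, the coercive lower bound becomes an upper bound on $\|\phi\|_B$ in terms of lower-order remainders, which by a standard absorption/iteration argument yields $\phi\in B\subsetneq \mathcal H_{1/2}$, i.e.\ a genuine strengthening of the decay.

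The final step is to bootstrap. Replacing $f(r)$ by weighted versions $r^{2s}f$ (or eventually exponential weights $\e^{\kappa r}f$ à la Froese--Herbst), and repeating the positive commutator estimate, one shows inductively that $\phi\in\mathcal H_s$ for every $s$, and then that $\e^{\kappa r}\phi\in\mathcal H$ for every $\kappa>0$. A classical unique continuation theorem for Schr\"odinger operators with bounded potential on a connected Riemannian manifold then forces $\phi\equiv 0$ on all of $M$. The main obstacle is the first positive commutator step: the interplay of the non-unit gradient $|\mathrm dr|$, the $\tilde\eta$-corrected Hessian bound \eqref{eq:13.9.5.3.30C}, the long-range part $q_1$, and the bounded but merely measurable short-range part $q_2$ must all be handled in a single estimate, which is where the specific form of the effective potential $q$ in \eqref{eq:14.12.10.22.48} and the second bound in \eqref{eq:13.9.28.13.28} are essential; everything else is bootstrapping and standard unique continuation.
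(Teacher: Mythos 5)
Your outline points in the right direction: the theorem is indeed proved (in \cite{IS2}, from which this paper quotes it) by a positive-commutator estimate with conjugate operator built from $A$, followed by a decay bootstrap. But there are two concrete problems.

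First, the final step has a genuine logical gap. You pass from ``$\e^{\kappa r}\phi\in\mathcal H$ for every $\kappa>0$'' directly to ``unique continuation forces $\phi\equiv 0$.'' Unique continuation requires $\phi$ to vanish on an open set, and super-exponential decay alone does not supply this: functions like $\e^{-\e^r}$ decay faster than any $\e^{-\kappa r}$ without ever vanishing. The missing ingredient is an argument that exploits the eigenvalue equation together with the unbounded exponential decay to force $\phi=0$ on a neighbourhood of infinity -- either a Carleman estimate with exponential weight whose lower bound is incompatible with the upper bound coming from the decay, or a Froese--Herbst style contradiction showing that the set of admissible $\kappa$ cannot be all of $[0,\infty)$ unless $\phi$ vanishes for large $r$. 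Only after that can unique continuation on the (connected, smooth) interior of $M$ propagate the vanishing inward. Without this intermediate step the proof is not complete.

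Second, you assert that ``the second bound in \eqref{eq:13.9.28.13.28}'' is essential for the commutator step. This contradicts the paper, which states explicitly just after Condition~\ref{cond:12.6.2.21.13a} that ``The second bound of \eqref{eq:13.9.28.13.28} is not necessary for the results of \cite{IS2},'' and Theorem~\ref{thm:13.6.20.0.10} is one of the results of \cite{IS2}. Those spherical-derivative bounds feed into the stationary scattering construction, not into the Rellich/uniqueness argument. Relatedly, your jump from $\bar\chi_m\phi\in B_0^*$ to $\phi\in B$ in a single step is too fast -- the inclusions $B\subsetneq\mathcal H_{1/2}\subsetneq\vH\subsetneq\mathcal H_{-1/2}\subsetneq B_0^*$ show this is several rungs of decay at once, and the iteration would normally first produce $\phi\in\vH$ (or $\mathcal H_{s}$ for $s$ slightly negative) before any absorption argument can push further. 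These are repairable, but as written the proposal would not compile into a correct proof.
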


Next we discuss the limiting absorption principle
and the radiation condition
related to the resolvent $R(z)=(H-z)^{-1}.$ 
We state a locally uniform bound for the resolvent as
a map: $B\to B^*$.
For that we need a compactness condition.
\begin{cond}\label{cond:12.6.2.21.13b}
In addition to
Conditions~\ref{cond:12.6.2.21.13}--\ref{cond:10.6.1.16.24}, 
there exists an open subset   $\mathcal I\subseteq (\lambda_0,\infty)$
such that for any $n\ge 0$ and  compact interval $I\subseteq \mathcal I$ 
the mapping 
\begin{align*}
  \chi_nP_H(I)\colon \mathcal H\to\mathcal H
\end{align*}
is  compact, where $P_H(I)$ denotes the spectral projection onto $I$ for $H$. 
\end{cond} 
 %We remark that in this paper it is convenient  to assume 
 %that $\mathcal I$ is an open included interval as above  rather than an open
 %subset as in \cite{IS2}; there is no loss of generality. 

Due to Rellich's compact embedding theorem,
``boundedness'' of $r$-balls provides a criterion for Condition~\ref{cond:12.6.2.21.13b}:
If each $r$-ball $B_R$, $R\geq 1$, is 
isometric to a bounded subset of a complete manifold, 
Condition~\ref{cond:12.6.2.21.13b} is satisfied for $\mathcal I=(\lambda_0,\infty)$.

We fix any $\sigma\in (0,\sigma')$ and then 
large enough $C>0$ in agreement with \eqref{eq:13.9.5.3.30C}, 
and introduce the positive quadratic form
\begin{align*}
h:=\nabla^2r-\tfrac 12\tilde \eta^2(\nabla^r|\mathrm dr|^2)\d r
  \otimes\d r+2Cr^{-1-\tau}g\ge \tfrac12\sigma r^{-1}|\mathrm dr|^2 \ell+Cr^{-1-\tau}g.
\end{align*} 
For any  subset $I\subseteq \mathcal I$ we denote
\begin{align*}
I_\pm=\{z=\lambda\pm \mathrm i\Gamma\in \mathbb C\,|\,\lambda\in I,\ \Gamma\in (0,1)\},
\end{align*}
respectively. 
We also use the notation 
$\langle T\rangle_\phi=\langle\phi,T\phi\rangle$.

\begin{thm}\label{thm:12.7.2.7.9}
Suppose Condition~\ref{cond:12.6.2.21.13b}
and let $I\subseteq \mathcal I$ be a compact interval.
Then there exists $C>0$ such that 
for any $\phi=R(z)\psi$ with $z\in I_\pm$ and $\psi\in B$
\begin{align}
\|\phi\|_{B^*}+\|p^r\phi\|_{B^*}
+\langle p_i^*h^{ij}p_j\rangle_\phi^{1/2}
+\|H_0\phi\|_{B^*}
\le C\|\psi\|_B.
\label{eq:13.8.22.4.59c}
\end{align}
\end{thm}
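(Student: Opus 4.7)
My approach is a positive-commutator (Mourre) argument with conjugate operator $A=\operatorname{Re} p^r$, coupled with a contradiction/compactness step that invokes Rellich's Theorem~\ref{thm:13.6.20.0.10}. The plan is first to establish the $B\to B^*$ bound $\|\phi\|_{B^*}\le C\|\psi\|_B$, uniformly in $z\in I_\pm$, and then to bootstrap the radiation-condition terms $\|p^r\phi\|_{B^*}$, $\langle p_i^*h^{ij}p_j\rangle_\phi^{1/2}$ and $\|H_0\phi\|_{B^*}$ from it.

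The key input is a positive commutator estimate. Using the operator splitting \eqref{eq:15091010b}, compute $\i[H,A]$ on a suitable core. The leading part of $\i[H_0,A]$ is $p_i^*(\nabla^2 r)^{ij}p_j$ modulo symbol errors controlled by \eqref{eq:13.9.28.13.28} and the identity \eqref{eq:14.12.27.3.22}. Combining the convexity inequality \eqref{eq:13.9.5.3.30C} with Condition~\ref{cond:10.6.1.16.24} to absorb $-\nabla^r q_1-\nabla^r q_2$, one obtains, as a quadratic-form bound modulo relatively compact errors (using Condition~\ref{cond:12.6.2.21.13b} for spectral localization on any compact $I\subseteq\mathcal I$),
\begin{align*}
\i[H,A]\ge \sigma\, r^{-1}p_i^*h^{ij}p_j-Cr^{-1-\tau'}.
\end{align*}

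Now suppose the $B\to B^*$ estimate fails. Then there exist $z_n\in I_\pm$ and $\psi_n\in B$ with $\|\psi_n\|_B\to 0$ while $\phi_n=R(z_n)\psi_n$ satisfies $\|\phi_n\|_{B^*}=1$. Using $r$-localization by $\chi_n,\bar\chi_n$ from \eqref{eq:11.7.11.5.14} and the compactness guaranteed by Condition~\ref{cond:12.6.2.21.13b}, extract a subsequential weak-$\ast$ $B^*$ limit $\phi_\infty$ solving $(H-\lambda_\infty)\phi_\infty=0$ for some $\lambda_\infty\in I$. The Heisenberg identity
\begin{align*}
 2\,\Im\langle A\phi_n,(H-z_n)\phi_n\rangle-2(\Im z_n)\langle A\rangle_{\phi_n}=\langle\i[H,A]\rangle_{\phi_n},
\end{align*}
combined with the positive commutator above, shows that the $B^*$-mass of $\phi_n$ at infinity is controlled by $\|\psi_n\|_B$, so $\bar\chi_m\phi_\infty\in\vN\cap B_0^*$ for all large $m$. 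Theorem~\ref{thm:13.6.20.0.10} then forces $\phi_\infty=0$, and Condition~\ref{cond:12.6.2.21.13b} upgrades this to $\|\chi_m\phi_n\|_{\mathcal H}\to 0$, contradicting $\|\phi_n\|_{B^*}=1$. This proves the first term of \eqref{eq:13.8.22.4.59c}. Plugging the resulting bound back into the Heisenberg identity with a dyadic $\chi_{m,n}$ decomposition of $A$ then absorbs a multiple of $\|p^r\phi\|_{B^*}^2$ into the right-hand side and simultaneously yields $\langle p_i^*h^{ij}p_j\rangle_\phi^{1/2}\le C\|\psi\|_B$. The estimate $\|H_0\phi\|_{B^*}\le C\|\psi\|_B$ is then immediate from $H_0\phi=(z-V)\phi+\psi$, the boundedness of $V$, and the $B^*$-bound on $\phi$.

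The main obstacle lies in the contradiction step: arranging the numerous geometric error terms—from the non-normalized $|\d r|$, the cross terms $\ell^{\bullet i}\nabla_i\nabla^r|\d r|^2$ and $\ell^{\bullet i}\nabla_i\Delta r$ in \eqref{eq:13.9.28.13.28}, the cutoffs defining $\tilde\eta$, and the long-range radial derivative $\nabla^r q_1$—so that the subsequential limit actually satisfies the Dirichlet-type regularity $\bar\chi_m\phi_\infty\in\vN\cap B_0^*$ rather than only $\bar\chi_m\phi_\infty\in B^*$; this strict improvement is precisely what allows Rellich's theorem to be applied. The usual upgrade from weighted-$L^2$ limiting absorption to Besov bounds is further complicated here by the manifold setting and the absence of standard Fourier methods, and must be driven purely by the geometric convexity encoded in \eqref{eq:13.9.5.3.30C}.
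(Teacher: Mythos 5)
This theorem is quoted from \cite{IS2}; the present manuscript contains no proof, so I am comparing your sketch against the commutator framework that \cite{IS2} evidently uses (given the role of Conditions~\ref{cond:12.6.2.21.13a}, \ref{cond:12.6.2.21.13b}, the form $h$, and the splitting \eqref{eq:15091010b}). Your overall strategy (positive commutator with a radial conjugate, Rellich's theorem via weak extraction and compactness) is the right flavor, but two steps as written do not go through.

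First, the Heisenberg identity with the bare conjugate $A=\operatorname{Re}p^r$ cannot by itself deliver the Besov bounds $\|\phi\|_{B^*}$ and $\|p^r\phi\|_{B^*}$. What that identity gives, once the convexity \eqref{eq:13.9.5.3.30C} is invoked, is a \emph{weighted-$L^2$} estimate: roughly $\int r^{-1}|p'\phi|^2 + \int r^{-1-\tau}|p\phi|^2 \lesssim \|\psi\|_B\|\phi\|_{B^*} + \ldots$. Neither weight is strong enough to recover a uniform $B^*$ bound (with $r^{-1-\tau}$ one is off by a factor $R_\nu^{\tau}$ on dyadic annuli). In the standard approach one must run the commutator identity with a one-parameter family $A_R=\operatorname{Re}\bigl(f_R(r)p^r\bigr)$, where $f_R$ is tailored to the dyadic scales of $B^*$; the decisive positivity then comes from $f_R'\,|p^r\phi|^2$ rather than from $\nabla^2 r$, and this is precisely what produces the $\|p^r\phi\|_{B^*}$ term in \eqref{eq:13.8.22.4.59c} and couples it to $\|\phi\|_{B^*}$. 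Your ``dyadic $\chi_{m,n}$ decomposition of $A$'' gestures at this, but it must be the engine of the argument, set up before the contradiction step, not a bootstrap afterthought.

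Second, the asserted estimate $\i[H,A]\ge \sigma r^{-1}p_i^*h^{ij}p_j-Cr^{-1-\tau'}$ misrepresents the error structure. Expanding $\i[H_0,A]$ modulo the bounds \eqref{eq:13.9.28.13.28} and the identity \eqref{eq:14.12.27.3.22}, and applying \eqref{eq:13.9.5.3.30C}, leaves error terms of the form $r^{-1-\tau}p_i^*g^{ij}p_j\sim r^{-1-\tau}H_0$, which are second-order and not a scalar $r^{-1-\tau'}$. These must be absorbed using the equation $H_0\phi=(z-V)\phi+\psi$ (an elliptic/energy estimate on annuli), not dismissed as ``relatively compact modulo spectral localization''. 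Condition~\ref{cond:12.6.2.21.13b} does not produce this absorption; its compactness is used only in your weak-$*$ extraction step and in upgrading $\phi_\infty=0$ to local norm decay. Until the absorption mechanism is spelled out, the claim that the $B^*$ mass of $\phi_n$ concentrates on a fixed compact set (so that $\bar\chi_m\phi_\infty\in\vN\cap B_0^*$ and the contradiction with $\|\phi_n\|_{B^*}=1$ closes) remains unsubstantiated; you flag this as the ``main obstacle'', and indeed it is exactly the place your outline has a gap.
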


In our theory the Besov boundedness \eqref{eq:13.8.22.4.59c} 
does not immediately imply the limiting absorption principle,
and for the latter  we need also radiation condition bounds implied by
minor 
additional regularity conditions.

\begin{cond}\label{cond:12.6.2.21.13bbb}
In addition to Condition~\ref{cond:12.6.2.21.13b} 
there exist splittings $q_1=q_{11}+q_{12}$ and 
$q_2=q_{21}+q_{22}$ 
by real-valued functions
\begin{align*}
q_{11}\in C^2(M)\cap L^\infty(M),\quad
q_{12}, 
q_{21}\in C^1(M)\cap L^\infty(M),\quad
q_{22}\in L^\infty(M)
\end{align*}
and constants $\rho,C>0$
such that for $k=0,1$
\begin{align*}
%  \label{eq:pot}
|\nabla^rq_{11}|&\le Cr^{-(1+\rho/2)/2},&
|\ell^{\bullet i}\nabla_i q_{11}|&\le Cr^{-1-\rho/2},&
|\mathrm d\nabla^rq_{11}|&\le Cr^{-1-\rho/2},\\
|\mathrm dq_{12}|&\le Cr^{-1-\rho/2},&
|(\nabla^r)^k q_{21}|&\le Cr^{-k-\rho},&
q_{21}\nabla^r q_{11}&\leq Cr^{-1-\rho},\\
|q_{22}|&\le Cr^{-1-\rho/2}.&&
\end{align*}
\end{cond}

Our  radiation condition bounds are
stated in terms of the  distributional radial differential operator 
$A$ defined in \eqref{eq:13.9.23.2.24}
and an asymptotic complex phase $a$ given below.
Pick a
  smooth decreasing function $r_\lambda\geq 2r_0$ of
  $\lambda>\lambda_0$ such that
  \begin{align}
    \lambda+\lambda_0-2q_1\geq 0\mfor r\geq r_\lambda/2,
  \label{eq:14.6.29.23.46}
  \end{align}
  and that  $r_\lambda=2r_0$ for all $\lambda$ large enough.
Then we set 
\begin{align*}
\eta_\lambda=1-\chi(2r/r_\lambda),
\end{align*}
and for $z=\lambda \pm \i \Gamma \in\mathcal I\cup\mathcal I_\pm$
\begin{subequations}
 \begin{align}
b&=\eta_\lambda|\mathrm dr|\sqrt{2(z-q_1)},\qquad\quad\ \  
\tilde b=\tilde\eta b,\label{eq:13.9.5.7.2300}\\
a&=b\pm\tfrac14\eta_\lambda (p^rq_{11})\big/(z-q_1),\label{eq:13.9.5.7.23}
\end{align} 
\end{subequations}
respectively, where the branch of square root is chosen such that 
$\mathop{\mathrm{Re}}\sqrt w>0$ for $w\in \mathbb C\setminus
(-\infty,0]$. Note that for $z\in \mathcal I$ there are two values of
$a$  which could be denoted $a_\pm$. For convenience we prefer to use
the shorter notation. 
Note also that the phase $a$ of \eqref{eq:13.9.5.7.23} 
is an approximate solution to the radial Riccati equation
\begin{align}
\pm p^ra+a^2-2|\mathrm dr|^2(z-q_1)=0
\label{eq:15.3.11.19.35}
\end{align} 
in the sense that it makes the quantity on the left-hand side of
\eqref{eq:15.3.11.19.35} small for large $r\ge 1$.  
The quantity $b$ of \eqref{eq:13.9.5.7.2300} alone already gives an
approximate solution to the same equation, however with the second
term of \eqref{eq:13.9.5.7.23} a better approximation is obtained. 
%The
%definition of $a$  in  \eqref{eq:13.9.5.7.23} differs only slightly from the one used in
%\cite{IS2}, and the result Theorem \ref{cond:12.6.2.21.13bbb} below is a
%direct consequence of the analogous result in \cite{IS2}.  
Set
\begin{align}
\beta_c
=\tfrac12\min\{\sigma,\tau,\rho\}.\label{eq:10c}
\end{align} Here and henceforth we consider $\sigma \in (0,\sigma')$
as a fixed parameter. 
\begin{thm}
  \label{prop:radiation-conditions} 
  Suppose Condition~\ref{cond:12.6.2.21.13bbb}, and let $I\subseteq \mathcal I$
be a compact interval.
  Then for all  $\beta\in [0,\beta_c)$
  there exists $C>0$ such that 
  for any $\phi=R(z)\psi$ with $\psi\in r^{-\beta}B$ and $z\in I_\pm$
\begin{align}
\|r^\beta(A\mp a)\phi\|_{B^*}
+\langle p_i^*r^{2\beta} h^{ij}p_j\rangle_{\phi}^{1/2}
&\leq C\|r^\beta\psi\|_B,\label{eq:14cccCff}
\end{align} 
respectively.
\end{thm}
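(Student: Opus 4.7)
My plan is to establish \eqref{eq:14cccCff} by a positive-commutator bootstrap that raises the exponent $\beta$ in small steps, starting from the base case $\beta=0$ which reduces to Theorem \ref{thm:12.7.2.7.9}. Indeed, the phase $a=\eta_\lambda|\mathrm dr|\sqrt{2(z-q_1)}\pm\tfrac14\eta_\lambda(p^rq_{11})/(z-q_1)$ is uniformly bounded in $z\in\mathcal I\cup\mathcal I_\pm$ (the inequality \eqref{eq:14.6.29.23.46} keeps the argument of the square root out of $(-\infty,0]$ on $\supp\eta_\lambda$), while $A-p^r$ is bounded by Condition~\ref{cond:12.6.2.21.13a}. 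Hence $\|(A\mp a)\phi\|_{B^*}\le \|p^r\phi\|_{B^*}+C\|\phi\|_{B^*}$, and the spherical term in \eqref{eq:14cccCff} at $\beta=0$ is exactly the one in \eqref{eq:13.8.22.4.59c}.

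For the induction step, fix $\beta'\in[0,\beta_c)$, assume \eqref{eq:14cccCff} for all exponents $\le\beta'$, and derive it at $\beta=\beta'+\delta$ with $\delta>0$ small and independent of $\beta'$. I would use a propagation observable of the form
\[
\Theta_\beta=(A\mp a)^*\,\bar\chi_m^2 r^{2\beta}\eta_\lambda^2\,(A\mp a),
\]
with $m$ large enough that $\eta_\lambda\equiv 1$ on $\supp\bar\chi_m$; note that $\Theta_\beta^*=\Theta_\beta$ since the weight is real. From $(H-z)\phi=\psi$ one has the identity $2\,\Im\langle\psi,\Theta_\beta\phi\rangle=\langle\i[H,\Theta_\beta]\rangle_\phi+2\Gamma\langle\Theta_\beta\rangle_\phi$. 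Expanding $H$ via \eqref{eq:15091010b} and using the approximate radial Riccati equation \eqref{eq:15.3.11.19.35}, the commutator $\i[H,\Theta_\beta]$ is, up to error terms, a sum of two positive quadratic forms: (i) a radial contribution of size $2\beta\,(A\mp a)^*r^{2\beta-1}|\mathrm dr|^2(A\mp a)$, coming from differentiating $r^{2\beta}$ against the kinetic term $\tfrac12 A\tilde\eta A$; and (ii) a spherical contribution bounded below by a positive multiple of $p_i^*r^{2\beta}h^{ij}p_j$, via the Hessian estimate \eqref{eq:13.9.5.3.30C} applied to the commutator with $L$. Cauchy--Schwarz in Besov duality bounds the left-hand side by $\epsilon\bigl(\|r^\beta(A\mp a)\phi\|_{B^*}^2+\langle p_i^*r^{2\beta}h^{ij}p_j\rangle_\phi\bigr)+C_\epsilon\|r^\beta\psi\|_B^2$, which for small $\epsilon$ absorbs into the positive part.

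The main obstacle is controlling the commutator error remainders. They come from: commutators with the cutoffs $\eta_\lambda,\bar\chi_m$, localized to bounded regions and handled by the base case; $\ell$-derivatives of $|\mathrm dr|^2$ and $\Delta r$, bounded by $Cr^{-1-\tau/2}$ via \eqref{eq:13.9.28.13.28}; angular derivatives of $a$ and $q_{11}$, bounded by $Cr^{-1-\rho/2}$ via Condition~\ref{cond:12.6.2.21.13bbb}; the $q_{12},q_{21},q_{22}$ contributions of order $r^{-1-\rho/2}$; and the Riccati defect $\pm p^r a+a^2-2|\mathrm dr|^2(z-q_1)$, which by the construction of $a$ is of order $r^{-1-\min\{\tau,\rho\}}$. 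The threshold $\beta_c=\tfrac12\min\{\sigma,\tau,\rho\}$ is exactly what makes every such error splittable, via Cauchy--Schwarz, as a small multiple of the desired positivity plus a quadratic form in $(A\mp a)\phi$ or $p_j\phi$ weighted by $r^{2\tilde\beta}$ with $\tilde\beta\le\beta'$; the inductive hypothesis dominates the latter by $C\|r^{\tilde\beta}\psi\|_B^2\le C\|r^\beta\psi\|_B^2$. Finitely many iterations of $\beta'\mapsto\beta'+\delta$ reach any $\beta<\beta_c$, and the $\Gamma$-term contributes with a sign consistent with the $\pm$ convention and thus does not obstruct the argument.
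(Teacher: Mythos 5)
This theorem is not proved in the paper you are reading: it appears in the subsection entitled ``Review of results from \cite{IS2}'' and is quoted without proof from that companion paper, so there is nothing in this paper to compare your sketch against.

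Taken on its own, your outline has the right shape for the positive-commutator argument that radiation-condition bounds of this type rest on: a propagation observable built on $A\mp a$, radial positivity $\propto\beta r^{2\beta-1}$ from commuting the radial kinetic energy with the weight, spherical positivity via the Hessian estimate \eqref{eq:13.9.5.3.30C}, and absorption of errors with $\beta_c$ emerging from the slowest decay rate. Two details deserve a closer look. First, your accounting of the Riccati defect as $O(r^{-1-\min\{\tau,\rho\}})$ overstates the decay: the correction in \eqref{eq:13.9.5.7.23} cancels only the $p^rq_{11}$ contribution to $\pm p^rb$, and the residual is driven by $p^rq_{12}$, $(p^rq_{11})^2$, $\d\nabla^rq_{11}$ and $\nabla|\mathrm dr|^2$, all of size $O(r^{-1-\min\{\tau,\rho\}/2})$ under Condition~\ref{cond:12.6.2.21.13bbb}. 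It is this halved exponent which, after splitting by Cauchy--Schwarz against the $2\beta r^{2\beta-1}$ positivity, pins $\beta$ below $\tfrac12\min\{\tau,\rho\}$, consistent with \eqref{eq:10c}. Second, the $\Gamma$-term is not free even with your positive quadratic-form observable: $\Gamma\langle\Theta_\beta\rangle_\phi$ carries the weight $r^{2\beta}$, one power of $r$ heavier than the $r^{2\beta-1}$ positivity, so it cannot be absorbed pointwise and must be controlled by a separate Green's-identity estimate uniformly in $\Gamma\in(0,1)$. Neither issue is fatal to the scheme, but both would need to be filled in to turn the sketch into a proof.
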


The limiting absorption principle reads.

\begin{corollary}\label{cor:12.7.2.7.9b}
Suppose Condition~\ref{cond:12.6.2.21.13bbb},
and let $I\subseteq \mathcal I$ be a compact interval.
For any $s>1/2$
and $\epsilon\in (0,\min\{(2s-1)/(2s+1),\beta_c/(\beta_c+1)\})$ 
there exists $C>0$ such that for $k=0,1$ and any $z,z'\in I_+$ or $z,z'\in I_-$ 
\begin{align}
\|p^k R(z)-p^k R(z')\|_{\mathcal B(\mathcal H_s,\mathcal H_{-s})}\le C|z-z'|^\epsilon.
\label{eq:14.12.30.21.52}
\end{align}
In particular, the operators $p^k R(z)$, $k=0,1$,  attain
uniform limits as $I_\pm \ni z \to \lambda \in I$ in the norm topology of 
${\mathcal B}(\mathcal H_s,\mathcal H_{-s})$, say denoted  
\begin{align}
p^k R(\lambda\pm\mathrm i0):=\lim_{I_\pm \ni z\to \lambda}p^k R(z), \quad \lambda\in I,
\label{eq:14.12.30.21.53}
\end{align}
respectively. 
These limits $p^k R(\lambda\pm\mathrm i0)\in{\mathcal B}(B,B^*)$,
and  $R(\lambda\pm\mathrm i0):B\to \vN\cap B^*$.
\end{corollary}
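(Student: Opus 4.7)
The approach combines the uniform bounds of Theorem~\ref{thm:12.7.2.7.9} with the radiation condition bounds of Theorem~\ref{prop:radiation-conditions} and Rellich's uniqueness Theorem~\ref{thm:13.6.20.0.10}, proceeding in three stages: transfer Besov bounds to weighted $L^2$ bounds, establish pointwise existence of boundary values by a compactness--uniqueness argument, and then promote this to Hölder continuity in the norm topology by interpolation.

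\emph{Stage 1 (uniform bounds).} The nesting recalled just before the corollary gives continuous embeddings $\mathcal H_s\hookrightarrow B$ and $B^*\hookrightarrow \mathcal H_{-s}$ for any $s>1/2$, so Theorem~\ref{thm:12.7.2.7.9} immediately yields
\[
\|R(z)\|_{\mathcal B(\mathcal H_s,\mathcal H_{-s})}+\|p^rR(z)\|_{\mathcal B(\mathcal H_s,\mathcal H_{-s})}+\|H_0R(z)\|_{\mathcal B(\mathcal H_s,\mathcal H_{-s})}\le C,\qquad z\in I_\pm.
\]
The full momentum bound $\|pR(z)\|_{\mathcal B(\mathcal H_s,\mathcal H_{-s})}\le C$, needed for the case $k=1$, is obtained by complementing the $p^r$-bound with the tangential quadratic form estimate $\langle p_i^*h^{ij}p_j\rangle_{R(z)\psi}^{1/2}\le C\|\psi\|_B$ of Theorem~\ref{thm:12.7.2.7.9}, exploiting the ellipticity $h\ge Cr^{-1-\tau}g$ on the ends together with interior elliptic regularity on bounded sets.

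\emph{Stage 2 (existence of boundary values).} For $\psi\in B$ and a sequence $z_n\to\lambda$ in $I_\pm$, Banach--Alaoglu applied to the $B^*$-bound of Theorem~\ref{thm:12.7.2.7.9} yields a subsequential weak${}^*$-limit $R(z_n)\psi\rightharpoonup^*\phi\in B^*$; such $\phi$ satisfies $(H-\lambda)\phi=\psi$ distributionally and, by passing Theorem~\ref{prop:radiation-conditions} to the weak${}^*$-limit, also $r^\beta(A\mp a_\lambda)\phi\in B^*$ for $\beta<\beta_c$. Uniqueness of this limit follows from Rellich: the difference $\phi_1-\phi_2$ of any two candidates solves $(H-\lambda)(\phi_1-\phi_2)=0$ and inherits the radiation condition, which I would show forces it into $B_0^*\cap\mathcal N$, whence Theorem~\ref{thm:13.6.20.0.10} makes it vanish. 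The inclusion $R(\lambda\pm\mathrm i 0)\psi\in\mathcal N$ is obtained by passing $\chi_nR(z)\psi\in\mathcal H^1$ to weak limits for each fixed $n$ using the Stage~1 bound.

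\emph{Stage 3 (Hölder continuity in norm).} Starting from the resolvent identity $R(z)-R(z')=(z-z')R(z)R(z')$, I would derive a Lipschitz-type bound
\[
\|p^kR(z)-p^kR(z')\|_{\mathcal B(\mathcal H_{s'},\mathcal H_{-s'})}\le C|z-z'|
\]
in a more heavily weighted space $\mathcal H_{s'}$, $s'>s$, where the improved decay $r^\beta(A\mp a)R(z)\psi\in B^*$ for $\beta<\beta_c$ from Theorem~\ref{prop:radiation-conditions} is what renders the composition $R(z)R(z')$ bounded between such spaces. Real interpolation between this Lipschitz estimate at weight $s'$ and the uniform Stage~1 bound at weight $s$ then gives \eqref{eq:14.12.30.21.52} with exponent $\epsilon<\min\{(2s-1)/(2s+1),\beta_c/(\beta_c+1)\}$: the first factor reflects the $L^2$-weight gap $s-1/2$ and the second the radiation parameter $\beta_c$. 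The existence of the limits \eqref{eq:14.12.30.21.53} in norm then follows by the Cauchy criterion, and their membership in $\mathcal B(B,\mathcal N\cap B^*)$ from Stage~2.

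The main obstacle is Stage~3: the sharp exponent requires a phase-sensitive analysis of $R(z)R(z')$ that balances weight growth against the oscillation encoded in the asymptotic phase $a$, so that the two interpolation ceilings emerge simultaneously rather than one dominating. The uniqueness step in Stage~2 is also delicate, since the full strength of the radiation condition is needed to force the difference of subsequential limits into $B_0^*$ before Rellich can be invoked.
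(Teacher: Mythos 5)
Your Stage 3 has a fundamental gap. The Lipschitz-type bound $\|R(z)-R(z')\|_{\mathcal B(\mathcal H_{s'},\mathcal H_{-s'})}\le C|z-z'|$ does not hold for any finite weight $s'$, because the composition $R(z)R(z')$ is never uniformly bounded $\mathcal H_{s'}\to\mathcal H_{-s'}$ as $z,z'\to I$: the Besov estimate of Theorem~\ref{thm:12.7.2.7.9} is sharp, so $R(z')\psi$ decays only at the $B^*$-rate (roughly $r^{-1/2}$) no matter how rapidly $\psi$ decays, and applying the second resolvent then asks you to pair two $B^*$-functions, which is not a finite bilinear form. You appeal to the radiation condition to ``render the composition bounded,'' but Theorem~\ref{prop:radiation-conditions} only gives extra decay of $(A\mp a)R(z)\psi$, not of $R(z)\psi$ itself, so it cannot repair the composition. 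The very form of the exponent $\min\{(2s-1)/(2s+1),\beta_c/(\beta_c+1)\}$ — both branches are of the shape $\alpha/(\alpha+2)$ with $\alpha=2s-1$, resp.\ $\alpha=2\beta$, $\beta<\beta_c$ — is the signature of a cutoff-and-optimize argument rather than interpolation: one splits at scale $R$, estimates the local piece via the resolvent identity (each Besov bound costs a factor $R^{1/2}$ to localize, giving $R^2|z-z'|$), estimates the exterior piece using either the weight surplus $s-\tfrac12$ or the outgoing/incoming phase cancellation (giving $R^{-(2s-1)}$ or $R^{-2\beta}$), and then optimizes in $R$. The extra decay must be manufactured by restricting to the exterior region, not by strengthening the incoming weight.

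There is also a weaker but real issue in Stage 2. To invoke Theorem~\ref{thm:13.6.20.0.10} you need the difference $\phi_1-\phi_2$ of two subsequential limits to lie in $B_0^*$, but a priori it is only in $B^*$. That a homogeneous solution carrying the (one-sided) radiation condition is automatically in $B_0^*$ is precisely the hard part of Corollary~\ref{cor:13.9.9.8.23}, which however presupposes the existence of $R(\lambda\pm\i 0)$ and so cannot be cited here; it must be reproven from scratch (via a Green's-identity argument using $h$ and the phase $a$). Your phrase ``which I would show forces it into $B_0^*$'' treats this as a footnote when it is in fact one of the principal steps. Stage 1 is essentially sound, modulo the observation that on the ends the $r^{-1}|\d r|^2\ell$-part of $h$ is what actually controls $p'\phi$ with a $B^*$-type weight, while the interior requires the $H_0\phi\in B^*$ bound plus local elliptic regularity.
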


Given  the limiting resolvents $R(\lambda\pm\mathrm i0)$
  the radiation condition bounds for real spectral parameters
follow directly from Theorem~\ref{prop:radiation-conditions}.

\begin{corollary}
  \label{cor:radiation-conditions}
  Suppose Condition~\ref{cond:12.6.2.21.13bbb},  and let 
  $I\subseteq \mathcal I$ be a compact interval.
  Then for all $\beta \in [0,\beta_c)$
  there exists $C>0$ such that 
  for any $\phi=R(\lambda\pm\mathrm i0)\psi$ with $\psi\in r^{-\beta}B$ and 
  $\lambda\in I$ 
\begin{align}
\|r^\beta(A\mp a)\phi\|_{B^*}
+\langle p_i^*r^{2\beta} h^{ij}p_j\rangle_{\phi}^{1/2}
&\leq C\|r^\beta\psi\|_B,\label{eq:14cccCa} 
 \end{align} 
respectively.
\end{corollary}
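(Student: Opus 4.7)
The strategy is to pass Theorem~\ref{prop:radiation-conditions} to the real axis by letting $z\to\lambda\in I$ along $I_\pm$, leveraging the limiting absorption principle of Corollary~\ref{cor:12.7.2.7.9b}.

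Fix $\lambda \in I$, $\beta \in [0,\beta_c)$ and $\psi \in r^{-\beta}B$. Choose a sequence $z_n = \lambda \pm \mathrm{i}\Gamma_n$ with $\Gamma_n \downarrow 0$, and set $\phi_n = R(z_n)\psi$ and $\phi = R(\lambda \pm \mathrm{i}0)\psi$. By Theorem~\ref{prop:radiation-conditions} the bound \eqref{eq:14cccCff} holds with $\phi$ replaced by $\phi_n$, uniformly in $n$. By Corollary~\ref{cor:12.7.2.7.9b} with $k=0,1$ and $s>1/2$ chosen large enough, we have $\phi_n\to \phi$ and $p\phi_n\to p\phi$ in $\mathcal H_{-s}$, hence in $L^2_{\mathrm{loc}}(M)$ and distributionally. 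On $\supp \eta_\lambda$ the condition \eqref{eq:14.6.29.23.46} gives $\lambda - q_1 \ge (\lambda - \lambda_0)/2 > 0$, so $2(z-q_1)$ stays uniformly away from $(-\infty,0]$; consequently the phases $a_{z_n}$ of \eqref{eq:13.9.5.7.23} are uniformly bounded and converge to $a_\lambda$ locally uniformly on $M$.

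For the first term, $\{r^\beta(A \mp a_{z_n})\phi_n\}$ is uniformly bounded in $B^* = B'$, so Banach--Alaoglu yields a weak-star accumulation point $\Psi \in B^*$. Testing against an arbitrary $\phi_0 \in C_{\mathrm c}^\infty(M) \subseteq B$ and combining the $L^2_{\mathrm{loc}}$-convergence of $\phi_n$ and $p\phi_n$ with the locally uniform convergence of $a_{z_n}$, I identify $\Psi = r^\beta(A \mp a_\lambda)\phi$, so the whole sequence converges weak-star. Weak-star lower semicontinuity of $\|\cdot\|_{B^*}$ then gives $\|r^\beta(A \mp a_\lambda)\phi\|_{B^*}\le C\|r^\beta\psi\|_B$. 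For the quadratic form I would localize by $\chi_m$ from \eqref{eq:11.7.11.5.14}: the nonnegative form $\chi_m^2 p_i^* r^{2\beta} h^{ij} p_j$ has compactly supported coefficients, so strong $L^2_{\mathrm{loc}}$-convergence $p\phi_n \to p\phi$ transfers the bound to give
$$\langle p_i^* \chi_m^2 r^{2\beta} h^{ij} p_j\rangle_\phi = \lim_n \langle p_i^* \chi_m^2 r^{2\beta} h^{ij} p_j\rangle_{\phi_n} \le C^2\|r^\beta\psi\|_B^2,$$
and monotone convergence as $m \to \infty$ finishes the estimate.

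The main obstacle — really only a technical hurdle — is the identification of the weak-star limit with $r^\beta(A \mp a_\lambda)\phi$. This reduces to the continuous $z$-dependence of the phase $a_z$ up to the real axis, guaranteed by \eqref{eq:14.6.29.23.46} keeping $2(z-q_1)$ off the branch cut, and to the distributional convergence of $\phi_n$ and $p\phi_n$, provided by Corollary~\ref{cor:12.7.2.7.9b}.
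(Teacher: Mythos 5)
Your argument is the standard limiting procedure that the paper evidently has in mind when it says the corollary follows ``directly'' from Theorem~\ref{prop:radiation-conditions}: use the uniform bounds on $I_\pm$, pass to the limit via the convergence of Corollary~\ref{cor:12.7.2.7.9b}, exploit local uniform convergence of the phase $a_z$ (which indeed stays well defined on $\supp\eta_\lambda$ by \eqref{eq:14.6.29.23.46}), and conclude by weak-$*$ lower semicontinuity for the $B^*$-norm together with cut-off plus monotone convergence for the quadratic form. The only point worth tightening is that for $\beta=0$ one only has $\psi\in B\subseteq\mathcal H_{1/2}$, which is not in any $\mathcal H_s$ with $s>1/2$, so the stated $\mathcal H_{-s}$-convergence of $\phi_n,p\phi_n$ is not literally given by Corollary~\ref{cor:12.7.2.7.9b}; one recovers the needed $L^2_{\mathrm{loc}}$-convergence by an approximation $\psi_j=\chi_j\psi\in B\cap\mathcal H_s$ combined with the uniform $\mathcal B(B,B^*)$-bound from Theorem~\ref{thm:12.7.2.7.9} (a routine three-$\epsilon$ argument). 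With this small caveat the proof is correct and matches the paper's approach.
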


For the Euclidean and the hyperbolic spaces without potential $V$ we
can assume  
$\beta_c \ge 1-\epsilon$ for any (small) $\epsilon >0$
  (in fact since \eqref{eq:13.9.5.3.30C} is fulfilled with
    $\sigma=\sigma'$ in these cases we can assume  $\beta_c \ge 1$).

As another application of the radiation condition bounds 
we have  characterized  the limiting resolvents $R(\lambda\pm\mathrm i0)$. 
For the Euclidean space such characterization
is usually referred to as the Sommerfeld uniqueness result. 
\begin{corollary}\label{cor:13.9.9.8.23}
  Suppose Condition \ref{cond:12.6.2.21.13bbb}, and let
  $\lambda\in\mathcal I$, $\phi\in L^2_{\mathrm{loc}}(M)$ and
  $\psi\in r^{-\beta}B$ with  $\beta\in [0,\beta_c)$.
Then 
$\phi=R(\lambda\pm\mathrm i0)\psi$ holds if and only if
both of the following conditions hold:
\begin{enumerate}[(i)]
\item\label{item:13.7.29.0.29}
$(H-\lambda)\phi=\psi$ in the distributional sense.
\item\label{item:13.7.29.0.28}
$\phi\in \mathcal N\cap
r^\beta B^*$  and $(A\mp a)\phi\in r^{-\beta}B^*_0$.
\end{enumerate}
\end{corollary}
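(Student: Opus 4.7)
The statement has a necessity and a sufficiency direction.

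\emph{Necessity.} Suppose $\phi = R(\lambda\pm\mathrm i 0)\psi$. Identity (\ref{item:13.7.29.0.29}) follows by passing to the limit $\mathcal I_\pm \ni z\to \lambda$ in the resolvent equation $(H-z)R(z)\psi=\psi$, using the convergence \eqref{eq:14.12.30.21.53} together with the fact that $H\colon \mathcal H^1\to\mathcal H^{-1}$ is bounded. The membership $\phi\in\mathcal N\cap r^\beta B^*$ in (\ref{item:13.7.29.0.28}) is contained in Corollary \ref{cor:12.7.2.7.9b} (applied at the weighted level $r^{-\beta}B\to r^\beta B^*$), and Corollary \ref{cor:radiation-conditions} yields $(A\mp a)\phi\in r^{-\beta}B^*$. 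To upgrade the latter to $r^{-\beta}B^*_0$, I would approximate $\psi$ by a sequence $\psi_n\in C^\infty_{\mathrm c}(M)$ in $r^{-\beta}B$; for each $\psi_n$ and each $z\in\mathcal I_\pm$, $R(z)\psi_n\in\mathcal D(H)$ with $(A\mp a)R(z)\psi_n\in \mathcal H\subset B^*_0$, and then the uniform bound \eqref{eq:14cccCff} together with the closedness of $r^{-\beta}B^*_0$ inside $r^{-\beta}B^*$ transfers this membership to the double limit $z\to\lambda$, $n\to\infty$.

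\emph{Sufficiency.} Suppose $\phi$ satisfies (\ref{item:13.7.29.0.29})--(\ref{item:13.7.29.0.28}). Set $\tilde\phi=R(\lambda\pm\mathrm i 0)\psi$, which satisfies the same conditions by necessity, and let $\eta=\phi-\tilde\phi$. Then $(H-\lambda)\eta=0$ distributionally, $\eta\in\mathcal N\cap r^\beta B^*$, and $(A\mp a)\eta\in r^{-\beta}B^*_0$. The goal is to conclude $\eta=0$ via Rellich's theorem (Theorem \ref{thm:13.6.20.0.10}), which requires $\bar\chi_m\eta\in\mathcal N\cap B^*_0$ for some large $m$; the $\mathcal N$-part is immediate, so the core remaining task is to upgrade the a priori decay from $r^\beta B^*$ down to $B^*_0$.

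The strategy for this upgrade is a weighted commutator identity exploiting that $a$ is an approximate Riccati solution, cf.\ \eqref{eq:15.3.11.19.35}, and the expansion \eqref{eq:15091010b} of $H$. Substituting $A=\pm a+(A\mp a)$ into the form $\langle \eta,(H-\lambda)\eta\rangle$ in a cutoff regularised sense, the Riccati identity cancels the leading $A^2$- and $|\mathrm dr|^2(\lambda-q_1)$-terms; the positive remainder is the spherical energy $\langle L\rangle_\eta$ together with the curvature term controlled by \eqref{eq:13.9.5.3.30C}, while the errors generated from Conditions \ref{cond:12.6.2.21.13a} and \ref{cond:12.6.2.21.13bbb} plus the inhomogeneous contribution $\|(A\mp a)\eta\|_{r^{-\beta}B^*_0}$ can be made small on tails $r\ge R$ as $R\to\infty$. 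This produces an estimate that forces $\bar\chi_m\eta$ into $B^*_0$ for large $m$, at which point Theorem \ref{thm:13.6.20.0.10} gives $\eta=0$ and hence $\phi=\tilde\phi=R(\lambda\pm\mathrm i 0)\psi$.

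\emph{Main obstacle.} The delicate step is the commutator identity realising the Riccati cancellation in this geometric long-range setting: $|\mathrm dr|$ is not constant, $\nabla^2 r$ is positive only spherically via \eqref{eq:13.9.5.3.30C}, and the splitting $q=q_1+q_2$ only has the limited regularity supplied by Condition \ref{cond:12.6.2.21.13bbb}. Converting the $r^{-\beta}B^*_0$-input for $(A\mp a)\eta$ into a genuine (unweighted) $B^*_0$-output for $\eta$, rather than a merely weighted variant, is the heart of the argument and closely parallels the more technical passages in the proof of Theorem \ref{prop:radiation-conditions} developed in \cite{IS2}.
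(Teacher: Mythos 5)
This corollary appears in the paper only in review form: Subsection 1.4 explicitly quotes it as one of ``the main results from \cite{IS2}'', and no proof is supplied here. So there is no in-paper proof to compare against; I can only assess your proposal on its own terms and against the standard Sommerfeld-uniqueness scheme that \cite{IS2} follows, which your plan matches at a high level (necessity via the radiation-condition bounds plus a $B^*_0$-upgrade, sufficiency by reducing the difference to Rellich's Theorem \ref{thm:13.6.20.0.10}).

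Two concrete gaps remain in the way you propose to execute that scheme. First, in the necessity direction, your upgrade of $(A\mp a)\phi$ from $r^{-\beta}B^*$ to $r^{-\beta}B^*_0$ by approximating $\psi$ and taking the double limit cannot be concluded from closedness of $B^*_0$ in $B^*$: the boundary convergence $R(z)\to R(\lambda\pm\i 0)$ supplied by Corollary \ref{cor:12.7.2.7.9b} is in $\mathcal B(\mathcal H_s,\mathcal H_{-s})$, not in the $r^\beta B^*$ topology, so membership in $r^{-\beta}B^*_0$ does not pass to the limit $z\to\lambda$ this way. What is actually needed is a quantitative vanishing estimate for the annular $B^*$-averages of $r^\beta(A\mp a)R(z)\psi$ that is uniform in $\Gamma$ and decays as the annulus radius $R\to\infty$; that comes from an explicit integral identity (a Green/flux computation), not from a soft density argument. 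Also, your invocation of Corollary \ref{cor:12.7.2.7.9b} ``at the weighted level'' is a non sequitur: $\phi\in r^\beta B^*$ for $\beta\ge 0$ is just the unweighted $\phi\in B^*$ plus the trivial inclusion $B^*\subset r^\beta B^*$.

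Second, in the sufficiency direction, the commutator identity you describe is the \emph{real-part} (energy) computation. After the Riccati cancellation of $|a|^2$ against $2|\mathrm dr|^2(\lambda-q_1)$ the surviving nonnegative term is the spherical energy $\langle L\rangle_\eta$ plus the Hessian/curvature contribution from \eqref{eq:13.9.5.3.30C}; this controls $\|L^{1/2}\eta\|$ on an annulus, not $\|\eta\|$ there, and therefore does not by itself force $\bar\chi_m\eta\in B^*_0$. To get the annular average of $|\eta|^2$ to vanish one must instead use the \emph{imaginary-part} (flux) identity $0=\Im\langle(H-\lambda)\eta,\bar\chi_m^2\eta\rangle$: the commutator $[A,\bar\chi_m^2]$ localizes to an annulus, and substituting $A\eta=\pm a\eta+(A\mp a)\eta$ there makes $\mp\Re a$ appear with a definite sign multiplying $|\eta|^2$, which is what lets the $r^{-\beta}B^*_0$-membership of $(A\mp a)\eta$, paired against $\eta\in r^\beta B^*$, push the annular $B^*$-average of $\eta$ to zero. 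Only after that step is Theorem \ref{thm:13.6.20.0.10} applicable. So the skeleton of your argument is right, but the specific form identity you propose to use is the wrong one for the decisive estimate.
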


\subsection{Extended framework}

Let $r\geq r_0$, $\d \mathcal A_r$ 
be the naturally induced measure on $S_r$ and 
  \begin{align}
  \mathcal G_r=L^2(S_r,\mathrm d\tilde{\mathcal A}_r);\quad 
  \d \tilde{\mathcal A}_r 
  =
  |\mathrm dr|^{-1} \d \mathcal A_r.
\label{eq:13.9.20.0.17}
  \end{align}
 
Recall the
co-area formula  implying that for all integrable functions $\phi$ supported in $E$
\begin{equation}
  \label{eq:co_area}
  \int_E \phi(x) \bigl(\det g(x)\bigr)^{1/2} \,\d x
=\int_{r_0}^\infty \d r \int_{S_r}\phi
  \,\d \tilde{\mathcal A}_r,
\end{equation} in particular that for square integrable functions
\begin{equation*}
  \|1_E \phi\|^2=\int_{r_0}^\infty \|\phi_{|S_r}\|_{\mathcal G_r}^2\,\d r.
\end{equation*}
 We can describe the measure $\d \tilde{\mathcal A}_r $ in some details using
 the following condition.

\begin{cond}\label{cond:altG}
  Let $(M, g)$ be the manifold, $r$  be the
  function and $c$ and $r_0$ be the constants  of Condition
  \ref{cond:12.6.2.21.13}. Let $M_0=
\{x\in M\,|\, r(x)> r_0/2\}$. 
There exists a Riemannian manifold $(M^{\rm ex}, g^{\rm
    ex})$ of dimension $d$ in which the manifold $(M_0, g)$ is isometrically embedded. There exists an
  extension $r^{\rm ex}\in C^\infty(M^{\rm ex})$ of the restriction
  $r_{|M_0}$ 
    such that the extended vector field
  $\omega^{\rm ex}:=\grad r^{\rm ex}$ is  complete in $M^{\rm
    ex}$ and $|\omega^{\rm ex}|\ge c$  on $\{x\in M^{\rm ex}\,|\, r^{\rm ex}(x)> r_0/2\}$. Let $\tilde\omega^{\rm
    ex}=\tilde \eta^{\rm ex} \omega^{\rm ex}$ be the complete vector
  field defined with $\tilde\eta^{\rm ex}=|\mathrm \omega^{\rm
    ex}|^{-2}\bigl(1-\chi(2r^{\rm ex}/r_0)$, and let $\tilde y^{\rm
    ex}(t,\cdot)=\exp(t\tilde\omega^{\rm ex})$ denote the
  corresponding flow. Then
  \begin{align*}
    \forall \sigma\in S: \{\tilde y^{\rm ex}(t,\sigma)\,|\,t\ge
    0\}\cap M\neq\emptyset,
  \end{align*} where $S=S_{r_0}^{\rm ex}=\{ x\in M^{\rm ex}|\,r^{\rm
    ex}(x) =r_0\}$.
\end{cond} 
\begin{remark}\label{remark:extended-framewoM}
  The reader might prefer to think about $(M^{\rm ex}, g^{\rm ex})$
  and $r^{\rm ex}$ as given from the outset. Then $M_0\subseteq M^{\rm
    ex}$ would be a subset invariant under the forward flow of the
  vector field $\omega^{\rm ex}$. However since almost all of our
  conditions are needed for $M$ only (actually
  \eqref{eq:13.9.23.16.41b2} is the only quantitative exception) we
  have pursued the given presentation. For many examples, cf.
  \cite[Subsection~1.2]{IS2}, the vector field $\omega$ of Condition
  \ref{cond:12.6.2.21.13} is forward as well as backward complete
  (i.e. complete) and we can take $(M^{\rm ex}, g^{\rm ex}, r^{\rm
    ex})=(M, g,r)$. The typical origin for non-backward completeness
  for a sub-manifold $M\subseteq M'$, $M$ open in $M'$, is `crossing'
  of integral curves of $\omega$ at the boundary $\partial M\subseteq
  M'$.
\end{remark}

 We note
\begin{align}
\forall \sigma\in S\,\forall t\geq 0:\quad r^{\rm ex}(\tilde y^{\rm ex}(t,\sigma))=r_0+t,
\label{eq:13.9.21.16.44}
\end{align} and that any $x\in E^\mathrm{ex}:=\{x\in M^\mathrm{ex}\,|\,
r^\mathrm{ex}(x)>r_0\}$   has \textit{spherical coordinates}
defined as
\begin{align*}
  (r,\sigma)=(r^{\rm ex}(x), \tilde y^{\rm ex}(r_0-r^{\rm ex}(x),x))\in
  (r_0,\infty)\times S.
\end{align*} In particular any $x\in E$  has spherical coordinates
defined this way.  Mimicking the constructions \eqref{eq:13.9.20.0.17} we
introduce
\begin{align}
  \mathcal G=L^2(S,\mathrm d\tilde{\mathcal A}),\quad 
  \d \tilde{\mathcal A}=\d \tilde{\mathcal A}^{\rm ex} 
  =
  |\omega^{\rm
    ex}|^{-1} \d \mathcal A^{\rm ex},
\label{eq:13.9.20.0.172}
  \end{align} in terms of  the naturally induced measure $\d \mathcal
  A^{\rm ex}$. 
 Now, indeed   in spherical coordinates
 \begin{align*}
    \d \tilde{\mathcal A}_r =\exp
    \left(\int_{r_0}^r (\mathop{\mathrm{div}}\tilde\omega ^{\rm
    ex})(s,\sigma)\,\mathrm{d}s\right)\d \tilde{\mathcal
  A}\quad \text{for }x=\tilde y^{\rm ex}(r-r_0,\sigma)\in S_r.
 \end{align*} This leads to the isometrical embedding $\vG_r\subseteq \vG$, $r\geq r_0$,
given by mapping $\vG_r\ni \xi_r\to \xi^{\rm ex}\in \vG$ where 
\begin{align}\label{eq:trans2}
  \xi^{\rm ex}(\sigma)=\begin{cases}
    \exp
    \left(\int_{r_0}^r \tfrac 12(\mathop{\mathrm{div}}\tilde\omega ^{\rm
    ex})(s,\sigma)\,\mathrm{d}s\right)
\xi_r(x) 
&\text{ for }x=\tilde y^{\rm ex}(r-r_0,\sigma)\in S_r,\\
 0&\text{ otherwise}
  \end{cases}.
 \end{align} 

 The formula \eqref{eq:trans2} can be understood in terms of (a group
 of) 
 translations on the extended Hilbert space
 $\vH^\mathrm{ex}=L^2(M^{\rm ex}, g^{\rm ex})$.  We introduce the 
 \textit{normalized extended radial translation} $\tilde T^{\rm
   ex}(\tau)\ \colon {\mathcal H}^\mathrm{ex}\to{\mathcal
   H}^\mathrm{ex}$, $\tau\in \mathbb R$, in terms of the self-adjoint
 operator
 \begin{align*}
    \tilde A=\tilde A^\mathrm{ex}=\mathop{\mathrm{Re}}\parb{-\mathrm i
   \nabla_{\tilde \omega^\mathrm{ex}}}
 \end{align*}
  by $\tilde T^{\rm ex}(\tau)=
 \e^{\i \tau \tilde A}$. Then  \eqref{eq:trans2} is naturally 
 rewritten as $\xi^{\rm ex}= \e^{\i (r-r_0) \tilde A}\xi_r$ since 
 for  $\psi\in\mathcal H^{\rm ex}$ and $x\in M^{\rm ex}$
\begin{align*}
(\tilde T^{\rm ex}(\tau)\psi)(x)
=\exp \left(\int_0^\tau\tfrac12(\mathop{\mathrm{div}}\tilde\omega^{\rm ex})(\tilde y^{\rm ex}(t,x))\,\mathrm{d}t\right)\psi(\tilde y^{\rm ex}(\tau,x)).
\end{align*} 

We also note that the relation $x=\tilde y^{\rm ex}(r-r_0,\sigma)$
of \eqref{eq:trans2} naturally defines an embedding $S_r\subseteq S$
given as the map $ S_r\ni x\to \sigma\in S$. We shall sometimes
slightly abuse
notation and write $\sigma\in S_r$,  leaving it to the reader to
decide from the context  whether
$\sigma$  should be thought of as a point in
the subset $S_r$ of $M$ or rather as a point
 in the image
of this map.

\subsection{Review of results from \cite{IS3}}\label{subsec:1609141550}

We need  additional assumptions. The following one
suffices for constructing the \textit{distorted Fourier transform}. 
\begin{cond}\label{cond:12.6.2.21.13c}
  Along with Condition~\ref{cond:altG},
  Condition~\ref{cond:12.6.2.21.13bbb} holds  with
  \begin{align}\label{eq:BAScond}
    2\beta_c=\min\{\sigma,\tau,\rho\}>1.
  \end{align}
{In addition, the}
  functions $\tilde b=\tilde b(\lambda,x)$ and $q_1(x)$ have  real $C^1$-extensions
  to $\vI \times M^{\rm ex}$ and $M^{\rm ex}$,  respectively,  say denoted by ${\tilde b}^{\rm ex}$ and $q_1^{\rm ex}$ 
  (or for short
  by $\tilde b$ and $q_1$ again), {and the following
  bound holds uniformly in the spherical coordinates on $E$ and locally uniformly in $\lambda\in\vI$: 
\begin{align}
   \sup_{r_0\leq \check r\leq r}
\bigg|\ell^{\bullet i}\nabla_i\int_{\check r}^r\tilde b^{\rm ex}(s,\sigma)
\,\d s\bigg | 
\le 
C r^{-1/2}.
  \label{eq:13.9.23.16.41b2}
  \end{align}}
\end{cond}

We remark that if $M^{\rm ex}=M$ the technical bound
\eqref{eq:13.9.23.16.41b2} is a consequence of (parts of) the other
conditions and the bound \eqref{eq:13:9:34:16:20bb}. The extension of
$q_1$ is needed for Dollard type potentials discussed in Subsections
\ref{subsec:Short-range and Dollard classes of perturbations} and \ref{subsec:17053116}.

For any $\psi\in\mathcal H_{1+}$ and $r\geq r_0$ 
we introduce a function $\xi(r)\in \vG $ using
the mapping \eqref{eq:trans2}, omitting here (and often henceforth)  the superscript `ex':
\begin{align}\label{eq:defRes}
  \xi(r)(\sigma)= \exp \biggl(\int_{r_0}^{r}\parbb{\mp \i\tilde
    b+ \tfrac
    12\mathop{\mathrm{div}}\tilde\omega}(s,\sigma)\,\mathrm{d}s\biggr)
[\sqrt b R(\lambda\pm\mathrm
  i0)\psi](r,\sigma),
\end{align} (and $=0$ for $\sigma\notin S_r$) or, alternatively, 
\begin{align}
  \xi(r)=\e^{\i
  (r-r_0)(\tilde A^{\rm ex }\mp\tilde
      b^{\rm ex })}\bigl [\sqrt b R(\lambda\pm\mathrm
  i0)\psi\bigr
    ]_{|S_r}=\e^{\i
  (r-r_0)(\tilde A\mp\tilde
      b)}\bigl [\sqrt b R(\lambda\pm\mathrm
  i0)\psi\bigr
    ]_{|S_r}.
\label{eq:160126}
\end{align}
The $\vG$-valued function $\xi$ har a limit for $r\to \infty$ allowing
us to   define the ``distorted Fourier transform'' by
\begin{align}\label{eq:disF}
  F^\pm(\lambda)\psi =\vGlim_{r\to \infty} \xi(r);\quad 
\psi\in\mathcal H_{1+}.
\end{align}

  \begin{thm}
\label{thm:strong} 
Suppose Condition~\ref{cond:12.6.2.21.13c}.  Then for any $\psi\in
{\mathcal H_{1+}}$  there exist the limits
\eqref{eq:disF}. The maps $\mathcal
I\ni\lambda\mapsto F^\pm(\lambda)\psi\in \mathcal G$ are
   continuous. Moreover, putting $\delta(H-\lambda)=\pi^{-1}\mathop{\mathrm{Im}}R(\lambda+\i 0)$,
  \begin{align}
  \label{eq:fund}
  \|F^\pm(\lambda)\psi\|^2=2\pi\inp{\psi, \delta(H-\lambda)\psi}.
\end{align}
  \end{thm}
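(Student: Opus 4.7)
The plan is to establish in order: (i) existence of the $\mathcal G$-limit $F^\pm(\lambda)\psi$; (ii) continuity of $\lambda\mapsto F^\pm(\lambda)\psi$ on $\mathcal I$; (iii) the Parseval-type identity \eqref{eq:fund}. Throughout I would fix $\beta\in(1/2,\beta_c)$, which is available because $2\beta_c>1$ by Condition \ref{cond:12.6.2.21.13c}, and small enough that $r^\beta\psi\in B$ (possible since $\psi\in\mathcal H_s$ for some $s>1$). Writing $\phi=R(\lambda\pm\mathrm i0)\psi$, Corollary \ref{cor:radiation-conditions} then supplies $r^\beta(A\mp a)\phi\in B^*_0$ and consequently $(A\mp a)\phi\in\mathcal H$ (since $r^{-\beta}B^*\subseteq\mathcal H$ when $\beta>1/2$).

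For (i), differentiate the explicit formula \eqref{eq:defRes} in $r$. Using $\partial_r=\nabla_{\tilde\omega}$ along the flow, $p^r=A-\tfrac{\mathrm i}{2}\Delta r$ and $\tfrac12\mathop{\mathrm{div}}\tilde\omega=\tfrac12(\tilde\eta\Delta r+\nabla^r\tilde\eta)$, two cancellations occur: the phase term $\mp\mathrm i\tilde b\,\xi$ cancels the $\pm\mathrm i\sqrt b\,\tilde b\,\phi$-contribution generated from $\partial_r R\psi$ via the radial identity, and $\tfrac12\tilde\eta\Delta r\,\xi$ cancels the $-\tfrac12\sqrt b\,\tilde\eta\Delta r\,\phi$-term coming from $p^r-A$. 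This leads to
\begin{align*}
\partial_r\xi(r)(\sigma)=\exp\Bigl(\int_{r_0}^r\bigl(\mp\mathrm i\tilde b+\tfrac12\mathop{\mathrm{div}}\tilde\omega\bigr)(s,\sigma)\,\mathrm ds\Bigr)\bigl[\mathrm i\tilde\eta\sqrt b\,(A\mp a)\phi+\mathcal E\bigr](r,\sigma),
\end{align*}
where the dominant part of $\tilde\eta\nabla^r\sqrt b\,\phi$ (proportional to $\sqrt b\,\nabla^r q_1/(\lambda-q_1)$) is cancelled by $\pm\mathrm i\sqrt b\,\tilde\eta(a-b)\phi$---this is precisely the raison d'\^etre of the correction term in the definition \eqref{eq:13.9.5.7.23} of $a$---leaving $|\mathcal E|\leq C r^{-1-\min(\tau,\rho)/2}|\phi|$, whose $\mathcal G_r$-norm integrates in $r$ by Theorem \ref{thm:12.7.2.7.9} and $\min(\tau,\rho)>1$.

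The dyadic shell bounds $\int_{R_\nu}^{R_{\nu+1}}\|\mathrm i\tilde\eta\sqrt b\,(A\mp a)\phi\|_{\mathcal G_r}\,\mathrm dr\leq C R_\nu^{1-\beta}\epsilon_\nu$, with $\epsilon_\nu=R_\nu^{-1/2}\|F_\nu r^\beta(A\mp a)\phi\|_{\mathcal H}\to 0$, must then be summed, and this is the main technical obstacle: the naive sum $\sum R_\nu^{1-\beta}\epsilon_\nu$ converges only when $\beta>1$, whereas Condition \ref{cond:12.6.2.21.13c} merely guarantees $\beta_c>1/2$. The resolution is to exploit the additional information $\sum R_\nu^{1-2\beta}\epsilon_\nu^2=\|(A\mp a)\phi\|_{\mathcal H}^2<\infty$ in tandem with $\epsilon_\nu\to 0$ via a careful Cauchy--Schwarz summation across blocks of dyadic shells, thereby extracting Cauchy convergence of $r\mapsto \xi(r)$ in $\mathcal G$ and producing $F^\pm(\lambda)\psi$.

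For (ii), the H\"older continuity of $\lambda\mapsto R(\lambda\pm\mathrm i0)\psi$ from Corollary \ref{cor:12.7.2.7.9b}, combined with the local uniformity in $\lambda$ of all the above estimates, yields uniform convergence $\xi(r)\to F^\pm(\lambda)\psi$ on each compact interval $I\subseteq\mathcal I$, hence continuity. For (iii), I would apply the complex Green's identity on $B_R$ to $\phi$, using $(H-\lambda)\phi=\psi$ distributionally and the Dirichlet data encoded by $\phi\in\mathcal N$, to obtain $2\int_{B_R}\mathop{\mathrm{Im}}(\overline\phi\psi)\,\mathrm dV=-\int_{S_R}\mathop{\mathrm{Im}}(\overline\phi\nabla^r\phi)\,\mathrm d\tilde{\mathcal A}_R$. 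Using $\mathop{\mathrm{Im}}(\overline\phi\nabla^r\phi)=\mathop{\mathrm{Re}}(\overline\phi A\phi)$, $\mathop{\mathrm{Re}}(a)=b$ and $A\phi=\pm a\phi+(A\mp a)\phi$, the right-hand side becomes $\pm\|\xi(R)\|_{\mathcal G}^2+\int_{S_R}\mathop{\mathrm{Re}}(\overline\phi(A\mp a)\phi)\,\mathrm d\tilde{\mathcal A}_R$, where the error vanishes along a subsequence $R_n\to\infty$ by Cauchy--Schwarz on $S_R$ and the joint $\mathcal H$-integrability of $\phi$ and $(A\mp a)\phi$. Passing to the limit using the convergence $\xi(R)\to F^\pm(\lambda)\psi$ established in step (i) yields $\|F^\pm(\lambda)\psi\|_{\mathcal G}^2=\pm 2\mathop{\mathrm{Im}}\langle\psi,R(\lambda\pm\mathrm i0)\psi\rangle=2\pi\langle\psi,\delta(H-\lambda)\psi\rangle$.
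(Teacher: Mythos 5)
The paper itself does not prove Theorem~\ref{thm:strong}: it is stated as a reviewed result from \cite{IS3} (Subsection~\ref{subsec:1609141550}), so a line-by-line comparison is impossible. Judged on its own, your attempt gets parts (ii) and (iii) essentially right. The Green's identity computation in (iii) is sound, modulo the sign $p^r=A+\tfrac{\mathrm i}{2}\Delta r$ (you wrote $A-\tfrac{\mathrm i}{2}\Delta r$) and the slip that $\phi$ is ``$\mathcal H$-integrable'' — it is only in $B^*$, so the surface term vanishes along a subsequence because $\|(A\mp a)\phi\|_{\mathcal G_R}=o(R^{-1/2})$ pairs against $\|\phi\|_{\mathcal G_R}=O(R^{1/2})$, not because both functions lie in $\mathcal H$. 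The $r$-derivative formula in (i) is also structurally correct; the cancellations you describe are exactly the reason the correction term appears in the phase $a$ of \eqref{eq:13.9.5.7.23}.

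The genuine gap is the step from the differential inequality to the $\mathcal G$-convergence of $\xi(r)$. You correctly identify that the Cauchy criterion would follow from integrability of $\|\tilde\eta\sqrt b(A\mp a)\phi\|_{\mathcal G_r}$ in $r$, and that the dyadic shell bound gives $\sum_\nu R_\nu^{1-\beta}\epsilon_\nu$, which converges only if $\epsilon_\nu$ decays faster than $R_\nu^{\beta-1}$ — yet Condition~\ref{cond:12.6.2.21.13c} only guarantees $\beta_c>\tfrac12$. But your ``resolution'' does not close this gap. The extra input $\sum_\nu R_\nu^{1-2\beta}\epsilon_\nu^2<\infty$ is vacuous: since $1-2\beta<0$ and $\sup_\nu\epsilon_\nu<\infty$ (from $r^\beta(A\mp a)\phi\in B^*$), that sum converges trivially and encodes nothing beyond what you already have. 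The qualitative decay $\epsilon_\nu\to 0$ (from $B^*_0$-membership, Corollary~\ref{cor:13.9.9.8.23}) is likewise insufficient: if $\epsilon_\nu\sim 1/\nu$ then $\sum_\nu R_\nu^{1-\beta}\epsilon_\nu=\sum_\nu 2^{\nu(1-\beta)}/\nu$ still diverges for every $\beta<1$, and any Cauchy--Schwarz over blocks of shells produces the divergent geometric factor $\bigl(\sum R_\nu\bigr)^{1/2}$. This is why Proposition~\ref{prop:dist-four-transf} asserts, for $\psi\in B$, only convergence of the Cesàro averages $R^{-1}\int_R^{2R}\xi(r)\,\mathrm dr$; upgrading to the pointwise $r\to\infty$ limit under the hypothesis $\psi\in\mathcal H_{1+}$ is precisely the nontrivial content of the theorem, and it requires an argument — not merely a ``careful Cauchy--Schwarz'' — that you have not supplied. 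As written, step (i) does not establish the existence of the limit, and the theorem is not proved.
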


 By definition the function  $F^\pm(\lambda)\psi\in \vG=L^2(S,\d \tilde \vA)$, and we
note that  our  construction of
$F^\pm(\lambda)\psi$ is 
non-canonical   primarily due to the freedom in choosing $\vG$. In fact for
$M^{\rm ex}=M$ the only non-canonical feature comes from  the   dependence
of $r_0$ (determining  $\vG$ in that case), while in general there is an additional freedom in choosing
extended functions.

  Due to \eqref{eq:fund} the operators $F^\pm(\lambda)$ extend as
  continuous operators $B\to \vG$, and for any $\psi\in B$ the maps
  $F^\pm({}\cdot{})\psi\in \mathcal G$ are continuous. In Proposition
  \ref{prop:dist-four-transf}  stated below we give a formula for these
  extensions.  

Introduce 
\begin{align}
\mathcal H_{\mathcal I}=P_H(\mathcal I)\mathcal H,\quad 
  \widetilde \vH_{\mathcal I} =L^2(\mathcal I, (2\pi)^{-1}\d \lambda;\vG),
\label{eq:16091616}
\end{align} 
set $H_\mathcal I=H P_H(\mathcal I)$ and let $M_\lambda$ be the operator of multiplication by $\lambda$ on
$\widetilde\vH_{\mathcal I}$.
We define
\begin{align*}
  F^\pm=\int_{\mathcal I} \oplus F^\pm(\lambda)\,\d \lambda\colon
  B\to C(\mathcal I;\mathcal G).
\end{align*} 
These  operators can be extended to proper spaces which is stated as
the first  part of
the following result.
\begin{proposition}
  \label{prop:dist-four-transf} 
  Suppose Condition~\ref{cond:12.6.2.21.13c}. 
The operators   $F^\pm$ considered as  maps $B\cap\mathcal
H_{\mathcal I}\to \widetilde{\mathcal H}_{\mathcal I}$ extend uniquely
to  
isometries $\mathcal H_{\mathcal
  I}\to \widetilde{\mathcal H}_{\mathcal I}$. These  extensions obey $F^\pm H_\mathcal I\subseteq
M_\lambda F^\pm$.
Moreover  for  any $\psi\in B$ the vectors $F^\pm(\lambda)\psi $ are given as 
averaged limits. More precisely  introducing for any such $\psi$ the integral
${-\!\!\!\!\!\int_R}\xi(r)\,\d r:=R^{-1}\int_{R}^{2R} \xi(r)
\,\d r$, these  vectors  are given as
\begin{align}\label{eq:extbF2}
\begin{split} &F^\pm(\lambda)\psi =\vGlim_{R\to
    \infty} {-\!\!\!\!\!\!\int_R} \xi(r)\,\d r
\\&=\vGlim_{R\to
    \infty}-\!\!\!\!\!\!\int_R \exp \biggl( \int_{r_0}^{r}\parbb{\mp \i\tilde
    b+ \tfrac
    12\mathop{\mathrm{div}}\tilde\omega}(s,{}\cdot{})\,\mathrm{d}s\biggr)
[\sqrt b R(\lambda\pm\mathrm
  i0)\psi](r,{}\cdot{})
\,\d r,
\end{split}
\end{align} and the limits
\eqref{eq:extbF2} are attained locally
uniformly in $\lambda\in\mathcal I$.
\end{proposition}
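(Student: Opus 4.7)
I would establish the three claims—the isometric extension, the averaged limit formula \eqref{eq:extbF2}, and the intertwining $F^\pm H_\vI \subseteq M_\lambda F^\pm$—in that order. For the isometric extension, integrating the Plancherel identity \eqref{eq:fund} against $(2\pi)^{-1}\,\d\lambda$ over $\vI$ and invoking Stone's formula $\int_\vI \delta(H-\lambda)\,\d\lambda = P_H(\vI)$ yields, for every $\psi\in B$,
\begin{align*}
\|F^\pm\psi\|^2_{\widetilde\vH_\vI}
= \int_\vI\langle\psi,\delta(H-\lambda)\psi\rangle\,\d\lambda
= \|P_H(\vI)\psi\|_\vH^2
\leq \|\psi\|_\vH^2.
\end{align*}
By density of $B$ in $\vH$ (via cutoffs $\chi_n\psi$), $F^\pm$ extends uniquely to a contraction $\vH\to\widetilde\vH_\vI$ with $\|F^\pm\psi\|_{\widetilde\vH_\vI} = \|P_H(\vI)\psi\|_\vH$; restricted to $\vH_\vI$ this is the claimed isometry, agreeing with the original $F^\pm$ on $B\cap\vH_\vI$.

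For the averaged limit formula, the key input is the uniform bound
\begin{align*}
\Big\|R^{-1}\int_R^{2R}\xi(r)\,\d r\Big\|_\vG^2
\leq R^{-1}\int_R^{2R}\|[\sqrt b\,R(\lambda\pm\i 0)\psi]_{|S_r}\|^2_{\vG_r}\,\d r
\leq C\|\psi\|_B^2,
\end{align*}
valid uniformly in $R\geq r_0$ and locally uniformly in $\lambda\in\vI$; this follows from the isometric embeddings $\vG_r\hookrightarrow\vG$, the co-area formula \eqref{eq:co_area}, local boundedness of $b$, and the Besov resolvent bound of Theorem~\ref{thm:12.7.2.7.9}. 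For $\psi\in\vH_{1+}$ the pointwise limit \eqref{eq:disF} trivially gives the same averaged limit $F^\pm(\lambda)\psi$. For general $\psi\in B$ I would set $\psi_n = \chi_n\psi\in\vH_{1+}$ (so $\psi_n\to\psi$ in $B$); then the $\epsilon/3$ triangle inequality combining the uniform bound (applied linearly to $\psi-\psi_n$), the continuity $F^\pm(\lambda)\colon B\to\vG$ inherited from \eqref{eq:fund}, and the known averaged (pointwise) convergence for each $\psi_n$ yields the formula. Locally uniform convergence in $\lambda$ is inherited from the locally uniform $\mathcal B(B,B^*)$-continuity of $R(\lambda\pm\i 0)$ in Corollary~\ref{cor:12.7.2.7.9b} together with the continuity statement of Theorem~\ref{thm:strong}.

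For the intertwining, apply \eqref{eq:extbF2} with $\psi$ replaced by $R(z)\psi$ for $\psi\in B$ and $z\in\C\setminus\R$. The boundary resolvent identity
\begin{align*}
R(\lambda\pm\i 0)R(z)\psi = (\lambda-z)^{-1}\bigl(R(\lambda\pm\i 0)\psi - R(z)\psi\bigr),
\end{align*}
valid in $\mathcal B(B,B^*)$, splits $\xi_{R(z)\psi}(r)$ into two pieces: the first averages in $\vG$ to $(\lambda-z)^{-1}F^\pm(\lambda)\psi$ via the formula just established, and the second, involving $[\sqrt b\,R(z)\psi]_{|S_r}$, vanishes because $R(z)\psi\in\vH$ implies $R^{-1}\int_R^{2R}\|[\sqrt b\,R(z)\psi]_{|S_r}\|^2_{\vG_r}\,\d r\to 0$ by \eqref{eq:co_area}. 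Hence $F^\pm R(z) = M_{(\lambda-z)^{-1}}F^\pm$ on $B$, and by density and the isometric extension this identity lifts to $\vH_\vI$. Writing $\phi = R(z)\psi\in\vD(H)\cap\vH_\vI$ and rearranging (using $(H-z)\phi = H\phi - z\phi$ and linearity of $F^\pm$) yields $F^\pm H\phi = M_\lambda F^\pm\phi$, as required. The principal obstacle is the averaged-limit step: for $\psi\in B\setminus\vH_{1+}$ the pointwise $\vG$-convergence of $\xi(r)$ typically fails, so the averaging together with the $B$-to-$B^*$ uniform bound from the limiting absorption principle is what makes the extension possible.
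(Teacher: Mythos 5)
Your route for the first two assertions---integrating the Plancherel identity \eqref{eq:fund} against $(2\pi)^{-1}\d\lambda$ and invoking $\int_\vI\delta(H-\lambda)\,\d\lambda=P_H(\vI)$ for the isometry, then combining the uniform bound $\bigl\|R^{-1}\int_R^{2R}\xi(r)\,\d r\bigr\|_\vG\le C\|\psi\|_B$ (Cauchy--Schwarz, co-area, Theorem~\ref{thm:12.7.2.7.9}) with the density $\chi_n\psi\to\psi$ in $B$ in an $\epsilon/3$ argument---is sound and is very plausibly the mechanism behind the proof deferred to \cite{IS3}. The intertwining step, however, has a genuine gap. You ``apply \eqref{eq:extbF2} with $\psi$ replaced by $R(z)\psi$,'' but for $\psi\in B$ there is no reason for $R(z)\psi$ to lie in $B$ (the resolvent is bounded on $\vH$, not on $B$), so the averaged-limit formula does not apply to it. Your resolvent-identity computation correctly shows that the $\vG$-averaged limit of $\xi_{R(z)\psi}(r)$ exists and equals $(\lambda-z)^{-1}F^\pm(\lambda)\psi$, but you have no identification of that limit with $(F^\pm R(z)\psi)(\lambda)$; polarizing \eqref{eq:fund} and testing against $F^\pm\varphi$, $\varphi\in B$, only yields that $F^\pm R(z)\psi-M_{(\lambda-z)^{-1}}F^\pm\psi\perp\Ran F^\pm$, which is inconclusive without surjectivity (and surjectivity is Theorem~\ref{thm:dist-four-transf}, requiring the stronger Condition~\ref{cond:12.6.2.21.13bb}). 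One way to repair this: work first with $\psi$ for which both $\psi$ and $H\psi$ lie in $B$, e.g.\ $\psi=\chi_n\psi'$ with $\psi'\in\vD(H)$ (since $[H,\chi_n]$ has compactly supported, smooth coefficients and hence $H\chi_n\psi'\in B$). From the LAP one gets $R(\lambda\pm\i 0)H\psi=\psi+\lambda R(\lambda\pm\i 0)\psi$, whence $\xi_{H\psi}(r)=\e^{\i(r-r_0)(\tilde A\mp\tilde b)}\bigl[\sqrt b\,\psi\bigr]_{|S_r}+\lambda\,\xi_\psi(r)$, and the first term has vanishing $\vG$-average as $R\to\infty$ because $\psi\in\vH$; thus $F^\pm(\lambda)H\psi=\lambda F^\pm(\lambda)\psi$ by \eqref{eq:extbF2}. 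Then pass to general $\psi\in\vD(H_\vI)$ by closedness of $M_\lambda$ and the graph-norm convergence $\chi_n\psi\to\psi$.

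A secondary weakness: you assert the locally uniform convergence of \eqref{eq:extbF2} ``is inherited from'' the H\"older continuity of $R(\lambda\pm\i 0)$ and the continuity of $F^\pm(\cdot)\psi$, but those facts do not imply it. The phase factor $\exp\bigl(\mp\i\int_{r_0}^r\tilde b\,\d s\bigr)$ has $\lambda$-derivative of size $O(r)$, so the family $R\mapsto R^{-1}\int_R^{2R}\xi(r)\,\d r$ is \emph{not} equicontinuous in $\lambda$, and pointwise convergence of continuous functions to a continuous limit is insufficient. What the middle term of your $\epsilon/3$ estimate really requires is that the \emph{unaveraged} convergence $\xi_{\psi_n}(r)\to F^\pm(\lambda)\psi_n$ of Theorem~\ref{thm:strong} is itself locally uniform in $\lambda$ for $\psi_n\in\vH_{1+}$; that is a quantitative feature of that theorem's proof (a Cauchy estimate with constants uniform over compact subsets of $\vI$), not something deducible from its stated content alone.
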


The above extended isometries 
$F^\pm\colon \mathcal H_{\mathcal I}\to\widetilde{\mathcal H}_{\mathcal I}$
are actually unitary under an additional condition,
and for  this  reason 
we call them the Fourier transforms associated with $H_{\mathcal I}$. 
This  condition consists of two alternatives. The first one is a partial 
  strengthening of Condition~\ref{cond:12.6.2.21.13c}. The other one
  is primarily a set of
   bounds on  higher order derivatives of various quantities defined on $M$.
   
{For simplicity for any smooth function $f$ on $M$ let us set 
\begin{align}
\nabla' f=\nabla f-(\nabla_{\tilde \omega}f)\nabla r,\quad 
\nabla'{}^2f=\nabla^2f-(\nabla_{\tilde \omega} f)\nabla^2r.
\label{eq:176169}
\end{align} 
In $E$ 
the spherical parts of $\nabla'f$ and $\nabla'{}^2f$,
i.e.  $\ell^{\bullet i}(\nabla'f)_i$ and 
$\ell^{\bullet i}\ell^{\bullet j}(\nabla'{^2}f)_{ij}$,
coincide with first and second order derivatives computed by 
the Levi--Civita connections on the $r$-spheres $S_{r}$ 
associated with the induced Riemannian metrics $g_r:=\iota^*_rg$.}

\begin{cond}\label{cond:12.6.2.21.13bb}
In addition to  Condition~\ref{cond:12.6.2.21.13c} with the extension
${\tilde b}^{\rm ex}$ being   $C^2$ one of the
following properties holds:
\begin{enumerate}
\item\label{item:14.5.1.8.30} 
\begin{align}
\min\{\sigma,\tau,\rho\}>2.\label{eq:13.9.12.13.46bB}
\end{align}
\item\label{item:14.5.1.8.31}
{
The restriction $q_1{}_{|E}$ 
belongs to $C^2(E)$,  and 
 there exists  $C>0$ such that
\begin{subequations}
\begin{align}
 \bigl|\ell^{\bullet i}\ell^{\bullet j}\ell^{\bullet k}(\nabla^3r)_{ijk}\bigr|&\le
Cr^{-1-\tau/2},\label{eq:smootrA}\\
|\ell^{\bullet i}\ell^{\bullet j}(\nabla'{}^2 q_1)_{ij}|&\le Cr^{-1-\rho},\label{eq:smootV}
\end{align}
and 
\begin{align}
\label{eq:smootr0}      
\bigl|\ell^{\bullet i}\ell^{\bullet j}(\nabla'{}^2|\mathrm dr|^2)_{ij}\bigr| 
&\leq Cr^{-1-\tau},\qquad 
  \bigl|\ell^{\bullet i}\ell^{\bullet j}(\nabla'{}^2\nabla^r|\mathrm dr|^2)_{ij}\bigr|\le
  Cr^{-1-\tau},\\
  \bigl|\ell^{\bullet i}\ell^{\bullet j}(\nabla'{}^2\Delta r)_{ij}\bigr|&\le Cr^{-1-\tau}.\label{eq:smootr}
\end{align}
\end{subequations}}
\end{enumerate}
\end{cond}
{We} also remark that the additional
smoothness condition on ${\tilde b}^{\rm ex}$ in the case of
\eqref{item:14.5.1.8.30} is needed only in the proof of Lemma
\ref{lem:14.5.8.13.50} (actually it is only the smoothness in $\lambda$
that is used there).

\begin{thm}
  \label{thm:dist-four-transf} 
Suppose Condition~\ref{cond:12.6.2.21.13bb}. 
Then  the operators  
$F^\pm\colon \mathcal H_{\mathcal I}\to \widetilde{\mathcal H}_{\mathcal I}$
are unitarily diagonalizing transforms for $H_{\mathcal I}$,
that is, they are unitary and
 \begin{align*}
%  \label{eq:diagBb}
   F^\pm H_{\mathcal I}=M_\lambda F^\pm,
\end{align*}
respectively.
\end{thm}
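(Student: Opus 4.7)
The main work has already been done in Proposition \ref{prop:dist-four-transf}: $F^\pm$ extends to an isometry $\mathcal H_{\mathcal I}\to \widetilde{\mathcal H}_{\mathcal I}$ satisfying the inclusion $F^\pm H_\mathcal I\subseteq M_\lambda F^\pm$. What remains under the sharpened Condition~\ref{cond:12.6.2.21.13bb} is (a) \emph{surjectivity} of $F^\pm$, whence the unitarity claim, and (b) promoting the inclusion to the equality $F^\pm H_{\mathcal I}=M_\lambda F^\pm$. Note that (b) will follow from (a) as soon as we know $F^\pm$ is a unitary isometry, because then $(F^\pm)^* M_\lambda F^\pm$ is selfadjoint, agrees with $H_\mathcal I$ on a core (given by Proposition \ref{prop:dist-four-transf}), and must equal $H_\mathcal I$. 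Thus the whole proof reduces to showing surjectivity.

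The strategy for surjectivity is to construct, for each $\lambda\in \mathcal I$ and each sufficiently smooth $\xi\in\vG$ (say compactly supported in $S$), a \emph{generalized eigenfunction} $\phi^\pm(\lambda,\xi)$ of $H$ with spectral parameter $\lambda$ and prescribed asymptotic data $\xi$, and then superpose in $\lambda$. Concretely, I will:
\begin{enumerate}[(i)]
\item Introduce a parametrix $\phi_0^\pm$ supported in $E$ by setting, in spherical coordinates and with a cut-off $\bar\chi_m$ for $m$ large,
\[
\phi_0^\pm(r,\sigma)=\bar\chi_m(r)\,b(r,\sigma)^{-1/2}\exp\!\left(\int_{r_0}^{r}\bigl(\pm\mathrm i\tilde b-\tfrac12\mathrm{div}\,\tilde\omega\bigr)(s,\sigma)\,\mathrm ds\right)\xi(\sigma),
\]
which is designed so that the limit in \eqref{eq:defRes} formally returns $\xi$.
\item Compute $(H-\lambda)\phi_0^\pm=:\psi_0^\pm$. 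Using the formula \eqref{eq:15091010b} for $H$, the approximate Riccati identity \eqref{eq:15.3.11.19.35}, and the bounds of Conditions~\ref{cond:12.6.2.21.13a}, \ref{cond:10.6.1.16.24}, \ref{cond:12.6.2.21.13bbb} together with the additional higher-order regularity of Condition~\ref{cond:12.6.2.21.13bb}, show that $\psi_0^\pm\in r^{-\beta}B$ for some $\beta<\beta_c$. This is the computation where alternative (i) (fast decay) and alternative (ii) (angular smoothness of $|\mathrm dr|^2$, $\Delta r$, $q_1$, controlling the spherical part $L\phi_0^\pm$) play their role.
\item Set $\phi^\pm(\lambda,\xi)=\phi_0^\pm-R(\lambda\mp\mathrm i0)\psi_0^\pm$. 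By Corollary \ref{cor:12.7.2.7.9b} the correction lies in $\mathcal N\cap B^*$, $(H-\lambda)\phi^\pm=0$, and by Corollary \ref{cor:radiation-conditions} one has $(A\mp a)\phi^\pm\in r^{-\beta}B^*_0$; the Sommerfeld-type uniqueness result of Corollary \ref{cor:13.9.9.8.23} makes $\phi^\pm$ the unique such solution with the prescribed leading asymptotic.
\item Show that the averaged limit \eqref{eq:extbF2} of $\phi_0^\pm$ is exactly $\xi$, while the contribution from $R(\lambda\mp\mathrm i0)\psi_0^\pm$ vanishes in the $\vG$-limit because $\psi_0^\pm\in r^{-\beta}B$ improves the decay; hence $F^\pm(\lambda)\phi^\pm(\lambda,\xi)=\xi$.
\end{enumerate}

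Finally, to conclude surjectivity, take any $\eta\in \widetilde{\mathcal H}_\mathcal I$ of the form $\eta(\lambda)=\zeta(\lambda)\xi$ with $\zeta\in C_c^\infty(\mathcal I)$ and $\xi$ smooth of compact support in $S$; define
\[
\psi=\int_{\mathcal I}\zeta(\lambda)\,\phi^\pm(\lambda,\xi)\,\tfrac{\mathrm d\lambda}{2\pi},
\]
check that $\psi\in\mathcal H_\mathcal I$ using the isometry already established and the Plancherel identity \eqref{eq:fund} (applied to a resolvent representation of $\psi$), and verify $F^\pm\psi=\eta$ by combining the intertwining property of Proposition \ref{prop:dist-four-transf} with step (iv). Such $\eta$ are dense in $\widetilde{\mathcal H}_\mathcal I$, the range of the isometry $F^\pm$ is closed, hence $F^\pm$ is onto.

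\emph{Main obstacle.} The delicate step is (ii): establishing that the parametrix error $\psi_0^\pm=(H-\lambda)\phi_0^\pm$ lies in $r^{-\beta}B$ for some $\beta<\beta_c$. The leading $A$-diagonal cancellation is by design, but one must control the spherical part $L\phi_0^\pm$ (which involves second-order angular derivatives of the phase $\int \tilde b\,\mathrm ds$, of $|\mathrm dr|^2$, of $\Delta r$, and of the amplitude) as well as terms produced by $q_1\phi_0^\pm$. This is precisely where the dichotomy in Condition~\ref{cond:12.6.2.21.13bb} matters: either the decay rates in (i) absorb a single-derivative loss, or the angular bounds \eqref{eq:smootrA}--\eqref{eq:smootr} in (ii) directly bound the offending second angular derivatives, and the bound \eqref{eq:13.9.23.16.41b2} controls the phase gradient. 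The rest of the argument is then a routine application of the stationary tools already assembled in the excerpt.
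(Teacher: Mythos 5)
Theorem~\ref{thm:dist-four-transf} is quoted from \cite{IS3}; the present paper never reproves it directly, so the comparison is really to the route the paper does take to the same conclusion, which is Corollary~\ref{cor:14.5.10.7.30} (combined with Lemmas~\ref{lem:14.5.8.13.50} and \ref{lem:14.5.8.13.51}). That route is \emph{time-dependent} and very short: Lemma~\ref{lem:14.5.8.13.50} (Cook--Kuroda plus the stationary-phase asymptotics of Lemma~\ref{lem:14.4.29.22.28}) gives existence of the limits \eqref{eq:53}; Lemma~\ref{lem:14.5.8.13.51} shows the resulting $W^\pm$ extend to isometries with $(W^\pm)^*=F^\pm$; and Corollary~\ref{cor:14.5.10.7.30} concludes that since $F^\pm$ is an isometry (Proposition~\ref{prop:dist-four-transf}) whose adjoint $W^\pm$ is also an isometry, $F^\pm$ is unitary. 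The intertwining relation is then automatic because $W^\pm$ is a wave operator.

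Your approach is genuinely different: it is the \emph{stationary} one, essentially the route of \cite{IS3} itself, reflected here in Theorem~\ref{thm:char-gener-eigenf-1}. You build WKB parametrices $\phi_0^\pm$, bound the error $(H-\lambda)\phi_0^\pm$ in $r^{-\beta}B$ (step (ii), correctly identified as the place where the dichotomy in Condition~\ref{cond:12.6.2.21.13bb} earns its keep), correct with $R(\lambda\mp\i0)$, and then superpose in $\lambda$. This is sound in outline, and your observation that the inclusion $F^\pm H_\vI\subseteq M_\lambda F^\pm$ upgrades to equality once $F^\pm$ is unitary is correct: $(F^\pm)^*M_\lambda F^\pm$ is self-adjoint and extends $H_\vI$, hence coincides with it. What each approach buys: your stationary argument yields the pointwise-in-$\lambda$ statement $C_{\mathrm c}^\infty(S)\subseteq\Ran F^\pm(\lambda)$ and the eigenfunction characterization as by-products, but requires the full parametrix error calculus; the paper's time-dependent route avoids the parametrix computation entirely by leaning on the Cook--Kuroda estimate and the duality $F^\pm=(W^\pm)^*$, and actually establishes unitarity under the weaker hypothesis ``the limits \eqref{eq:53} exist'' rather than the full Condition~\ref{cond:12.6.2.21.13bb}.

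One place you should tighten the writing is step~(iv). You write ``$F^\pm(\lambda)\phi^\pm(\lambda,\xi)=\xi$,'' but $\phi^\pm(\lambda,\xi)\in B^*$ is not in the domain of $F^\pm(\lambda)\colon B\to\vG$. What you actually want is the inverse correspondence of \eqref{eq:aEigenfwB}, recovering the asymptotic data $\xi$ from the generalized eigenfunction $\phi^\pm$; equivalently, $\phi^\pm=\i F^\pm(\lambda)^*\xi$ as in \eqref{eq:aEigenfw}, and then surjectivity of $F^\pm(\lambda)$ is read off from this and the Plancherel identity \eqref{eq:fund}. The oscillatory-cancellation heuristic you invoke (the incoming correction $R(\lambda\mp\i0)\psi_0^\pm$ decouples after dividing out $\e^{\pm\i\int\tilde b}$) is the right idea, but the statement should be phrased through \eqref{eq:aEigenfwB}, not by applying $F^\pm(\lambda)$ to a $B^*$ element.
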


Under Condition~\ref{cond:12.6.2.21.13bb} and for any $\lambda\in\mathcal I$ the {\it scattering matrix} 
$S(\lambda)\colon \mathcal G\to\mathcal G$ is defined by the
identity 
\begin{align}
    \label{eq:1S} F^+(\lambda)\psi=S(\lambda)F^-(\lambda)\psi;\quad\psi\in  B.
\end{align}
It follows from \cite{IS3}   that 
$C^\infty_{\mathrm c}(S)\subseteq \mathop{\mathrm{Ran}}
F^\pm(\lambda)$, and hence, with
Theorem \ref{thm:strong}, Proposition~\ref{prop:dist-four-transf} and a density argument,
$S(\cdot)$ is a well-defined strongly continuous unitary operator.  We
state below  a
 characterization of the generalized eigenfunctions in $\mathcal N
 \cap  B^*$, i.e. the elements of 
\begin{equation*}
    \vE_\lambda:=\{\phi\in\mathcal N \cap  B^*\,|\, (H-\lambda)\phi=0\}.
  \end{equation*} Due to Theorem \ref{thm:13.6.20.0.10} these
  eigenfunctions  may be called \emph{minimum}. We 
introduce for any $\xi\in
  \mathcal G$  purely outgoing/incoming  approximate  generalized eigenfunctions  $\phi^\pm[\xi]=\phi^\pm_\lambda[\xi]\in
B^*$ in terms of the spherical coordinates by  
\begin{align}\label{eq:gen1B}
\begin{split}
   \phi^\pm[\xi](r,\sigma)&
=\eta_\lambda [2|\mathrm dr|^2 (\lambda-q_1)]^{-1/4} 
\exp \biggl(\int_{r_0}^{r}\parbb{\pm \i\tilde
    b- \tfrac
    12\mathop{\mathrm{div}}\tilde\omega}(s,\sigma)
\,\mathrm{d}s\biggr)\xi(\sigma).
\end{split}
\end{align} 
Of course these quantities are well-defined independently of all the
estimates of 
Conditions \ref{cond:12.6.2.21.13c} and \ref{cond:12.6.2.21.13bb}.  We remark that formulas like
\eqref{eq:gen1B} in the context of Schr\"odinger operators are
referred to as (zeroth order) WKB-approximations. 

\begin{thm}
  \label{thm:char-gener-eigenf-1}
Suppose Condition~\ref{cond:12.6.2.21.13bb}.
Then for any $\lambda \in\mathcal I$  the following assertions hold.
\begin{subequations}
  \begin{enumerate}[(i)]
  \item\label{item:14.5.13.5.40} For any one of $\xi_\pm \in \vG$ or $\phi\in \vE_\lambda$ the
    two  other  quantities in $\{\xi_-,\xi_+,  \phi\}$ uniquely exist
    such that
    \begin{align}\label{eq:gen1}
      \phi -\phi^+[\xi_+]+\phi^-[\xi_-]\in
      B_0^*.
    \end{align}

  \item \label{item:14.5.13.5.41} The  correspondences  in  \eqref{eq:gen1} are  given  by  the
    formulas (recall \eqref{eq:160126})
    \begin{align}\label{eq:aEigenfw}
     \phi&=\i F^\pm(\lambda)^*\xi_\pm,\quad \xi_+=S(\lambda)\xi_-,\\
\xi_\pm&=2^{-1} \vGlim_{R\to \infty}
-\!\!\!\!\!\!\int_R \e^{\i(r-r_0)(\tilde A^{\rm ex }\mp\tilde
      b^{\rm ex })}
\bigl [{b^{-1/2}}(  A\pm b)\phi\bigr]_{|S_r}\,\d r.\label{eq:aEigenfwB}
    \end{align}
    In  particular  the  wave  matrices  $F^\pm(\lambda)^*\colon\vG\to
    \vE_\lambda$ are linear isomorphisms.

  \item\label{item:14.5.13.5.42}  The wave matrices
    $F^\pm(\lambda)^*\colon\vG\to \vE_\lambda\,(\subseteq B^*)$
    are bi-continuous. In fact
    \begin{align}\label{eq:aEigenf2w}
      2\|\xi_\pm\|_{\vG}^2=\lim_{R\to \infty}R^{-1}\int_{B_{2R}\setminus B_{R}}
      |{b}^{1/2}\phi|^2\,(\det g)^{1/2}\d x.
    \end{align}

  \item\label{item:14.5.14.4.17}   The   operators    $F^\pm(\lambda)\colon   B\to   \vG$   and
    $\delta(H-\lambda)\colon B\to \vE_\lambda$ are onto.
  \end{enumerate}
   \end{subequations}

\end{thm}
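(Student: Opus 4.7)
The strategy is to build the characterization on four ingredients already developed in the paper: the radiation condition bounds (Corollary~\ref{cor:radiation-conditions}), Sommerfeld's uniqueness (Corollary~\ref{cor:13.9.9.8.23}), the unitary diagonalization (Theorem~\ref{thm:dist-four-transf}) together with the Plancherel identity \eqref{eq:fund}, and Rellich's theorem (Theorem~\ref{thm:13.6.20.0.10}). The whole argument pivots on the asymptotic identification $R(\lambda\pm\i 0)\psi-\phi^\pm[F^\pm(\lambda)\psi]\in B_0^*$ for $\psi\in B$, which I would prove first. Factoring $R(\lambda\pm\i 0)\psi$ by the WKB phase appearing in $\phi^\pm[\cdot]$ of \eqref{eq:gen1B} produces precisely the $\vG$-valued function $\xi(r)$ of \eqref{eq:defRes}--\eqref{eq:160126}. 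The radiation bound $(A\mp a)R(\lambda\pm\i 0)\psi\in B_0^*$ from Corollary~\ref{cor:radiation-conditions} forces the $r$-derivative of $\xi(r)$ into $B_0^*$, and combined with the convergence $\xi(r)\to F^\pm(\lambda)\psi$ in $\vG$ from the very definition \eqref{eq:disF} this yields the claimed $B_0^*$-asymptotic.

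Given this identification, parts (\ref{item:14.5.13.5.40})--(\ref{item:14.5.13.5.41}) for eigenfunctions $\phi\in\Ran\delta(H-\lambda)$ are almost immediate. Writing $\phi=\delta(H-\lambda)\psi=(2\pi\i)^{-1}(R(\lambda+\i 0)-R(\lambda-\i 0))\psi\in\vE_\lambda$ and setting $\xi_\pm:=(2\pi\i)^{-1}F^\pm(\lambda)\psi$ gives the decomposition \eqref{eq:gen1} directly, the scattering identity $\xi_+=S(\lambda)\xi_-$ is just \eqref{eq:1S}, and $\phi=\i F^\pm(\lambda)^*\xi_\pm$ comes out of $F^\pm(\lambda)^*F^\pm(\lambda)=2\pi\delta(H-\lambda)$, the polarization of \eqref{eq:fund}. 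Uniqueness of $\phi$ given $\xi_\pm$ is forced by Rellich's theorem, since two candidates would differ by an element of $\vE_\lambda\cap B_0^*$ which must vanish. Uniqueness of $\xi_\pm$ given $\phi$, together with the inversion formula \eqref{eq:aEigenfwB}, I would obtain by applying $b^{-1/2}(A\pm b)$ to the asymptotic decomposition: the opposite-sign radiation condition annihilates $\phi^\mp[\xi_\mp]$ modulo $B_0^*$, while on $\phi^\pm[\xi_\pm]$ the operator produces $\pm 2$ times the conjugate WKB phase times $\xi_\pm$; translation by $\e^{\i(r-r_0)(\tilde A^{\mathrm{ex}}\mp\tilde b^{\mathrm{ex}})}$ strips that phase, and Ces\`aro averaging in $r$ then converges in $\vG$ to $\pm 2\xi_\pm$.

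For (\ref{item:14.5.14.4.17}) I would combine the density $C^\infty_{\mathrm c}(S)\subseteq\Ran F^\pm(\lambda)$ quoted just above the theorem with the norm equivalence \eqref{eq:aEigenf2w} of (\ref{item:14.5.13.5.42}), which together close the range of $F^\pm(\lambda)\colon B\to\vG$ and hence make it onto; the surjectivity $\delta(H-\lambda)\colon B\to\vE_\lambda$ then follows by applying $\delta(H-\lambda)$ to a $\psi\in B$ with $F^\pm(\lambda)\psi=2\pi\i\xi_\pm$ for the $\xi_\pm$ attached to any given $\phi\in\vE_\lambda$ via (\ref{item:14.5.13.5.40}). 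The norm formula \eqref{eq:aEigenf2w} itself I would derive by substituting $\phi\approx\phi^+[\xi_+]-\phi^-[\xi_-]$ into $|b^{1/2}\phi|^2$: the explicit shape \eqref{eq:gen1B} together with the Jacobian in \eqref{eq:trans2} reduces the diagonal contributions to $\eta_\lambda(r)|\xi_\pm(\sigma)|^2\,\d\tilde{\mathcal A}\,\d r$, which integrated over $B_{2R}\setminus B_R$ and divided by $R$ tend to $\|\xi_\pm\|_\vG^2$; the cross term carries the rapidly oscillating factor $\exp(\pm 2\i\int_{r_0}^r\tilde b\,\d s)$ and averages out by Riemann--Lebesgue, yielding the prefactor $2$ in \eqref{eq:aEigenf2w} via $\|\xi_+\|_\vG=\|\xi_-\|_\vG$.

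The hardest step is expected to be justifying the averaging arguments underlying both the inversion formula in (\ref{item:14.5.13.5.41}) and the cross-term vanishing in (\ref{item:14.5.13.5.42}): one needs uniform control as $r\to\infty$ and $\sigma\in S$ of the oscillating factor $\exp(\pm 2\i\int_{r_0}^r\tilde b\,\d s)$. This is precisely the role of the hypothesis \eqref{eq:13.9.23.16.41b2} on the spherical gradient of the radial WKB phase: it ensures that the stationary-phase cancellation survives in the strong topology of $\vG$ rather than merely in a pointwise or weaker sense.
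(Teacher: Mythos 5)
This theorem appears in the paper only as a reviewed statement: it belongs to Subsection~\ref{subsec:1609141550}, ``Review of results from \cite{IS3}'', and no proof of it is given here (the paper refers the reader to \cite{IS3}). So there is nothing in this source to compare your argument against line by line, and I can only assess the proposal on its own logical coherence.

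Your proposal assembles the right ingredients and gets the easy directions right, but there is a genuine circularity where the argument touches the ``hard'' direction of part~(\ref{item:14.5.13.5.40}) and part~(\ref{item:14.5.14.4.17}). Concretely: you establish the correspondence $\xi_\pm\leftrightarrow\phi$ only for $\phi$ already of the form $\delta(H-\lambda)\psi$ with $\psi\in B$ (and by density/continuity for $\xi_\pm\in\vG$ starting from $C^\infty_{\mathrm c}(S)\subseteq\Ran F^\pm(\lambda)$). Yet for part~(\ref{item:14.5.14.4.17}) you argue surjectivity of $\delta(H-\lambda)\colon B\to\vE_\lambda$ ``by applying $\delta(H-\lambda)$ to a $\psi\in B$ with $F^\pm(\lambda)\psi=2\pi\i\xi_\pm$ for the $\xi_\pm$ attached to any given $\phi\in\vE_\lambda$ via~(\ref{item:14.5.13.5.40})''. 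But no $\xi_\pm$ has been attached to a \emph{general} $\phi\in\vE_\lambda$ at that stage — only to those $\phi$ that are already known to lie in $\Ran\delta(H-\lambda)$. The decisive step is precisely to show that for an arbitrary $\phi\in \vN\cap B^*$ with $(H-\lambda)\phi=0$ the averaged limit in \eqref{eq:aEigenfwB} converges in $\vG$. Your derivation of \eqref{eq:aEigenfwB} only shows that \emph{if} the decomposition \eqref{eq:gen1} holds, then the formula recovers $\xi_\pm$ (uniqueness); it does not produce the decomposition. To fill the gap one needs an independent a~priori argument for a general $\phi\in\vE_\lambda$ — typically: a local elliptic estimate turning the $B^*$-bound into a Besov bound on $p\phi$, then a conservation/flux identity across $r$-spheres together with the compactness of $\chi_nP_H(I)$ (Condition~\ref{cond:12.6.2.21.13b}) and the radiation bounds to extract a Cauchy property for the translated Ces\`aro averages of $b^{-1/2}(A\pm b)\phi$. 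Only after that does the uniqueness-plus-bootstrapping you describe close the loop and deliver surjectivity.

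A smaller but related soft spot: the closed-range claim for $F^\pm(\lambda)\colon B\to\vG$ does not follow directly from \eqref{eq:aEigenf2w} as you present it. That identity yields $\|\xi_\pm\|_\vG\lesssim\|\phi\|_{B^*}$; the reverse bound (bi-continuity, which is what would close the range via the lower bound on $F^\pm(\lambda)^*$) needs \eqref{eq:gen1} plus the boundedness of $\xi\mapsto\phi^\pm[\xi]$ on $\vG\to B^*$ \emph{and} a control of the $B^*$-norm of the $B_0^*$-remainder — for example via an elliptic estimate for $\vE_\lambda$ on compact sets — which you do not mention. I would also flag that the cross-term ``Riemann--Lebesgue'' step in your derivation of \eqref{eq:aEigenf2w} is more delicate than the standard scalar version suggests: what is needed is $\vG$-valued cancellation uniformly over $\sigma$, and you correctly identify \eqref{eq:13.9.23.16.41b2} as the relevant hypothesis, but the argument must also account for the $r$-dependence of the density factor in \eqref{eq:trans2} and for the failure of uniform oscillation near $r=r_\lambda$, so a plain Riemann--Lebesgue invocation is too optimistic as stated.
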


Finally we give  an  application of our results to 
channel scattering theory addressed,  but
treated very differently,  in \cite{HPW}.
Suppose $M^\mathrm{ex}$ has $N\geq 2$ number of ends, i.e.\  
$E^\mathrm{ex}=\{x\in M^\mathrm{ex}\,|\, r^\mathrm{ex}(x)>r_0\}$ has
$N\geq 2$ components $E_i$, $i=1,\dots,N$. (Note that this implies that
$ E_i\cap M$, $i=1,\dots,N$, are the components of $E= E^\mathrm{ex}\cap M$.)
Then the Hilbert space $\mathcal G$ splits as 
\begin{align*}
\mathcal G=\mathcal G_1\oplus\dots\oplus\mathcal G_N;\quad \mathcal G_i=L^2(S_i),\ 
S_i=S\cap \overline{E_i},
\end{align*}
and, accordingly, the scattering matrix $S(\lambda)$ has a matrix representation
\begin{align*}
S(\lambda)=(S_{ij}(\lambda))_{1\leq i,j\leq N},\quad
S_{ij}(\lambda)\in \mathcal B(\mathcal G_j,\mathcal G_i).
\end{align*}
\begin{corollary}\label{cor:transmission}
Suppose Condition~\ref{cond:12.6.2.21.13bb},
and that $E^\mathrm{ex}$ has $N$ number of ends.  
For any $\lambda\in\mathcal I$
let the scattering matrix $S(\lambda)$ be 
decomposed into components as above. 
Then the off-diagonal components, $S_{ij}(\lambda)$ with  $i\neq j$, are
  one-to-one mappings.
\end{corollary}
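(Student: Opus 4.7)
The plan is to reduce the injectivity of $S_{ij}(\lambda)$ to a uniqueness question for minimum generalized eigenfunctions, and then invoke a one-end refinement of Rellich's theorem followed by strong unique continuation.

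Fix $i\neq j$ and assume $\xi_-\in\vG_j$ satisfies $S_{ij}(\lambda)\xi_-=0$. Set $\xi_+:=S(\lambda)\xi_-\in\vG$. Both $\xi_-$ and $\xi_+$ have vanishing $\vG_i$-component: the former because $i\neq j$ and $\xi_-\in\vG_j$, the latter by hypothesis. Inspecting the explicit formula \eqref{eq:gen1B}, the approximate eigenfunctions $\phi^\pm[\xi_\pm]$ therefore vanish identically on the end $E_i$. Next define $\phi:=\i F^-(\lambda)^*\xi_-\in\vE_\lambda$, which is well-defined by Theorem~\ref{thm:char-gener-eigenf-1}. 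The asymptotic relation \eqref{eq:gen1}, combined with the vanishing of $\phi^\pm[\xi_\pm]$ on $E_i$, yields $\bar\chi_m\phi\in B_0^*$ on $E_i$ for all $m$ large enough. The goal becomes to promote this one-end $B_0^*$ decay to the global conclusion $\phi\equiv 0$; once this is done, $\xi_-=0$ follows from the fact that $F^-(\lambda)^*$ is a linear isomorphism.

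The central and most delicate step is a one-end refinement of Theorem~\ref{thm:13.6.20.0.10}: a minimum generalized eigenfunction that is $B_0^*$ on a single end $E_i$ at infinity must vanish on $E_i$ for large $r$. I would prove this by revisiting the Mourre-type commutator argument behind Theorem~\ref{thm:13.6.20.0.10} and localizing it to a neighborhood of $E_i$ through a smooth radial cutoff $\chi_i$ supported on $E_i\cap\{r\geq R\}$; the commutator errors $[H,\chi_i]$ are of lower order and supported in a compact annulus, so they do not disturb the Mourre balance on $E_i$. Because Conditions~\ref{cond:12.6.2.21.13}--\ref{cond:10.6.1.16.24} are formulated asymptotically in $r$, the quantitative estimates underlying the Rellich argument remain in force end-by-end. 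This localization step is the main obstacle and requires careful bookkeeping of the error terms produced by $\chi_i$, in particular their compatibility with the positive commutator $i[H,A]$ restricted to $E_i$.

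Finally, once $\phi$ is known to vanish on an open subset of $E_i$, the classical strong unique continuation principle for the second-order elliptic operator $H-\lambda$ on the connected manifold $M$ forces $\phi\equiv 0$ on $M$. Since Theorem~\ref{thm:char-gener-eigenf-1} identifies $F^-(\lambda)^*\colon\vG\to\vE_\lambda$ as a linear isomorphism, this gives $\xi_-=0$, establishing the claimed injectivity of $S_{ij}(\lambda)$.
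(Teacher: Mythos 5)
The corollary is reviewed from \cite{IS3} and no proof is given in the present paper, so there is no in-text proof to compare against; I therefore assess the proposal on its own merits. Your overall reduction is correct and appears to match the stationary methodology the paper points to: from $\xi_-\in\vG_j$ with $S_{ij}(\lambda)\xi_-=0$, the vector $\xi_+=S(\lambda)\xi_-$ has vanishing $\vG_i$-component, so by \eqref{eq:gen1B} both $\phi^\pm[\xi_\pm]$ vanish identically on $E_i$, and then \eqref{eq:gen1} forces $\bar\chi_m\phi\in B_0^*$ on $E_i$ for the eigenfunction $\phi=\i F^-(\lambda)^*\xi_-\in\vE_\lambda$; once $\phi\equiv 0$ is established, $\xi_-=0$ follows from Theorem~\ref{thm:char-gener-eigenf-1}~(\ref{item:14.5.13.5.41}).

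Two remarks on the remaining steps. First, the ``one-end Rellich'' localization is actually cleaner than your sketch suggests: since $E_i$ is a connected component of $\{r>r_0\}$, the indicator $1_{E_i}$ is locally constant there and carries no derivatives on the support of any $\bar\chi_m$ with $R_m>r_0$; thus the only cutoff errors in the commutator argument behind Theorem~\ref{thm:13.6.20.0.10} are the usual radial ones in the compact annulus $\{R_m<r<2R_m\}\cap E_i$, exactly as in the global case. The one-end statement therefore really is a routine, if unverified, adaptation rather than an independent obstacle. Second, once $\phi$ vanishes on the open set $E_i\cap\{r>R'\}$, it is \emph{weak} unique continuation (vanishing on an open set implies global vanishing on the connected manifold $M$) that you need, not strong unique continuation; the latter is overkill and carries stronger hypotheses. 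With these caveats the argument is sound. A plausible alternative in the spirit of the paper would be to use, after establishing superpolynomial decay on $E_i$, the characterization in Corollary~\ref{cor:13.9.9.8.23} with $\psi=0$ on a one-ended extension of $E_i$, rather than appealing to a full Rellich-type vanishing theorem; but the conclusion would be the same.
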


We note that Corollary \ref{cor:transmission} may be seen as a
stationary solution to conjectures of \cite{HPW}, see \cite[Remark
5.7]{HPW}. We shall develop the time-dependent version of this
result,  more directly
addressing  conjectures of \cite{HPW}.

\section{Main results}\label{subsec:result}

We present our results deferring the main parts of the proofs  to Section~\ref{subsec:Time-dependent wave operator}.

\subsection{General results for long-range models}

According to the arguments of Subsection~\ref{subsec:1609141550}
we introduce the following \textit{free comparison dynamics} 
defined on $\widetilde{\mathcal H}_{\mathcal I}$ in \eqref{eq:16091616},
employing the purely outgoing/incoming approximate  
generalized eigenfunctions \eqref{eq:gen1B}:
For any $t\geq0$ and $h\in C^1_{\mathrm c}(\mathcal I\times S)
\subseteq \widetilde{\mathcal H}_{\mathcal I}$
we let 
\begin{align}
  \begin{split}
U^\pm(t)h
  =(\pm2\pi\i)^{-1}
  \int_{\mathcal I} \e^{\mp \i t\lambda}\phi^\pm_\lambda[h(\lambda,{}\cdot{})]\,\d \lambda.
\label{eq:14.3.12.20.22}  
  \end{split}
\end{align} 
In Lemma~\ref{lem:14.4.29.22.28b} we will see that $U^\pm(t)h\in
\mathcal H$ for any $t\geq 0$ and $h\in C^1_{\mathrm c}(\mathcal
I\times S)$. A consequence of Lemma \ref{lem:14.4.29.22.28} is that
these dynamics 
 are  asymptotically
isometric, i.e. $\lim_{t\to \infty}
 \|U^\pm(t)h\|_{\vH}=\|h\|_{\widetilde{\mathcal
     H}_{\mathcal I}}$. We also note the time reversal invariance property
 $\overline{U^+(t)h}=U^-(t)\bar h$.

\begin{thm}\label{thm:14.3.12.18.20}
Suppose Condition~\ref{cond:12.6.2.21.13bb}.
Then for any $h\in C^2_{\mathrm c}(\mathcal I\times S)$
there exist the  limits
\begin{align}
  \label{eq:53}
  W^\pm h:=\lim_{t\to \infty}\e^{\pm\i tH}U^\pm(t)h\quad \mbox{in }\mathcal H.
\end{align}
These limits $W^\pm$ extend uniquely to unitary  operators 
$W^\pm\colon \widetilde{\mathcal H}_{\mathcal I}\to\mathcal H_{\mathcal I}$,
and 
\begin{align}\label{eq:54}
  F^\pm=(W^\pm)^*,
\end{align}
respectively.
 Whence  the wave operators 
$W^\pm$  are complete on 
$\mathcal I$.
\end{thm}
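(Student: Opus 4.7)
The overall strategy is Cook--Kuroda for existence of the limits, the asymptotic isometry of the approximate dynamics $U^\pm$ for boundedness and isometric extensions, and the characterization of generalized eigenfunctions in Theorem~\ref{thm:char-gener-eigenf-1} for the identification of $W^\pm$ with $(F^\pm)^*$.

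\textbf{Existence.} For $h\in C^2_{\mathrm c}(\mathcal I\times S)$ I differentiate \eqref{eq:14.3.12.20.22} in $t$ and obtain
\begin{align*}
\tfrac{\d}{\d t}\bigl(\e^{\pm\i tH}U^\pm(t)h\bigr)
=(2\pi)^{-1}\e^{\pm\i tH}\int_{\mathcal I}\e^{\mp\i t\lambda}(H-\lambda)\phi^\pm_\lambda[h(\lambda,{}\cdot{})]\,\d\lambda.
\end{align*}
By the construction of the approximate phase $a$ in \eqref{eq:13.9.5.7.23}, which solves the Riccati equation \eqref{eq:15.3.11.19.35} up to a radially decaying remainder, the function $(H-\lambda)\phi^\pm_\lambda[\xi]$ consists of WKB error terms whose decay in $r$ is controlled by Conditions~\ref{cond:12.6.2.21.13a}, \ref{cond:12.6.2.21.13bbb} and \ref{cond:12.6.2.21.13bb}. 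To convert this into an $L^1$-in-$t$ bound on the $\mathcal H$-norm I integrate by parts in $\lambda$, using the $C^2$-regularity and compact support of $h$ to generate powers of $t^{-1}$ and to eliminate boundary terms; the $\lambda$-derivatives of the WKB amplitude and phase produce factors polynomial in $r$, but these are balanced against the radial decay of the Riccati remainder, exploiting the strengthened exponents $\min\{\sigma,\tau,\rho\}>2$ of Condition~\ref{cond:12.6.2.21.13bb}. This yields $\|\tfrac{\d}{\d t}(\e^{\pm\i tH}U^\pm(t)h)\|_{\mathcal H}\le C_h(1+t)^{-1-\delta}$ for some $\delta>0$. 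This step is the technical heart of the proof and the main obstacle, essentially a bookkeeping of estimates already developed for Theorems~\ref{thm:strong} and \ref{thm:dist-four-transf}.

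\textbf{Isometry and range.} The asymptotic isometry $\|U^\pm(t)h\|_{\mathcal H}\to\|h\|_{\widetilde{\mathcal H}_{\mathcal I}}$ noted after \eqref{eq:14.3.12.20.22} combined with unitarity of $\e^{\pm\i tH}$ gives $\|W^\pm h\|_{\mathcal H}=\|h\|_{\widetilde{\mathcal H}_{\mathcal I}}$ on the dense domain $C^2_{\mathrm c}(\mathcal I\times S)\subseteq \widetilde{\mathcal H}_{\mathcal I}$, so $W^\pm$ uniquely extends to an isometry $\widetilde{\mathcal H}_{\mathcal I}\to\mathcal H$. That the range lies in $\mathcal H_{\mathcal I}=P_H(\mathcal I)\mathcal H$ follows by testing with $P_H(\mathbb R\setminus \mathcal I)$ and performing an integration by parts in $\lambda$, now paired with the spectral resolution of $H$, to show $P_H(\mathbb R\setminus\mathcal I)U^\pm(t)h\to 0$ in $\mathcal H$; here $h$ being supported in a compact subset of $\mathcal I$ is crucial.

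\textbf{Identification with $(F^\pm)^*$.} Since $F^\pm\colon\mathcal H_{\mathcal I}\to\widetilde{\mathcal H}_{\mathcal I}$ is unitary by Theorem~\ref{thm:dist-four-transf} and $W^\pm$ is an isometry into $\mathcal H_{\mathcal I}$, the identity \eqref{eq:54} is equivalent to $F^\pm W^\pm h=h$ for $h$ in the dense set $C^2_{\mathrm c}(\mathcal I\times S)$. Using the intertwining $F^\pm H_{\mathcal I}=M_\lambda F^\pm$ and commuting $F^\pm$ with the limit in $t$, this reduces to the asymptotic comparison $F^\pm U^\pm(t)h\to \e^{\mp\i tM_\lambda}h$ in $\widetilde{\mathcal H}_{\mathcal I}$ as $t\to\infty$. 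I would verify this at fixed $\lambda\in\mathcal I$ using the averaged formula \eqref{eq:extbF2} (or equivalently \eqref{eq:aEigenfwB}) applied to $W^\pm h$: inserting the WKB ansatz \eqref{eq:gen1B} and exchanging the $r$- and $\lambda$-integrals, one recovers $h(\lambda,{}\cdot{})$ from the diagonal contribution $\mu=\lambda$ by stationary phase in $\mu$, while the opposite-sign (incoming vs.\ outgoing) cross term is nonstationary and vanishes in the limit. Equivalently, this verifies that $W^\pm h$ has asymptotic form \eqref{eq:gen1} with outgoing/incoming datum $h(\lambda,{}\cdot{})$ at energy $\lambda$, so by the uniqueness in Theorem~\ref{thm:char-gener-eigenf-1}\eqref{item:14.5.13.5.40}--\eqref{item:14.5.13.5.41} we get $F^\pm(\lambda)W^\pm h=h(\lambda,{}\cdot{})$, whence \eqref{eq:54}. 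Completeness of $W^\pm$ on $\mathcal I$ follows immediately from \eqref{eq:54} and the unitarity of $F^\pm$.
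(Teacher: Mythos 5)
Your proposal has the right overall skeleton — Cook--Kuroda for existence, asymptotic isometry of $U^\pm$ for the extension, and a stationary-theory link for $F^\pm=(W^\pm)^*$ — which matches the paper's structure at a bird's-eye level. However, there are genuine gaps in two of the three steps.

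\textbf{Existence.}\quad You propose to differentiate \eqref{eq:14.3.12.20.22} directly and integrate by parts in $\lambda$ to produce powers of $t^{-1}$. This cannot work as stated: the phase $\Theta(\lambda,t,r,\sigma)$ defined in \eqref{eq:160920} has a non-degenerate stationary point at $\lambda=\lambda_c(t,r,\sigma)$ precisely in the classically allowed region $r\sim ct$, where $\partial_\lambda\Theta$ vanishes and integration by parts blows up rather than gaining decay. The actual proof (Lemma~\ref{lem:14.4.29.22.28}) first extracts the stationary-phase leading term $U^\pm_0(t)h$ given by \eqref{eq:1609220} — a single non-oscillatory amplitude satisfying the clean first-order ODE $\tfrac{\d}{\d t}U^\pm_0 h=-\i G^\pm(t)U^\pm_0 h$ of \eqref{eq:14.5.5.12.29B} — shows $U^\pm(t)h = U^\pm_0(t)h+O_{\vH}(t^{-1/8})$, and only then applies Cook--Kuroda to $U^\pm_0(t)h$. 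This reduction is essential, not a shortcut. Moreover you invoke "the strengthened exponents $\min\{\sigma,\tau,\rho\}>2$", but Condition~\ref{cond:12.6.2.21.13bb} is an \emph{either/or}: alternative~\eqref{item:14.5.1.8.31} allows $\min\{\sigma,\tau,\rho\}>1$ and compensates by second-derivative bounds \eqref{eq:smootrA}--\eqref{eq:smootr}. The paper handles both cases; your argument only addresses \eqref{item:14.5.1.8.30}, and the estimate of $LU^\pm_0 h$ in case \eqref{item:14.5.1.8.31} via the identity \eqref{eq:17616945} and the geometric Lemma~\ref{lemma:L_10b} is the technically hardest part of the proof. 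You should also note that the paper needs an energy cutoff $\bar\chi_n(H+C)$ (the bound \eqref{eq:energyL}) to make the Cook integrand manageable; this is missing from your account.

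\textbf{Identification.}\quad Your strategy of showing $F^\pm(\lambda)W^\pm h = h(\lambda,\cdot)$ via Theorem~\ref{thm:char-gener-eigenf-1} or the averaged formula \eqref{eq:extbF2} runs into a type mismatch: those results apply to $B$-class vectors (on which $F^\pm(\lambda)$ is defined pointwise) or $B^*$-class generalized eigenfunctions, whereas $W^\pm h$ is merely an $L^2$ vector. It is not clear that $W^\pm h\in B$, and $\delta(H-\lambda)W^\pm h$ is not directly accessible through the $B\to B^*$ machinery. The paper's Lemma~\ref{lem:14.5.8.13.51} instead computes $\inp{\psi,W^+ h}$ for $\psi\in\mathcal H_{1+}\cap\vH^1$ via an Abel-limit representation, writes $\mathrm i\epsilon R(\lambda-\mathrm i\epsilon)=I-R(\lambda-\mathrm i\epsilon)(H-\lambda)$, integrates by parts using the spherical decomposition and the radiation-condition bounds (Theorem~\ref{prop:radiation-conditions}), and arrives at \eqref{eq:idenbas}. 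This derivation stays entirely within the resolvent calculus; it avoids the domain issues your sketch encounters, and — notably — it only needs Condition~\ref{cond:12.6.2.21.13c}, not the unitarity of $F^\pm$ from Theorem~\ref{thm:dist-four-transf}, which is why Corollary~\ref{cor:14.5.10.7.30} can derive that unitarity as an output rather than an input (this flexibility is exactly what makes Theorem~\ref{thm:14.3.12.18.20srb} work without Condition~\ref{cond:12.6.2.21.13bb}).

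The isometry and range step of your proposal is essentially correct and close to the paper's argument, although the paper obtains $\Ran W^+\subseteq\mathcal H_{\mathcal I}$ as a by-product of \eqref{eq:idenbas} with a suitable $f(H)$ factor rather than by a direct estimate on $P_H(\R\setminus\mathcal I)U^\pm(t)h$.
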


In the proof of  Theorem~\ref{thm:14.3.12.18.20} 
  we will use a more simplified free dynamics, 
which comes about as  the leading term of \eqref{eq:14.3.12.20.22} 
extracted by the stationary phase argument, see Lemma~\ref{lem:14.4.29.22.28}. 
We also note that for so-called short-range and Dollard type
models there are further  simplified dynamics for which the
analogues of Theorem~\ref{thm:14.3.12.18.20} hold. 
This will be discussed later in this section.

Now let us apply Theorem~\ref{thm:14.3.12.18.20} to one aspect of  channel scattering theory.
In the $N$-ends setting as in Corollary \ref{cor:transmission} let us consider
the orthogonal projections
\begin{align}
  P^\pm_i=W^\pm 1_{\mathcal I\times S_i}(W^\pm)^*;\quad i=1,\dots,N.
    \label{eq160917}
\end{align}
Clearly these projections non-trivially resolve the 
space $\mathcal H_{\mathcal I}$: 
\begin{align*}
P^\pm_i\neq 0\text{ for all }i,\quad 
P^\pm_iP^\pm_j=0\text{ for all }i\neq j, \quad
P_1^\pm+\dots+P_N^\pm=I\text{ on }\vH_{\vI}.
\end{align*}
It is not difficult to see by the stationary phase method (or by using
\eqref{eq:twosidedest}, \eqref{eq:1609220}  and \eqref{eq:14.5.9.12.13BB} 
directly) that 
these projections have dynamical representations 
\begin{align*}
  P^\pm_i=\slim_{t\to \infty}e^{\pm \i tH}1_{E_i\cap M}e^{\mp \i
    tH}P_H(\vI),
\end{align*}
and hence we can interpret that $P^\pm_i$ describe the initial/final state
components of wave packets that 
go to/come from the $i$-th end $E_i\cap M$, respectively.
Then the following corollary says that any  wave packet coming from one end,
say the $j$-th end, always enters  all the others.

\begin{corollary}\label{cor:transmission2}
 Suppose the conditions of Corollary \ref{cor:transmission},
and let $P^\pm_i$, $i=1,\dots,N$, be defined by 
\eqref{eq160917}.
For any $i,j\in\{1,\dots,N\}$ with $i\neq j$
and for any nonzero $\psi\in P^-_j\vH_{\vI}$
one has $P^+_i\psi\neq0$.
\end{corollary}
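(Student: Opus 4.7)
\medskip

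\noindent\textbf{Proof plan for Corollary \ref{cor:transmission2}.} The idea is to transfer everything through the Fourier transforms $F^\pm$ and then reduce the assertion to the fibrewise injectivity statement already obtained in Corollary \ref{cor:transmission}. By Theorem \ref{thm:14.3.12.18.20}, $W^\pm$ are unitary from $\widetilde{\mathcal H}_{\mathcal I}$ to $\mathcal H_{\mathcal I}$, and $W^\pm=(F^\pm)^*$. In particular the operator
\begin{align*}
  (W^+)^*W^- =F^+(F^-)^*
\end{align*}
is well defined and unitary on $\widetilde{\mathcal H}_{\mathcal I}$. Using \eqref{eq:1S} and the fact that $F^-$ is unitary with dense range $B\cap \mathcal H_{\mathcal I}$ of validity, I would first verify that $F^+(F^-)^*$ coincides with the direct-integral operator of multiplication by the scattering matrix $S(\lambda)$ on $\widetilde{\mathcal H}_{\mathcal I}=L^2(\mathcal I,(2\pi)^{-1}\d\lambda;\mathcal G)$.

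Given $\psi\in P_j^-\mathcal H_{\mathcal I}$ set $\xi=(W^-)^*\psi=F^-\psi\in\widetilde{\mathcal H}_{\mathcal I}$. The defining formula \eqref{eq160917} and the characterization of $P_j^-$ show that $\xi(\lambda)\in\mathcal G_j$ for a.e.\ $\lambda\in\mathcal I$, and that $\psi\neq0$ is equivalent to $\xi\neq0$. Plugging in, one gets
\begin{align*}
  P_i^+\psi=W^+1_{\mathcal I\times S_i}(W^+)^*W^-\xi=W^+\,1_{\mathcal I\times S_i}\,S(\cdot)\xi,
\end{align*}
so $P_i^+\psi=0$ (using the unitarity of $W^+$) is equivalent to
\begin{align*}
1_{S_i}S(\lambda)\xi(\lambda)=S_{ij}(\lambda)\xi(\lambda)=0 \quad\text{for a.e.}\ \lambda\in\mathcal I,
\end{align*}
the last equality because $\xi(\lambda)$ lives in the $j$-th block only.

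Now Corollary \ref{cor:transmission} ensures that for every $\lambda\in\mathcal I$ and $i\neq j$ the block $S_{ij}(\lambda)\colon\mathcal G_j\to\mathcal G_i$ is injective; moreover the map $\lambda\mapsto S(\lambda)$ is strongly continuous. Hence $S_{ij}(\lambda)\xi(\lambda)=0$ for a.e.\ $\lambda$ forces $\xi(\lambda)=0$ for a.e.\ $\lambda$, i.e.\ $\xi=0$, contradicting $\psi\neq0$. Therefore $P_i^+\psi\neq0$, as required.

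The only non-routine point in this scheme is making rigorous the identification $(W^+)^*W^-=S(\cdot)$ as a multiplication operator on $\widetilde{\mathcal H}_{\mathcal I}$; this is essentially a density/continuity argument from \eqref{eq:1S}, Proposition \ref{prop:dist-four-transf} and Theorem \ref{thm:dist-four-transf}, and it is the step I would write with the most care. Beyond that, the corollary reduces cleanly to the already proved fibrewise injectivity of the off-diagonal scattering components.
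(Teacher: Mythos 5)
Your argument is correct and is essentially the same as the paper's: write $\psi=W^-\xi$ with $\xi$ supported fiberwise in $\mathcal G_j$, use $F^+=(W^+)^*$ and $F^+=S(\cdot)F^-$ to rewrite $P_i^+\psi=W^+S_{ij}(\cdot)\xi$, and conclude from the injectivity of $S_{ij}(\lambda)$ (Corollary \ref{cor:transmission}) together with the unitarity of $W^+$. The identification of $(W^+)^*W^-$ with multiplication by $S(\lambda)$ that you flag as the delicate step is exactly what the paper also uses, via \eqref{eq:1S} and the already-established unitarity and density statements, so no genuinely new input is required.
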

\begin{proof}
Let $i\neq j$, and suppose $0\neq \psi\in P^-_j\vH_{\vI}$. 
By Theorem~\ref{thm:14.3.12.18.20} we have
\begin{align*}
P^+_i\psi
&=
P^+_iP^-_j\psi
\\&=W^+ 1_{\mathcal I\times S_i}F^+W^- 1_{\mathcal I\times S_j}(W^-)^*\psi
\\&
=W^+ 1_{\mathcal I\times S_i}S(\cdot)F^-W^- 1_{\mathcal I\times S_j}(W^-)^*\psi
\\&
=W^+ S_{ij}(\cdot)\parb{(W^-)^*\psi}_j,
\end{align*}
from which we can deduce that $P^+_i\psi\neq0$ thanks to Corollary
\ref{cor:transmission}. 
\end{proof}

In the above discussion and proof we identified    `dynamical
transmission' and `spectral transmission'. In the literature a similar
identification has been shown for reflection for $1$-dim Schr\"odinger operators, see \cite{LPPZ} for
a recent contribution.

\subsection{Short-range and Dollard classes of perturbations}
\label{subsec:Short-range and Dollard classes of perturbations}
The conditions in Section~\ref{subsec:preliminary result} are
satisfied for the free Euclidean and hyperbolic spaces, and hence also
for their perturbations to some extent.  In fact perturbations of
long-range types are allowed in Section~\ref{subsec:preliminary
  result}, and the results so far are the most general ones holding
for such long-range models. However in certain more restrictive
settings it is possible to show  versions of Theorem
\ref{thm:14.3.12.18.20} with  considerably simpler comparison dynamics and phase modifiers.

Here let us precisely formulate the notions of short-range and 
Dollard types of  the effective potential
$q$ assuming  Condition~\ref{cond:12.6.2.21.13c}.  We will discuss the corresponding simplifications in the
following subsections.

We introduce the following quantity using spherical coordinates
\begin{align*}
\lambda_0(\sigma)=\limsup_{r\to\infty}q_1(r,\sigma);\quad \sigma\in S.
\end{align*}  
Note that $\lambda_0(\sigma)\leq \lambda_0$ for all $\sigma\in S$.

\begin{subequations}
\begin{defn}\label{def:2014.12.10.7.17}
The effective potential $q=q_1+q_2$ of Condition~\ref{cond:10.6.1.16.24}
is said to be of \emph{short-range type}, 
if there exist $\epsilon,C>0$ such  that uniformly in the spherical
coordinates on $E$
\begin{align}\label{eq:shortcondib}
|q_1(r,\sigma)-\lambda_0(\sigma)|\le Cr^{-1-\epsilon}.
\end{align}
\end{defn}

Note the following consequence of $q$ being of short-range type: Locally uniformly in $\lambda \in \vI$ 
\begin{align}\label{eq:shortcondib2}
\lim_{R\to \infty }\sup_{\sigma\in S_R}\int^\infty_{R}|\tilde b_{\rm
      sr}-\tilde b|(s,\sigma)\,\mathrm{d}s=0;
\end{align} here 
$$\tilde b_{\rm sr}(r,\sigma) :=
  |\d r(r,\sigma)|^{-1}\sqrt{2(\lambda-\lambda_0(\sigma))}\quad
\text{for }r> r_0/2.$$
    Upon replacing
  $ \d r$
  by   $ \d r^{\rm ex}$ we shall use this definition on $E^{\rm ex}$ as well.
 \end{subequations}

The effective potential $q$ not necessarily of short-range type 
is said to be 
of \emph{long-range type}. 
The manifold $(M,g)$ is said to be of 
\emph{short-range} or 
\emph{long-range  type} 
if $q$ is of short-range or long-range type when $V=0$, respectively.
  
\begin{subequations}
\begin{defn}\label{def:2014.12.10.7.17B}
The effective potential $q=q_1+q_2$ of Condition~\ref{cond:10.6.1.16.24}
is said to be of \emph{Dollard type}, 
if there exist $\epsilon,C>0$ such  that uniformly in the spherical
coordinates on $E$
\begin{align}
|q_1(r,\sigma)-\lambda_0(\sigma)|\le Cr^{-(1+\epsilon)/2}.
\label{eq:dollard}
\end{align}
\end{defn}

Note the following consequence of $q$ being of Dollard type: Locally uniformly in $\lambda \in \vI$
\begin{align}\label{eq:dollard2}
\lim_{R\to \infty }\sup_{\sigma\in S_R}\int^\infty_{R}|\tilde b_{\rm
      do}-\tilde b|(s,\sigma)\,\mathrm{d}s=0,
\end{align} 
where 
\begin{align*}
\tilde b_{\rm do}(r,\sigma)
=\tilde b_{\rm sr}(r,\sigma)
\Big (1-\tfrac 12 \tfrac{q_1(r,\sigma)-\lambda_0(\sigma)}{\lambda-\lambda_0(\sigma)}\Big )
\quad\text{for }r> r_0.
\end{align*}  
  Upon replacing $
q_1$ by $
  q_1^{\rm ex}$ (the latter function introduced in
  Condition~\ref{cond:12.6.2.21.13c}) we shall use this definition on $E^{\rm ex}$ as well.
\end{subequations}

Clearly a $q$ of short-range type is also of Dollard type.
The Dollard type is a particular long-range type for which certain
approximations work. We will elaborate on this in  Corollary
\ref{cor:dollard} and Theorem \ref{thm:14.3.12.18.20dol}.  In the
Schr\"odinger operator literature this is usually accredited Dollard,
\cite{Do}. 

We remark that, if $M^{\rm ex}=M$, obviously \eqref{eq:shortcondib2} and
\eqref{eq:dollard2}  follow  from
\eqref{eq:shortcondib} and 
\eqref{eq:dollard}, respectively.

\subsection{Simplification for short-range models}\label{subsec:170531}

Here we discuss simplifications of the main result, Theorem~\ref{thm:14.3.12.18.20},
for a short-range model.
At the end of the subsection we also discuss a comparison with 
the setting of \cite{IS1}.

\subsubsection{Simplification}

Assume that the effective potential $q$ is 
of short-range type. 
For simplicity we also assume 
\begin{align}
|\mathrm d r^{\rm ex}|=1\quad\text{for } r> r_0/2.
\label{eq:1609189}
\end{align}  

Then we set 
\begin{align*}
b_{\mathrm{sr}}(\sigma)
=\tilde b_{\rm sr}(\sigma)
=\sqrt{2(\lambda-\lambda_0(\sigma))},
\end{align*}
and for any 
$\xi\in\mathcal G$ 
\begin{align}\label{eq:gen1Bb}
\begin{split}
   \phi^\pm_{\mathrm{sr}}[\xi](r,\sigma)&= \eta(r)
   b_{\mathrm{sr}}(\sigma)^{-1/2}
   \exp \biggl(
\pm\mathrm i b_{\mathrm{sr}}(\sigma)(r-r_0)
-\tfrac12\int_{r_0}^r\Delta r(s,\sigma)
\,\mathrm{d}s\biggr)\xi(\sigma),
\end{split}
\end{align}
cf.\ \eqref{eq:13.9.5.7.2300} and \eqref{eq:gen1B}.
Note that $b_{\mathrm{sr}}$ is the zeroth order approximation of 
$b$ as $r\to\infty$.

We first present a simplified version of the distorted Fourier transforms.
\begin{corollary}\label{cor:strongbb}
In addition to Condition~\ref{cond:12.6.2.21.13c},
suppose that $q$ is of short-range type,
and that \eqref{eq:1609189} holds.
\begin{enumerate}
\item
For any $\lambda\in\mathcal I$ and $\psi\in B$
there exist $F^\pm_{\mathrm{sr}}(\lambda)\psi\in \mathcal G$ such that 
\begin{align*}
R(\lambda\pm\mathrm i0)\psi-\phi^\pm_{\mathrm{sr}}
[F^\pm_{\mathrm{sr}}(\lambda)\psi]\in B^*_0.
\end{align*}
In addition, $F^\pm_{\mathrm{sr}}(\lambda)\psi$ 
are separately continuous in $\lambda\in \mathcal I$
and $\psi\in \mathcal G$.
\item
The operators 
\begin{align*}
F^\pm_{\mathrm{sr}}=\int_{\mathcal I} \oplus F^\pm_{\mathrm{sr}}(\lambda)\,\d \lambda
\end{align*} 
considered as mappings $B\cap\mathcal
H_{\mathcal I}\to \widetilde{\mathcal H}_{\mathcal I}$ 
extend uniquely
to  isometric  operators
$\mathcal H_{\mathcal I}\to \widetilde{\mathcal H}_{\mathcal I}$. 
%Moreover, they are 
%unitarily diagonalizing transforms for $H_{\mathcal I}$,
%respectively.
\item
The above $F^\pm_{\mathrm{sr}}$ are related to $F^\pm$
as follows:  
\begin{align*}%\label{eq:relatb}
    F_{\rm sr}^\pm=\e^{\mp\i \theta}F^\pm;\quad
    \theta(\lambda,\sigma) = \int^\infty_{r_0}( b_{\rm
      sr}- b)(s,\sigma)\,\mathrm{d}s.
     \end{align*}
\end{enumerate}
\end{corollary}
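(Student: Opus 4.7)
\emph{Proof plan.} The strategy is to define the candidate outright by
\[
F^\pm_{\mathrm{sr}}(\lambda)\psi := \e^{\mp \i\theta(\lambda,{}\cdot{})}\,F^\pm(\lambda)\psi,
\]
which automatically lies in $\vG$ since $\theta$ is real under the short-range bound \eqref{eq:shortcondib} (so $\theta$ is well-defined by dominated convergence, and manifestly a continuous function of $\lambda\in\mathcal I$). Under the simplifying assumption \eqref{eq:1609189} one has $\tilde\eta=1$, $\tilde b=b$, $\tilde\omega=\omega$, $\mathop{\mathrm{div}}\tilde\omega=\Delta r$ and $[2|\d r|^2(\lambda-q_1)]^{-1/4}=b^{-1/2}$ for $r>r_0/2$, which makes $\phi^\pm[\xi]$ and $\phi^\pm_{\mathrm{sr}}[\xi]$ directly comparable. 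The whole of (1) will then reduce to checking
\begin{equation}\label{eq:reduction}
\phi^\pm\bigl[F^\pm(\lambda)\psi\bigr]-\phi^\pm_{\mathrm{sr}}\bigl[\e^{\mp \i\theta}F^\pm(\lambda)\psi\bigr]\in B^*_0,
\end{equation}
because $R(\lambda\pm\i 0)\psi-\phi^\pm[F^\pm(\lambda)\psi]\in B^*_0$ is already built into the construction of $F^\pm(\lambda)$ via \eqref{eq:defRes}--\eqref{eq:160126}: indeed this difference equals $b^{-1/2}\exp(\pm\i\int_{r_0}^r b\,\d s-\tfrac12\int_{r_0}^r\Delta r\,\d s)\bigl(\xi(r)-\eta_\lambda F^\pm(\lambda)\psi\bigr)$, and $\xi(r)\to F^\pm(\lambda)\psi$ in $\vG$ combined with the co-area formula places this in $B^*_0$.

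The central computation is the identity of phases
\[
\pm\int_{r_0}^r b\,\d s \;-\;\bigl(\pm b_{\mathrm{sr}}(r-r_0)\mp\theta(\lambda,\sigma)\bigr)
\;=\;\mp\int_r^\infty (b_{\mathrm{sr}}-b)(s,\sigma)\,\d s,
\]
which identifies the ratio of the two quantities in \eqref{eq:reduction} for large $r$ as
\[
\bigl(b/b_{\mathrm{sr}}\bigr)^{-1/2}(r,\sigma)\cdot \exp\Bigl(\mp \i\int_r^\infty(b_{\mathrm{sr}}-b)(s,\sigma)\,\d s\Bigr).
\]
The short-range hypothesis \eqref{eq:shortcondib} yields $b(r,\sigma)/b_{\mathrm{sr}}(\sigma)\to 1$ uniformly in $\sigma$ as $r\to\infty$, and \eqref{eq:shortcondib2} gives the same conclusion for the remainder phase integral. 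Thus the pointwise ratio tends uniformly to $1$, so \eqref{eq:reduction} follows by factoring out $\phi^\pm_{\mathrm{sr}}[\e^{\mp \i\theta}F^\pm(\lambda)\psi]\in B^*$ and appealing to the characterization of $B^*_0$ as the subspace of $B^*$ whose tail norm $\sup_{\nu\ge N}R_\nu^{-1/2}\|F_\nu{}\cdot{}\|_{\vH}$ vanishes as $N\to\infty$.

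Uniqueness of $F^\pm_{\mathrm{sr}}(\lambda)\psi$ in (1) follows from a non-degeneracy statement for $\phi^\pm_{\mathrm{sr}}[{}\cdot{}]$ modulo $B^*_0$, obtained by replicating \eqref{eq:aEigenf2w} with $b_{\mathrm{sr}}$ in place of $b$ (the leading-order prefactor). The separate continuity in (1) then transfers from that of $F^\pm(\lambda)\psi$ (Theorem~\ref{thm:strong} and Proposition~\ref{prop:dist-four-transf}) via the continuous phase modifier. Part (3) is now just the definition of $F^\pm_{\mathrm{sr}}$, and part (2) is immediate from (3): since $|\e^{\mp\i\theta(\lambda,\sigma)}|=1$ one has $\|F^\pm_{\mathrm{sr}}(\lambda)\psi\|_{\vG}=\|F^\pm(\lambda)\psi\|_{\vG}$, whence the desired isometry extension to $\vH_{\vI}\to\widetilde\vH_{\vI}$ follows from Proposition~\ref{prop:dist-four-transf}.

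\emph{Main obstacle.} The delicate step is turning the uniform-in-$\sigma$ pointwise-in-$r$ asymptotic equivalence of phases and amplitudes into membership in $B^*_0$ rather than merely in $B^*$; this is however routine once one inserts the co-area decomposition and uses that the error factor is dominated by a quantity tending to zero uniformly in $\sigma$ as $r\to\infty$ under \eqref{eq:shortcondib}--\eqref{eq:shortcondib2}.
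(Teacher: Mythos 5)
Your proposal is essentially correct and reproduces what the paper's one-line proof delegates to the reference: the paper simply cites \cite[Lemma 3.13]{IS3}, which is precisely the statement that a phase modifier $\e^{\mp\i\theta}$ with $\theta(\lambda,\sigma)=\int_{r_0}^\infty(b_{\rm sr}-b)\,\d s$ convergent (here, by \eqref{eq:shortcondib}--\eqref{eq:shortcondib2}) transforms the stationary wave operators as stated, and you unpack that lemma correctly.

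Two small remarks. First, a sign slip: in the central phase identity the right-hand side should be $\pm\int_r^\infty(b_{\rm sr}-b)\,\d s$, not $\mp\int_r^\infty(b_{\rm sr}-b)\,\d s$; this does not affect the conclusion that the error factor tends to $1$ uniformly in $\sigma$. Second, the statement that $R(\lambda\pm\i 0)\psi-\phi^\pm[F^\pm(\lambda)\psi]\in B^*_0$ is asserted rather than proved (``is already built into the construction''). For $\psi\in\vH_{1+}$ it follows from Theorem~\ref{thm:strong} together with the co-area identity, but for general $\psi\in B$ only the averaged limit \eqref{eq:extbF2} is available, so one must either pass through a density argument or invoke the cited \cite[Lemma 3.13]{IS3} directly as the paper does; you should flag this as a reference rather than claim it as an immediate consequence of \eqref{eq:defRes}--\eqref{eq:160126}. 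With those caveats, your reduction, the uniform-in-$\sigma$ control of amplitude and phase, the $B^*_0$ tail-norm criterion via co-area, and the deduction of (2) and (3) from (1) via unitarity of the phase modifier and Proposition~\ref{prop:dist-four-transf} are all correct and represent the same route the paper takes.
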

\begin{proof}
The assertions are  immediate consequences of the assumptions and 
our  previous results, that is more precisely, those  reviewed in Section~\ref{subsec:preliminary
  result} and \cite[{Lemma 3.13}]{IS3}.
\end{proof}

A simplified free dynamics for a short-range model 
may be constructed just by 
replacing $\phi^\pm$ by $\phi_{\mathrm{sr}}^\pm$ in
\eqref{eq:14.3.12.20.22},
but here let us go one step forward. 
According to the stationary phase argument, cf.\ Lemma~\ref{lem:14.4.29.22.28}, 
we should for $h\in \widetilde{\mathcal H}_{\mathcal I}$ and $t>0$ study  
\begin{align}
\begin{split}
U^\pm_{\mathrm{sr}}(t)h(r,\sigma)
=&(2\pi )^{-1/2}{\mathrm e^{\mp 3\mathrm i\pi/4}}
1_{[r_0,\infty)}(r)
\mathrm e^{\pm\mathrm iK_{\mathrm{sr}}}
\e^{-\int_{r_0}^r(\Delta r)(s,\sigma)/2\,\mathrm ds}
\\&\phantom{{}={}}{}\cdot
\bigl(\tfrac{r-r_0}{t^2}\bigr)^{1/2}
h\bigl(\tfrac{(r-r_0)^2}{2t^2}+\lambda_0(\sigma),\sigma\bigr),
\end{split}
\label{eq:16091819}
\end{align}  where
\begin{align*}
  K_{\mathrm{sr}}=(r-r_0)^2/(2t)-t\lambda_0(\sigma).
\end{align*} 
 Note that $U^\pm_{\mathrm{sr}}(t)$ is a contraction,
 i.e. $\|U^\pm_{\mathrm{sr}}(t)h\|_{\vH}\leq
 \|h\|_{\widetilde{\mathcal H}_{\mathcal I}}$, and that the dynamics 
 $U^\pm_{\mathrm{sr}}(\cdot)$ are  asymptotically
 isometric, i.e. $\lim_{t\to \infty}
 \|U^\pm_{\mathrm{sr}}(t)h\|_{\vH}=\|h\|_{\widetilde{\mathcal
     H}_{\mathcal I}}$. In any case  Theorem~\ref{thm:14.3.12.18.20} simplifies  as follows.

\begin{thm}\label{thm:14.3.12.18.20sr}%\label{thm:14.3.12.18.20srA}
In addition to Condition~\ref{cond:12.6.2.21.13bb},
suppose that $q$ is of short-range type,
and that \eqref{eq:1609189} holds.
For all $h\in \widetilde{\mathcal H}_{\mathcal I}$ 
there exist the limits
\begin{align}
  \label{eq:53sr}%\label{eq:53sh2}
  W_{\rm sr}^\pm h:=\lim_{t\to \infty}\e^{\pm\i tH}
U_{\rm sr}^\pm(t)h\quad \mbox{in } \mathcal H.
\end{align} 
These limits $W_{\rm sr}^\pm$ are unitary  operators 
$W_{\rm sr}^\pm\colon \widetilde{\mathcal H}_{\mathcal I}\to\mathcal H_{\mathcal I}$,
and  
  \begin{align}\label{eq:54sr}%\label{eq:54srA}
  F_{\rm sr}^\pm=(W_{\rm sr}^\pm)^*,
\end{align}
respectively.  
Whence  the wave operators $W^\pm_{\mathrm{sr}}$ 
 are complete on $\mathcal I$.
\end{thm}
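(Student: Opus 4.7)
The plan is to reduce Theorem~\ref{thm:14.3.12.18.20sr} to the long-range statement Theorem~\ref{thm:14.3.12.18.20} by identifying $U^\pm_{\mathrm{sr}}(t)$ with the composition $U^\pm(t)\circ M_{\e^{\pm\i\theta}}$ up to an $\mathcal H$-norm error vanishing as $t\to\infty$, where $M_{\e^{\pm\i\theta}}$ denotes multiplication by $\e^{\pm\i\theta(\lambda,\sigma)}$ on $\widetilde{\mathcal H}_{\mathcal I}$. The phase $\theta$ is the one from Corollary~\ref{cor:strongbb}(iii), and the short-range bound \eqref{eq:shortcondib} makes $b-b_{\mathrm{sr}}$ integrable in $r$, so $\theta(\lambda,\sigma)=\int_{r_0}^\infty(b_{\mathrm{sr}}-b)(s,\sigma)\,\d s$ is a well-defined bounded function on $\mathcal I\times S$; in particular $M_{\e^{\pm\i\theta}}$ is unitary.

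The core step is to establish, for any $h\in C^1_{\mathrm c}(\mathcal I\times S)$, the asymptotic identity
\begin{align}\label{eq:srcore}
  \bigl\|U^\pm(t)\bigl(\e^{\pm\i\theta}h\bigr)-U^\pm_{\mathrm{sr}}(t)h\bigr\|_{\mathcal H}\longrightarrow 0\quad\text{as } t\to\infty.
\end{align}
Under the assumption $|\d r^{\mathrm{ex}}|=1$ on $\{r>r_0/2\}$ one has $\tilde b=b$, $\tilde\omega=\omega$ and $\mathop{\mathrm{div}}\tilde\omega=\Delta r$, so \eqref{eq:gen1B} and \eqref{eq:gen1Bb} differ only through the amplitude $[2(\lambda-q_1(r,\sigma))]^{-1/4}$ versus $b_{\mathrm{sr}}(\sigma)^{-1/2}=[2(\lambda-\lambda_0(\sigma))]^{-1/4}$ and the phase $\int_{r_0}^r b(s,\sigma)\,\d s$ versus $b_{\mathrm{sr}}(\sigma)(r-r_0)$. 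Writing
$$
  \int_{r_0}^r b(s,\sigma)\,\d s=b_{\mathrm{sr}}(\sigma)(r-r_0)+\int_{r_0}^r (b-b_{\mathrm{sr}})(s,\sigma)\,\d s,
$$
the last integral converges to $-\theta(\lambda,\sigma)$ with a remainder integrable in $r$ by \eqref{eq:shortcondib2}, and simultaneously the amplitude converges to $b_{\mathrm{sr}}(\sigma)^{-1/2}$. Hence the oscillatory integrand $\e^{\mp\i t\lambda}\phi^\pm_\lambda[\e^{\pm\i\theta}h(\lambda,\cdot)]$ agrees pointwise, modulo errors that are integrable in $r$ locally uniformly in $\lambda$, with $\e^{\mp\i t\lambda}\phi^\pm_{\mathrm{sr},\lambda}[h(\lambda,\cdot)]$. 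Re-running on both sides the stationary phase computation that produced the explicit formula \eqref{eq:16091819} via Lemma~\ref{lem:14.4.29.22.28} then yields \eqref{eq:srcore}.

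With \eqref{eq:srcore} in hand, Theorem~\ref{thm:14.3.12.18.20} gives $\lim_{t\to\infty}\e^{\pm\i tH}U^\pm_{\mathrm{sr}}(t)h=W^\pm(\e^{\pm\i\theta}h)$ in $\mathcal H$ for every $h\in C^1_{\mathrm c}(\mathcal I\times S)$. Since $U^\pm_{\mathrm{sr}}(t)$ is a contraction uniformly in $t$ and $C^1_{\mathrm c}(\mathcal I\times S)$ is dense in $\widetilde{\mathcal H}_{\mathcal I}$, a standard three-epsilon argument extends the limit to all of $\widetilde{\mathcal H}_{\mathcal I}$, yielding the identity $W^\pm_{\mathrm{sr}}=W^\pm\circ M_{\e^{\pm\i\theta}}$, a composition of unitaries from $\widetilde{\mathcal H}_{\mathcal I}$ onto $\mathcal H_{\mathcal I}$. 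Taking adjoints and invoking Corollary~\ref{cor:strongbb}(iii) then gives $(W^\pm_{\mathrm{sr}})^*=M_{\e^{\mp\i\theta}}(W^\pm)^*=\e^{\mp\i\theta}F^\pm=F^\pm_{\mathrm{sr}}$, i.e.\ \eqref{eq:54sr}, from which completeness on $\mathcal I$ is immediate.

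The main obstacle is \eqref{eq:srcore}: controlling the stationary phase error uniformly in $\sigma\in S$ simultaneously with the perturbation of both amplitude and phase, and in the $L^2(M)$-norm rather than pointwise. Compactness of $\supp h$ and the $C^1$-regularity in $\lambda$ make the stationary phase machinery deliver the explicit leading term \eqref{eq:16091819} with a quantitative remainder, while the short-range integrability of $b-b_{\mathrm{sr}}$ is precisely what allows the entire phase discrepancy to be absorbed into the single multiplier $\e^{\pm\i\theta}$ rather than producing a growing or oscillating correction.
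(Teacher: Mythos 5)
Your final bookkeeping (the identification $W^\pm_{\rm sr} = W^\pm\circ M_{\e^{\pm\i\theta}}$, taking adjoints, invoking Corollary~\ref{cor:strongbb}(iii) to get $\e^{\mp\i\theta}F^\pm=F^\pm_{\rm sr}$, and the density argument from the contraction property of $U^\pm_{\rm sr}(t)$) is exactly right, and the overall strategy you lay out is sound. But you take a genuinely different route from the paper's, and in your version the decisive technical step is asserted rather than carried out.

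The paper proves Theorem~\ref{thm:14.3.12.18.20sr} by observing that a short-range $q$ is in particular of Dollard type, and then deducing \eqref{eq:asysimc} from the already-proved Dollard estimate \eqref{eq:asysim}: comparing the explicit definitions \eqref{eq:16091819} and \eqref{17053120}, $U^\pm_{\rm sr}(t)$ and $U^\pm_{\rm do}(t)$ differ only through the phases $K_{\rm sr}$ vs.\ $K_{\rm do}$, whose difference under \eqref{eq:shortcondib} converges to the difference of the corresponding $\theta$'s; all the hard asymptotic work (the expansions of $\lambda_c$, $\partial_r\lambda_c$, $K_{\rm do}$ via \eqref{eq:1asy}--\eqref{eq:2asy}) was already done once in the proof of Theorem~\ref{thm:14.3.12.18.20dol}. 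You instead compare $U^\pm(t)(\e^{\pm\i\theta}h)$ directly with $U^\pm_{\rm sr}(t)h$, bypassing the Dollard intermediary. This is legitimate, and the paper even remarks after its own proof that proving the analogue of \eqref{eq:asysimc} directly by adapting the Dollard argument to the short-range setting would be ``possibly more appealing.''

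The gap is that your proof of \eqref{eq:srcore} consists of the sentence ``Re-running on both sides the stationary phase computation that produced the explicit formula \eqref{eq:16091819} via Lemma~\ref{lem:14.4.29.22.28}.'' But \eqref{eq:16091819} is a \emph{definition}, not the output of a stationary phase computation; there is nothing to ``re-run.'' Lemma~\ref{lem:14.4.29.22.28} produces the leading term $U^\pm_0(t)h$ of $U^\pm(t)h$, and what you actually need is $\|U^\pm_0(t)(\e^{\pm\i\theta}\check h)-U^\pm_{\rm sr}(t)\check h\|_{\mathcal H}\to 0$. That requires the quantitative asymptotics $\lambda_c=\lambda_0(\sigma)+\tfrac12\tfrac{(r-r_0)^2}{t^2}+o(t^0)$, $\partial_r\lambda_c=(r-r_0)/t^2+o(t^{-1})$, and the identification $K_1\to K_{\rm sr}-\theta(\lambda_c,\sigma)+o(t^0)$, uniformly on the set $\Omega^2_c(t)$ where the initial condition lives; these are exactly the computations \eqref{eq:1asy}--\eqref{eq:2asy} in the Dollard proof, specialized to the short-range case where the $Q$-integrals become asymptotically negligible. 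Your observation that the pointwise integrands $\phi^\pm_\lambda[\e^{\pm\i\theta}h(\lambda,\cdot)]$ and $\phi^\pm_{\rm sr,\lambda}[h(\lambda,\cdot)]$ agree as $r\to\infty$ is correct and does supply the needed input on the amplitude and phase, but the passage from pointwise agreement of integrands to $\mathcal H$-norm agreement of the stationary phase outputs is not automatic: it requires controlling the $r$-location of the stationary point and the Jacobian $\partial_r\lambda_c$ uniformly in $t$, which is precisely what the paper's explicit asymptotics accomplish. As written, your proposal identifies the right statement \eqref{eq:srcore} and the right mechanism (short-range integrability of $b-b_{\rm sr}$), but it is a plan rather than a proof of that step.
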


We note that the standard hyperbolic space fits into this framework 
with a positive constant $\lambda_0(\sigma)\equiv \lambda_0>0$,
and the Euclidean space does so with the zero constant $\lambda_0(\sigma)\equiv 0$.

\subsubsection{Comparison with \cite{IS1}}

Next we furthermore assume $\lambda_0(\sigma)\equiv 0$
and compare Theorem~\ref{thm:14.3.12.18.20sr}
with the main result of \cite{IS1} on  existence and completeness 
of  wave operators on asymptotically Euclidean manifolds.

We begin with a comparison of the settings.
For reference let us quote below the conditions of \cite{IS1} in
a form suitable for comparison.

\begin{cond}\label{cond:14.5.24.7.4}
Let $(M,g)$ be a connected and complete Riemannian manifold,
and let $V\in L^\infty(M)$ be real-valued.
There exist a function $r\in C^\infty(M)$ with image $r(M)=[1,\infty)$
and constants 
$\delta,\kappa,\eta,C>0$ and $r_0\ge 2$ such that:
\begin{enumerate}
\item
The $r$-balls $\{x\in M\,|\, r(x)<R\}$, $R>0$, are relatively compact in $M$.
\item
The following relations hold for $r(x)\ge r_0/2$:
  \begin{align*}
    |\mathrm dr|=1,\quad
    R\iota_R^* \nabla^2r\ge \tfrac12(1+\delta) \iota_R^*g. 
\end{align*}
\item
The following estimates hold globally on $M$ for $\alpha=0,1$:
  \begin{align}
r\ge 1,\quad 
    |\nabla^\alpha \Delta r|\le C r^{-1/2-\alpha-\kappa},\quad
    |V|\le Cr^{-1-\eta}. \label{eq:34b}
\end{align}
\end{enumerate}
\end{cond}

Condition~\ref{cond:14.5.24.7.4} above would seem quite different from 
the setting of \cite{IS1},
but obviously Condition~\ref{cond:14.5.24.7.4} follows from
Conditions~1.1--1.4 of \cite{IS1}. On the other hand,
Condition~\ref{cond:14.5.24.7.4} constitutes what is used in the
proofs of \cite{IS1}, and consequently the results of \cite{IS1} remain
valid under Condition~\ref{cond:14.5.24.7.4}. 
Hence we may regard Condition~\ref{cond:14.5.24.7.4} exactly as the 
setting of \cite{IS1}.

The setting of this paper may be considered 
slightly more restrictive than that of \cite{IS1}.
In fact, we have the following lemma (with obvious proof).
\begin{lemma}\label{lem:170601}
Suppose Condition~\ref{cond:14.5.24.7.4}  with a splitting $V=V_1+V_2$
such that 
$$| V_1|\le Cr^{-1-\eta},\quad |\nabla^r V_1|\le Cr^{-2-\eta},\quad |V_2|\le Cr^{-3/2-\eta}.$$
Then Condition \ref{cond:12.6.2.21.13c}  is satisfied for
$$q_{11}=0,\quad q_{12}=0,\quad
q_{21}=V_1+\tfrac18\tilde\eta(\Delta r)^2,\quad
q_{22}=V_2+\tfrac14\tilde\eta\nabla^r\Delta r.$$
 In addition, if \eqref{eq:smootrA} and \eqref{eq:smootr} hold, then 
Condition~\ref{cond:12.6.2.21.13bb} is also satisfied.
\end{lemma}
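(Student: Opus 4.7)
The plan is to verify Condition~\ref{cond:12.6.2.21.13c} (and for the second assertion, Condition~\ref{cond:12.6.2.21.13bb}) by systematic substitution of the bounds in Condition~\ref{cond:14.5.24.7.4} into the defining estimates of the various sub-conditions. Two structural features drive every step: $(i)$ completeness of $(M,g)$ allows the choice $(M^{\rm ex},g^{\rm ex},r^{\rm ex})=(M,g,r)$ in Condition~\ref{cond:altG}; $(ii)$ the flatness $|\mathrm dr|=1$ on $\{r>r_0/2\}$ reduces $\tilde\eta$ and every $r$-gradient of $|\mathrm dr|^2$ to constants or zero there.

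With $M^{\rm ex}=M$, the $C^1$- and (later) $C^2$-extension requirements on $\tilde b$ and $q_1$ collapse, and the technical bound \eqref{eq:13.9.23.16.41b2} is automatic by the remark following Condition~\ref{cond:12.6.2.21.13c}. Condition~\ref{cond:12.6.2.21.13} is immediate with $c=1$. For Condition~\ref{cond:12.6.2.21.13a}: on $\{r>r_0/2\}$ both $\nabla|\mathrm dr|^2$ and $\nabla^r|\mathrm dr|^2$ vanish; the identity $\nabla^2 r(\omega,\cdot)=0$ combined with the PSD lower bound \eqref{eq:13.9.5.3.30} forces $|\nabla^2 r|\le\Delta r\le Cr^{-1/2-\kappa}$ on that set, while smoothness and relative compactness of $r$-balls handle $\{r\le r_0/2\}$; the bound $|\ell^{\bullet i}\nabla_i\Delta r|\le|\nabla\Delta r|\le Cr^{-3/2-\kappa}$ permits any $\tau\in(1,1+2\kappa]$; and \eqref{eq:13.9.5.3.30} holds with $\sigma'=1+\delta$, allowing any $\sigma\in(1,1+\delta)$. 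Condition~\ref{cond:12.6.2.21.13b} on $\mathcal I=(0,\infty)=(\lambda_0,\infty)$ follows from Rellich's theorem applied on the relatively compact support of each $\chi_n$.

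The substantive step is Condition~\ref{cond:12.6.2.21.13bbb} for the prescribed splitting. With $q_{11}=q_{12}=0$, all bounds on these terms are trivial. For $r>r_0$ one has $\tilde\eta\equiv 1$ and $\nabla^r\tilde\eta=0$, so
\begin{align*}
q_{21}=V_1+\tfrac18(\Delta r)^2,\qquad \nabla^r q_{21}=\nabla^r V_1+\tfrac14\Delta r\,\nabla^r\Delta r;
\end{align*}
the hypotheses on $V_1$ combined with \eqref{eq:34b} yield $|(\nabla^r)^k q_{21}|\le C(r^{-k-1-\eta}+r^{-k-1-2\kappa})$ for $k=0,1$, consistent with $|(\nabla^r)^k q_{21}|\le Cr^{-k-\rho}$ for any $\rho<1+\min\{\eta,2\kappa\}$. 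Similarly $|q_{22}|\le C(r^{-3/2-\eta}+r^{-3/2-\kappa})$, consistent with $|q_{22}|\le Cr^{-1-\rho/2}$ for any $\rho<1+2\min\{\eta,\kappa\}$. Since $\eta,\kappa>0$, a common $\rho>1$ can be chosen; together with the admissible $\sigma,\tau>1$ above, this yields $2\beta_c=\min\{\sigma,\tau,\rho\}>1$, completing Condition~\ref{cond:12.6.2.21.13c}.

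For the second assertion, Condition~\ref{cond:12.6.2.21.13bb} is verified in its second alternative: $\tilde b^{\rm ex}=\eta_\lambda\eta\sqrt{2\lambda}$ is manifestly $C^2$ on $\mathcal I\times M$; \eqref{eq:smootV} is trivial since $q_1\equiv 0$; both estimates in \eqref{eq:smootr0} are trivial on $\{r>r_0/2\}$ because $|\mathrm dr|^2\equiv 1$ there; and \eqref{eq:smootrA}, \eqref{eq:smootr} are precisely the added hypotheses. The entire argument is a parameter chase, and the only point requiring attention is that $\sigma,\tau,\rho$ can simultaneously be chosen above $1$—which is secured by $\delta,\eta,\kappa>0$. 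No genuine obstacle arises.
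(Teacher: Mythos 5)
The paper does not supply a proof for this lemma: it is introduced by the phrase ``we have the following lemma (with obvious proof),'' so there is no argument to compare yours against; the only task is to check that your verification is sound. It is. Your structural observations are the right ones: completeness of $(M,g)$ permits taking $(M^{\rm ex},g^{\rm ex},r^{\rm ex})=(M,g,r)$ so that Condition~\ref{cond:altG} and the bound \eqref{eq:13.9.23.16.41b2} come for free by the remark following Condition~\ref{cond:12.6.2.21.13c}; the normalization $|\mathrm dr|=1$ on $\{r>r_0/2\}$ kills all the $|\mathrm dr|^2$-terms in Conditions~\ref{cond:12.6.2.21.13a}, \ref{cond:12.6.2.21.13bbb} and in \eqref{eq:smootr0}; the choice $q_1=q_{11}=q_{12}=0$ trivializes \eqref{eq:smootV} and gives $\tilde b=\eta\eta_\lambda\sqrt{2\lambda}$, which is manifestly $C^\infty$; and the bound $|\nabla^2r|\le\Delta r$ follows from the block structure of $\nabla^2 r$ together with the positivity in Condition~\ref{cond:14.5.24.7.4}(2) (you cite \eqref{eq:13.9.5.3.30}, which is the target rather than the hypothesis, but the content is the same). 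The decay-rate bookkeeping — $\sigma\in(1,1+\delta)$, $\tau\in(1,1+2\kappa]$, $\rho\in(1,1+\min\{\eta,2\kappa\})$, yielding $\min\{\sigma,\tau,\rho\}>1$ — is exactly what \eqref{eq:BAScond} requires, and your verification that $q_{21}$, $q_{22}$ satisfy the Condition~\ref{cond:12.6.2.21.13bbb} estimates at these rates is correct on $\{r>r_0\}$; you should just remark, as you implicitly do elsewhere, that the compact complement is handled by smoothness and relative compactness of $r$-balls. The one point worth being explicit about is that $q_{21}\in C^1(M)$ requires $V_1\in C^1(M)$; this is implicitly part of reading the hypothesis $|\nabla^r V_1|\le Cr^{-2-\eta}$, but stating it avoids ambiguity.
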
 

If $\lambda_0(\sigma)\equiv 0$  
the simplified comparison dynamics 
$U_{\rm sr}^\pm(t)$
defined by \eqref{eq:16091819}
coincides with the comparison dynamics
of \cite{IS1}. 
This observation will allow us to employ the existence of  the wave
operator $\Omega_+$ from \cite{IS1}  
 to  deduce the assertions of 
Theorem~\ref{thm:14.3.12.18.20sr}
without imposing the stronger
Condition~\ref{cond:12.6.2.21.13bb}. In particular this shows a
stationary representation of $\Omega_+$. Whence we conclude
 the following alternative version of 
Theorem~\ref{thm:14.3.12.18.20sr}.

\begin{thm}\label{thm:14.3.12.18.20srb}
  Suppose Condition~\ref{cond:14.5.24.7.4} and the splitting
  assumption of Lemma \ref{lem:170601}.  For all $h\in
  \widetilde{\mathcal H}_{\mathcal I}$ there exist the limits
\begin{align}
  \label{eq:53srb}
  W_{\rm sr}^\pm h:=\lim_{t\to \infty}\e^{\pm\i tH}
U_{\rm sr}^\pm(t)h\quad \mbox{in } \mathcal H.
\end{align} 
These limits $W_{\rm sr}^\pm$ are  unitary  operators 
$W_{\rm sr}^\pm\colon \widetilde{\mathcal H}_{\mathcal I}\to\mathcal
H_{\mathcal I}$, and  
\begin{align}\label{eq:54srb}
  F_{\rm sr}^\pm=(W_{\rm sr}^\pm)^*.
\end{align} 
\end{thm}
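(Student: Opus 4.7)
The plan is to combine the stationary framework opened up by Lemma~\ref{lem:170601} with the asymptotic completeness result of \cite{IS1}. The splitting assumption in Lemma~\ref{lem:170601} guarantees Condition~\ref{cond:12.6.2.21.13c}, so Corollary~\ref{cor:strongbb} applies and produces $F_{\rm sr}^\pm\colon\mathcal H_{\mathcal I}\to\widetilde{\mathcal H}_{\mathcal I}$ as isometries. Note that the stronger Condition~\ref{cond:12.6.2.21.13bb} is \emph{not} assumed here, so Theorem~\ref{thm:14.3.12.18.20sr} cannot be invoked directly; the point of the present statement is to bypass it.

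For the existence of the limits in \eqref{eq:53srb}, I would appeal to the main theorem of \cite{IS1}. Since Condition~\ref{cond:14.5.24.7.4} forces $|\mathrm dr|=1$ on $\{r>r_0/2\}$ and $\lambda_0(\sigma)\equiv 0$, the comparison dynamics $U_{\rm sr}^+(t)$ of \eqref{eq:16091819} coincides with the free dynamics used in \cite{IS1}; existence and asymptotic completeness from \cite{IS1} then yield $W_{\rm sr}^+$ as a unitary operator $\widetilde{\mathcal H}_{\mathcal I}\to\mathcal H_{\mathcal I}$. The corresponding statement for $W_{\rm sr}^-$ follows by time reversal from the identities $\overline{U_{\rm sr}^+(t)h}=U_{\rm sr}^-(t)\bar h$ and $\overline{\mathrm e^{\mathrm i tH}\psi}=\mathrm e^{-\mathrm i tH}\bar\psi$, the latter using the reality of $H$.

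The decisive step is \eqref{eq:54srb}. Since $W_{\rm sr}^+$ is unitary and $F_{\rm sr}^+$ is isometric, it suffices to verify $F_{\rm sr}^+W_{\rm sr}^+h=h$ for $h$ in the dense subset $C^1_{\rm c}(\mathcal I\times S)\subseteq\widetilde{\mathcal H}_{\mathcal I}$. My plan is to set $\psi=W_{\rm sr}^+h$, so that $\mathrm e^{-\mathrm i tH}\psi-U_{\rm sr}^+(t)h\to 0$ in $\mathcal H$ as $t\to\infty$, and then to evaluate $F_{\rm sr}^+(\lambda)\psi$ through its characterization $R(\lambda+\mathrm i0)\psi-\phi_{\rm sr}^+[F_{\rm sr}^+(\lambda)\psi]\in B_0^*$ of Corollary~\ref{cor:strongbb}. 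Representing the limiting resolvent as an Abel integral $R(\lambda+\mathrm i0)\psi=\mathrm i\lim_{\epsilon\downarrow 0}\int_0^\infty\mathrm e^{\mathrm i(\lambda-H+\mathrm i\epsilon)t}\psi\,\mathrm dt$, substituting $U_{\rm sr}^+(t)h$ for $\mathrm e^{-\mathrm i tH}\psi$ in the large-$t$ regime, and applying stationary phase to the resulting radial integral (the critical point being $r-r_0=t\sqrt{2\lambda}$, which is exactly the outgoing phase in \eqref{eq:gen1Bb}) should extract the WKB profile $\phi_{\rm sr}^+[h(\lambda,\cdot)]$ modulo $B_0^*$, yielding $F_{\rm sr}^+(\lambda)\psi=h(\lambda,\cdot)$; the analogous identity for $F_{\rm sr}^-$ then follows by time reversal. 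The main obstacle is precisely this stationary/time-dependent reconciliation: the off-critical radii contributions and the Abel regularization error must be shown to lie in $B_0^*$, which should follow from the decay rates in Condition~\ref{cond:14.5.24.7.4} and the splitting of Lemma~\ref{lem:170601}, combined with the dispersive control of $U_{\rm sr}^\pm(t)$ furnished by \cite{IS1}.
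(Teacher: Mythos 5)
Your first step (existence of \eqref{eq:53srb} from \cite{IS1}) matches the paper's. The divergence is in how you establish \eqref{eq:54srb}, and here you have overlooked the route the paper actually takes, which is considerably more economical than what you propose.

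The key observation you miss is that Lemma~\ref{lem:14.5.8.13.51} is available under precisely the hypotheses at hand: it requires only Condition~\ref{cond:12.6.2.21.13c} (supplied by Lemma~\ref{lem:170601}) together with the \emph{existence} of the limits $W^\pm h=\lim_t\e^{\pm\i tH}U^\pm(t)h$ for $h\in C^2_{\mathrm c}(\mathcal I\times S)$ --- it does \emph{not} require the stronger Condition~\ref{cond:12.6.2.21.13bb}. Since \eqref{eq:asysimc} (which is proved under Condition~\ref{cond:12.6.2.21.13c} alone) and \eqref{eq:14.5.9.12.13BB} give $\|U^+_{\rm sr}(t)\check h-U^+(t)h\|\to 0$, the existence of $W^\pm_{\rm sr}$ furnished by \cite{IS1} transfers to the existence of $W^\pm$. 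Lemma~\ref{lem:14.5.8.13.51} then yields $F^\pm=(W^\pm)^*$, Corollary~\ref{cor:14.5.10.7.30} gives unitarity, and the phase relations $F^\pm_{\rm sr}=\e^{\mp\i\theta}F^\pm$ (Corollary~\ref{cor:strongbb}) and $h=\e^{\i\theta}\check h$ (in \eqref{eq:asysimc}) convert this into \eqref{eq:54srb}. All the Abel-limit and stationary-phase technology you want to invoke is packed once and for all into the (already given) proof of Lemma~\ref{lem:14.5.8.13.51}; you need not redo it.

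Your proposed "decisive step" would in effect re-derive a special case of Lemma~\ref{lem:14.5.8.13.51} from scratch, and as sketched it has a genuine gap: you want to evaluate $F^+_{\rm sr}(\lambda)\psi$ for $\psi=W^+_{\rm sr}h$ via the characterization $R(\lambda+\i 0)\psi-\phi^+_{\rm sr}[F^+_{\rm sr}(\lambda)\psi]\in B^*_0$ of Corollary~\ref{cor:strongbb}, but that characterization is stated only for $\psi\in B$, whereas $W^+_{\rm sr}h\in\mathcal H_{\mathcal I}$ need not lie in $B$. The extension of $F^+_{\rm sr}$ to $\mathcal H_{\mathcal I}$ is by density, so pointwise evaluation through the $B^*_0$-characterization is not directly available. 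The paper sidesteps exactly this by testing $W^+$ against a dense set of \emph{nice} vectors $\psi$ (where the stationary theory does apply), i.e.\ by computing the adjoint; this is what the proof of Lemma~\ref{lem:14.5.8.13.51} does. If you insist on your direct route, you would have to supply a regularization/density argument to replace this, on top of the Abel and stationary-phase estimates whose difficulty you already acknowledge. In short: the idea is in the right direction, but the economical and rigorous argument is to recognize that Lemma~\ref{lem:14.5.8.13.51} applies here and to combine it with the comparison \eqref{eq:asysimc}.
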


Since Condition~\ref{cond:12.6.2.21.13bb} is missing, 
$F^\pm_{\mathrm{sr}}\colon \mathcal H_{\mathcal I}
\to \widetilde{\mathcal H}_{\mathcal I}$
can be constructed only as isometries by the arguments of \cite{IS2,IS3}.
However, 
we can deduce unitarity of $F^\pm_{\mathrm{sr}}$  from \eqref{eq:54srb} 
without Condition~\ref{cond:12.6.2.21.13bb}.

\subsection{Simplification for Dollard type  models}\label{subsec:17053116}

Similarly to the short-range case we can simplify 
our main result  Theorem~\ref{thm:14.3.12.18.20}  for  the Dollard
case  too.
Here we assume that $q$ is of Dollard type
and also that \eqref{eq:1609189} holds.
Then we set for $r> r_0$
\begin{align*}
b_{\rm do}(r,\sigma)= 
\tilde b_{\rm do}(r,\sigma)= \sqrt{2(\lambda-\lambda_0(\sigma))}
+\bigl[\lambda_0(\sigma)-q_1(r,\sigma)\bigr]\big/\sqrt{2(\lambda-\lambda_0(\sigma))},
\end{align*}
and for $\xi\in\mathcal G$
\begin{align*}
\begin{split}
   \phi^\pm_{\mathrm{do}}[\xi](r,\sigma)&= \eta_\lambda(r)
   b_{\mathrm{do}}(\sigma)^{-1/2}
   \exp \biggl(
\int_{r_0}^r \bigl(
\pm\mathrm i b_{\mathrm{do}}
-\tfrac12\Delta r\bigr)(s,\sigma)
\,\mathrm{d}s\biggr)\xi(\sigma),
\end{split}
\end{align*}
Note that $b_{\mathrm{do}}$ is the first order approximation of $b$ as
$r\to\infty$. By mimicking the proof of Corollary \ref{cor:strongbb}
we obtain the following result.

\begin{corollary}
\label{cor:dollard}
In addition to Condition~\ref{cond:12.6.2.21.13c},
suppose that $q$ is of Dollard type, 
and that \eqref{eq:1609189} holds.
\begin{enumerate}
\item
For any $\lambda\in\mathcal I$ and $\psi\in B$
there exist $F^\pm_{\mathrm{do}}(\lambda)\psi\in \mathcal G$ such that 
\begin{align*}
R(\lambda\pm\mathrm i0)\psi-\phi^\pm_{\mathrm{do}}
[F^\pm_{\mathrm{do}}(\lambda)\psi]\in B^*_0.
\end{align*}
In addition, $F^\pm_{\mathrm{do}}(\lambda)\psi$ 
are separately continuous in $\lambda\in \mathcal I$
and $\psi\in \mathcal G$.
\item
The operators 
\begin{align*}
F^\pm_{\mathrm{do}}=\int_{\mathcal I} \oplus F^\pm_{\mathrm{do}}(\lambda)\,\d \lambda
\end{align*} 
considered as mappings $B\cap\mathcal
H_{\mathcal I}\to \widetilde{\mathcal H}_{\mathcal I}$ 
extend uniquely
to  isometric  operators
$\mathcal H_{\mathcal I}\to \widetilde{\mathcal H}_{\mathcal I}$. 
%Moreover, they are 
%unitarily diagonalizing transforms for $H_{\mathcal I}$,
%respectively.
\item
The above $F^\pm_{\mathrm{do}}$ are  related to $F^\pm$
as follows:  
\begin{align*}%\label{eq:relatb}
    F_{\rm do}^\pm=\e^{\mp\i \theta}F^\pm;\quad
    \theta(\lambda,\sigma) = \int^\infty_{r_0}(b_{\rm
      do}-b)(s,\sigma)\,\mathrm{d}s.
     \end{align*}
\end{enumerate}
\end{corollary}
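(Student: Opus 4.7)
The plan is to mimic the proof of Corollary~\ref{cor:strongbb}, constructing $F^\pm_{\mathrm{do}}(\lambda)$ as a phase-twist of the distorted Fourier transform $F^\pm(\lambda)$. Under $|\mathrm d r^{\mathrm{ex}}|=1$ one has $\tilde b=b$ and $\tilde b_{\mathrm{do}}=b_{\mathrm{do}}$ for $r$ large, and a Taylor expansion of $\sqrt{2(\lambda-q_1)}$ around $\sqrt{2(\lambda-\lambda_0(\sigma))}$ combined with \eqref{eq:dollard} shows $|b-b_{\mathrm{do}}|=O(r^{-(1+\epsilon)})$ on $E^{\mathrm{ex}}$. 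Hence by \eqref{eq:dollard2} the integral
\begin{align*}
\theta(\lambda,\sigma):=\int_{r_0}^\infty (b_{\mathrm{do}}-b)(s,\sigma)\,\mathrm d s
\end{align*}
converges absolutely on $\mathcal I\times S$ and depends continuously on $\lambda$ locally uniformly in $\sigma$. I would then \emph{define}
\begin{align*}
F^\pm_{\mathrm{do}}(\lambda):=\mathrm e^{\mp\mathrm i\theta(\lambda,\cdot)} F^\pm(\lambda),
\end{align*}
which gives statement~(3) directly.

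The heart of the argument is the WKB identity in part~(1). By \cite[Lemma~3.13]{IS3} (the same input used in Corollary~\ref{cor:strongbb}) one has $R(\lambda\pm\mathrm i0)\psi-\phi^\pm[F^\pm(\lambda)\psi]\in B^*_0$ for every $\psi\in B$, so it suffices to prove that for each $\xi\in\mathcal G$,
\begin{align*}
\phi^\pm[\xi]-\phi^\pm_{\mathrm{do}}[\mathrm e^{\mp\mathrm i\theta}\xi]\in B^*_0.
\end{align*}
I would compare the two WKB expressions directly in spherical coordinates on $E^{\mathrm{ex}}$. For $r$ large, $\eta_\lambda=\eta=1$ and $\tilde\omega=\omega$, so $\operatorname{div}\tilde\omega=\Delta r$, and the two expressions share the common factor $\exp(-\tfrac12\int_{r_0}^r\Delta r\,\mathrm d s)\xi(\sigma)$. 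The comparison of the remaining exponentials reduces to the elementary identity
\begin{align*}
\exp\Bigl(\pm\mathrm i\!\int_{r_0}^r \!b_{\mathrm{do}}\,\mathrm d s\Bigr)\mathrm e^{\mp\mathrm i\theta(\lambda,\sigma)}
=\exp\Bigl(\pm\mathrm i\!\int_{r_0}^r\! b\,\mathrm d s\Bigr)\cdot\exp\Bigl(\mp\mathrm i\!\int_r^\infty\! (b_{\mathrm{do}}-b)(s,\sigma)\,\mathrm d s\Bigr),
\end{align*}
whose second factor tends to~$1$ uniformly on $S_r$ as $r\to\infty$ by \eqref{eq:dollard2}. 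Simultaneously, the prefactors $[2(\lambda-q_1)]^{-1/4}$ of $\phi^\pm$ and $b_{\mathrm{do}}^{-1/2}$ of $\phi^\pm_{\mathrm{do}}$ differ by $O(r^{-(1+\epsilon)})$, so that their contribution to the difference is also a $B^*_0$-perturbation.

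The remaining items are immediate. For part~(2), multiplication by the unit-modulus fibered function $\mathrm e^{\mp\mathrm i\theta(\lambda,\cdot)}$ is unitary on $\widetilde{\mathcal H}_{\mathcal I}$, so composing with the isometric extension $F^\pm\colon\mathcal H_{\mathcal I}\to\widetilde{\mathcal H}_{\mathcal I}$ from Proposition~\ref{prop:dist-four-transf} yields an isometry $F^\pm_{\mathrm{do}}\colon\mathcal H_{\mathcal I}\to\widetilde{\mathcal H}_{\mathcal I}$; the continuity claims in part~(1) descend from the corresponding continuity of $F^\pm(\lambda)\psi$ together with that of $\theta$. The main technical obstacle is the bookkeeping in the displayed $B^*_0$-estimate, which requires simultaneously controlling the prefactor mismatch and the truncated phase $\int_r^\infty(b_{\mathrm{do}}-b)\,\mathrm d s$ along the extended radial flow. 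However, both errors are governed by the Dollard decay~\eqref{eq:dollard} and its integrated consequence~\eqref{eq:dollard2}, in complete parallel with the role played by \eqref{eq:shortcondib2} in Corollary~\ref{cor:strongbb}, so no new conceptual ingredient beyond the short-range argument is required.
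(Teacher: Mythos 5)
Your proposal is correct and follows the same route the paper has in mind: the paper's proof consists of the single remark that one should ``mimic the proof of Corollary~\ref{cor:strongbb},'' which in turn just cites the reviewed stationary results and \cite[Lemma~3.13]{IS3}. You have simply unpacked that one-liner --- defining $F^\pm_{\rm do}(\lambda)=\e^{\mp\i\theta}F^\pm(\lambda)$, verifying via a Taylor expansion that $|b-b_{\rm do}|=O(r^{-1-\epsilon})$ so that $\theta$ converges, and checking that the WKB phases and prefactors differ by a $B^*_0$ error --- which is exactly the computation the authors left implicit.
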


Next we set for $h\in\widetilde{\mathcal H}_{\mathcal I}$
 \begin{align}
\begin{split}
 U_{\rm do}^\pm(t)h(r,\sigma)
=&(2\pi)^{-1/2}{\mathrm e^{\mp 3\mathrm i\pi/4}}
1_{[r_0,\infty)}(r)
\mathrm e^{\pm\mathrm i K_{\rm do}}\e^{-\int_{r_0}^r(\Delta r)(s,{}\cdot{})/2\,\mathrm ds}
\\&\phantom{{}={}}{}
\cdot\bigl(\tfrac{r-r_0}{t^2}\bigr)^{1/2}
h\bigl(\tfrac{(r-r_0)^2}{2t^2}+\lambda_0(\sigma),\sigma\bigr),
\end{split}
\label{17053120}
\end{align}
where
\begin{align*}
K_{\rm do}=\tfrac{(r-r_0)^2}{2t}-t\lambda_0(\sigma)-\tfrac
  t{(r-r_0)}\int^r_{r_0}\bigl(q_1(s,\cdot)-\lambda_0(\sigma)\bigr)\, \d s.
\end{align*} As for the short-range type dynamics the Dollard type
dynamics are  asymptotically
 isometric families  of contractions. 
\begin{thm}\label{thm:14.3.12.18.20dol}
Suppose Condition~\ref{cond:12.6.2.21.13bb},  $q$ is of Dollard type, 
and that \eqref{eq:1609189} holds.
For all  $h\in \widetilde{\mathcal H}_{\mathcal I}$
there exist the limits 
\begin{align}
  \label{eq:53dol}
  W_{\rm do}^\pm h:=\lim_{t\to \infty}\e^{\pm\i tH}  U_{\rm
    do}^\pm(t)h\quad \mbox{in } \mathcal H.
\end{align} 
These limits $W_{\rm do}^\pm$ are  unitary  operators 
$W_{\rm do}^\pm\colon \widetilde{\mathcal H}_{\mathcal I}\to\mathcal H_{\mathcal I}$,
and  
  \begin{align}\label{eq:54dol}
  F_{\rm do}^\pm=(W_{\rm do}^\pm)^*,
\end{align}
respectively. Whence  the wave operators $W^\pm_{\mathrm{do}}$ 
  are complete on $\mathcal I$.
\end{thm}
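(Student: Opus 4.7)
The plan is to reduce Theorem~\ref{thm:14.3.12.18.20dol} to the already established Theorem~\ref{thm:14.3.12.18.20} by a phase modification, in parallel with (but more refined than) the short-range reduction leading to Theorem~\ref{thm:14.3.12.18.20sr}. The key input is Corollary~\ref{cor:dollard}~(3), which relates the Dollard and the full distorted Fourier transforms by $F_{\rm do}^\pm=\e^{\mp \i \theta}F^\pm$ with $\theta(\lambda,\sigma)=\int_{r_0}^\infty (b_{\rm do}-b)(s,\sigma)\,\d s$ (well-defined locally uniformly on $\vI$ by \eqref{eq:dollard2}). Since $\e^{\mp \i \theta}$ is a unitary multiplication operator on $\widetilde{\mathcal H}_{\mathcal I}$, once we identify $W_{\rm do}^\pm$ with the modified wave operator $W^\pm\e^{\pm \i \theta}$ we obtain both unitarity and \eqref{eq:54dol} automatically.

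The core task is therefore to prove, for a dense subclass (say $h\in C_{\mathrm c}^2(\vI\times S)$), the asymptotic identity
\begin{equation*}
  \lim_{t\to \infty}\bigl\|U_{\rm do}^\pm(t)h-\tilde U^\pm(t)\bigl(\e^{\pm \i \theta} h\bigr)\bigr\|_{\vH}=0,
\end{equation*}
where $\tilde U^\pm(t)$ denotes the stationary phase leading term of $U^\pm(t)$ as extracted in the proof of Theorem~\ref{thm:14.3.12.18.20} (cf.\ Lemma~\ref{lem:14.4.29.22.28}). In the Dollard setting with $|\d r|=1$ the stationary point for the oscillatory integral defining $U^\pm(t)h$ is determined by $\lambda=(r-r_0)^2/(2t^2)+\lambda_0(\sigma)$, i.e.\ $r-r_0=t\sqrt{2(\lambda-\lambda_0(\sigma))}$. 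Substituting this into the non-integrable but asymptotically tractable correction
\begin{equation*}
  \int_{r_0}^r (b-b_{\rm do})(s,\sigma)\,\d s \;+\; \theta(\lambda,\sigma),\qquad r-r_0= t\sqrt{2(\lambda-\lambda_0(\sigma))},
\end{equation*}
and using $b_{\rm do}-b_{\rm sr}=(\lambda_0(\sigma)-q_1)/\sqrt{2(\lambda-\lambda_0(\sigma))}$, the phase $K_{\rm sr}$ of \eqref{eq:16091819} picks up the additional term $-\tfrac{t}{r-r_0}\int_{r_0}^r(q_1-\lambda_0)(s,\cdot)\,\d s$. This is precisely the difference $K_{\rm do}-K_{\rm sr}$ in \eqref{17053120}, so the Dollard free dynamics $U_{\rm do}^\pm(t)$ is exactly the stationary phase leading term of the phase-modified dynamics $h\mapsto U^\pm(t)(\e^{\pm \i \theta}h)$.

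Once the above comparison is in place, Theorem~\ref{thm:14.3.12.18.20} yields existence of $W_{\rm do}^\pm h=\lim_{t\to \infty}\e^{\pm \i t H}U_{\rm do}^\pm(t)h$ on the dense subclass and the identification $W_{\rm do}^\pm=W^\pm \e^{\pm \i \theta}$. Together with the asymptotic isometry $\|U_{\rm do}^\pm(t)h\|_{\vH}\to \|h\|_{\widetilde{\vH}_{\vI}}$ (which follows from the explicit form \eqref{17053120} via the co-area formula and the substitution $\lambda=(r-r_0)^2/(2t^2)+\lambda_0$), a density/contraction argument extends the limit to all $h\in\widetilde{\vH}_{\vI}$ and shows that $W_{\rm do}^\pm$ is unitary onto $\vH_{\vI}$. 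Dualizing $W_{\rm do}^\pm=W^\pm\e^{\pm \i \theta}$ and using $F^\pm=(W^\pm)^*$ yields \eqref{eq:54dol}.

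The main obstacle will be the careful stationary phase estimate that produces the Dollard modifier $-\tfrac{t}{r-r_0}\int_{r_0}^r(q_1-\lambda_0)\,\d s$ with a remainder vanishing in $\vH$-norm. Unlike the short-range case, this modifier is non-trivial and must be kept in the phase because the difference $b-b_{\rm sr}$ is only $O(r^{-(1+\epsilon)/2})$ and therefore not integrable in $r$ for small $\epsilon$; one must justify replacing the true phase integral $\int_{r_0}^r b\,\d s$ on the stationary manifold by $(r-r_0)\sqrt{2(\lambda-\lambda_0(\sigma))}-\tfrac{t}{r-r_0}\int_{r_0}^r(q_1-\lambda_0)\,\d s$ up to an error tending to $0$ uniformly on the support of $h$, and similarly control the spherical derivatives of this error so that the stationary phase remainder bound (analogous to that in Lemma~\ref{lem:14.4.29.22.28}) continues to apply. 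The regularity of $q_1^{\rm ex}$ built into Condition~\ref{cond:12.6.2.21.13c} and the smoothness upgrade of Condition~\ref{cond:12.6.2.21.13bb} are exactly what is needed to carry out this computation.
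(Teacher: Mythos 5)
Your proposal follows essentially the same route as the paper: reduce to Theorem~\ref{thm:14.3.12.18.20} via the phase modification $\theta$ from Corollary~\ref{cor:dollard}(3), establish the comparison $\|U_{\rm do}^\pm(t)\check h-U^\pm_0(t)h\|\to 0$ with $h=\e^{\pm\i\theta}\check h$ by Taylor-expanding the stationary point $\lambda_c\approx\lambda_0(\sigma)+(r-r_0)^2/(2t^2)$ and the phase to identify the Dollard modifier $-\tfrac{t}{r-r_0}\int_{r_0}^r(q_1-\lambda_0)\,\d s$, and then conclude existence, unitarity, and $F_{\rm do}^\pm=(W_{\rm do}^\pm)^*$ by density and the contraction property. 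The only thing the paper adds to this plan is the rigorous bookkeeping (the restriction to $\Omega^2_c(t)$, the explicit $O(t^{-(1+\epsilon)/2})$ error estimates for $\lambda_c$, $\partial_r\lambda_c$ and $K_{\rm do}$, and the use of \eqref{eq:dollard2} together with \eqref{eq:twosidedest}), which your outline correctly flags as the remaining technical obstacle.
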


Similarly to the previous subsection, 
the standard hyperbolic space and the Euclidean space fit also into this framework 
with a positive constant $\lambda_0(\sigma)\equiv \lambda_0>0$
and the zero constant $\lambda_0(\sigma)\equiv 0$, respectively.

\section{Proofs}\label{subsec:Time-dependent
wave operator}

\subsection{Leading asymptotics of comparison dynamics}
Here we study basic properties of 
the comparison dynamics 
$U^\pm(t)$ defined by \eqref{eq:14.3.12.20.22}.
The main result of this subsection is Lemma~\ref{lem:14.4.29.22.28},
which extracts the leading asymptotics of $U^\pm(t)h$ as $t\to\infty$.
Actually, the integrand of \eqref{eq:14.3.12.20.22} has an oscillatory factor with phase
\begin{align}
\Theta_\pm=\pm\Theta;\quad 
\Theta(\lambda,t,r,\sigma)
=\int^r_{r_0} \tilde b_\lambda(s,\sigma) \,\d s-t\lambda,
\label{eq:160920}
\end{align}
and we are going to employ the 
one-dimensional stationary phase argument; see \cite{II} 
for a somewhat related  analysis in higher dimensions. Throughout  this
subsection we  assume Condition~\ref{cond:12.6.2.21.13c}, 
and all  proofs  are given only for
the upper sign, since this is sufficient due to time reversal 
invariance.  

For 
technical reasons it turns out to be appropriate  to use  the
stationary phase method with a modified phase
depending on $h$. We will 
choose a constant $r_1\ge r_0$ depending 
on $\mathop{\mathrm{supp}}h$ 
and consider a stationary point of the function
\begin{align}
\Theta_1(\lambda,t,r,\sigma)
=\int^r_{r_1} \tilde b_\lambda(s,\sigma) \,\d s-t\lambda,
\label{eq:160920b}
\end{align}
instead of the function $\Theta$ of \eqref{eq:160920}.
We  study  such  stationary point in the following lemma in which $h$
at most enters in a disguised form:
The parameters $\lambda_1>\lambda_0$ and $D\subseteq S$ 
in the lemma 
  will later be chosen  by the requirement 
  $\mathop{\mathrm{supp}}h\subseteq (\lambda_1,\infty)\times D$ after having fixed  $h\in C^1_{\mathrm
    c}(\mathcal I\times S)$.

\begin{lemma}\label{lem:14.4.29.22.28bb} 
Let $\lambda_1>\lambda_0$  and 
$D\subseteq S$ be a relatively compact open subset. 
Fix $r_1\ge r_0$  such that
\begin{subequations}
\begin{align}
[r_1,\infty)\times \overline D\subseteq E,\quad 
r_1\geq  r_{\lambda_1}=\sup_{\lambda\ge \lambda_1}r_\lambda,
\label{eq:16092114}
\end{align} 
where $r_\lambda$ is defined in agreement with \eqref{eq:14.6.29.23.46}.
With $\Theta_1$ given by \eqref{eq:160920b} in terms of this $r_1$ we set  
\begin{align}
\Omega_c=\bigl\{(t,r,\sigma)\in (0,\infty)\times (r_1, \infty)\times D\,\big|\,
\partial_\lambda\Theta_1(\lambda_1,t,r,\sigma)
>0\bigr\}.
\label{eq:17060819}
\end{align} 
\end{subequations}

\begin{subequations}
For any $(t,r,\sigma)\in \Omega_c$
there exists  a unique  solution $\lambda_c >\lambda_1$ to the equation
\begin{align}\label{eq:critical}
(\partial_\lambda\Theta_1)(\lambda_c,t,r,\sigma)=0.
\end{align} 
This  solution $\lambda_c=\lambda_c(t,r,\sigma)$ 
is $C^1$  (more generally $C^k$ if $q_1$ is $C^k$) 
and satisfies 
\begin{align}
\partial_t\lambda_c
+b_{\lambda_c}\partial_r\lambda_c=0,\quad
  \partial_r\lambda_c>0.
\label{eq:derilamdab}
\end{align}
In addition, 
there exist constants $c,C>0$ such that 
for any $(t, r,\sigma)\in \Omega_c$
\begin{align}
  \label{eq:twosidedest}
  c(r-r_1)^2/t^2\le\lambda_c(t,r,\sigma)-\lambda_0\le C(r-r_1)^2/t^2.
\end{align} 
 \end{subequations}

\begin{subequations}
Moreover, letting 
\begin{align}
K_1(t, r,\sigma)&=\Theta_1(\lambda_c(t,r,\sigma),t,r,\sigma)
\quad \text{for }(t, r,\sigma)\in \Omega_c,
\label{eq:16092123}
\end{align} 
  the following identities hold:
\begin{align}
  \label{eq:Kder}
  \partial_tK_1=-\lambda_c, \quad \partial_r K_1=\tilde b_{\lambda_c}.
\end{align}
In particular   $K_1$ is a solution to the Hamilton--Jacobi equation
\begin{align}\label{eq:hamilJacobi_equ}
\partial_t K_1+\tfrac 12(|\mathrm dr|\partial_r K_1)^2+q_1=0.
\end{align} 
\end{subequations}
\end{lemma}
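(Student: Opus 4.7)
The plan is straightforward calculus: exploit that on $[r_1,\infty)\times\overline D$ we have $\eta=\eta_\lambda=1$ (since $r_1\geq r_{\lambda_1}\geq r_\lambda$ for all $\lambda\geq\lambda_1$), so $\tilde b_\lambda(s,\sigma)=|\mathrm dr|^{-1}\sqrt{2(\lambda-q_1)}$. Then
\begin{align*}
\partial_\lambda\Theta_1(\lambda,t,r,\sigma)=\int_{r_1}^r\frac{|\mathrm dr|^{-1}(s,\sigma)}{\sqrt{2(\lambda-q_1(s,\sigma))}}\,\d s-t
\end{align*}
is strictly decreasing in $\lambda\in[\lambda_1,\infty)$ with $\partial_\lambda^2\Theta_1<0$, and tends to $-t<0$ as $\lambda\to\infty$. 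Hence by the intermediate value theorem, for each $(t,r,\sigma)\in\Omega_c$ (where the value at $\lambda=\lambda_1$ is positive) there is a unique root $\lambda_c>\lambda_1$. The implicit function theorem with $\partial_\lambda^2\Theta_1\neq0$ gives the $C^k$-regularity inherited from $q_1$.

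Implicit differentiation of $\partial_\lambda\Theta_1(\lambda_c,t,r,\sigma)=0$ yields $\partial_t\lambda_c=1/\partial_\lambda^2\Theta_1$ and $\partial_r\lambda_c=-(\partial_\lambda\tilde b_{\lambda_c})(r,\sigma)/\partial_\lambda^2\Theta_1$, where the right-hand sides are evaluated at $\lambda=\lambda_c$. The sign of $\partial_r\lambda_c$ is positive, since numerator and denominator have opposite signs. The identity $\partial_t\lambda_c+b_{\lambda_c}\partial_r\lambda_c=0$ then reduces to the pointwise algebraic relation $\partial_\lambda\tilde b_\lambda\cdot b_\lambda=1$, which is immediate from the formulas for $\tilde b_\lambda$ and $b_\lambda$ in this region.

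For the two-sided estimate \eqref{eq:twosidedest}, I would use the defining equation $t=\int_{r_1}^r|\mathrm dr|^{-1}/\sqrt{2(\lambda_c-q_1)}\,\d s$ together with the bounds $c\leq|\mathrm dr|\leq C$ (from Conditions~\ref{cond:12.6.2.21.13} and \ref{cond:12.6.2.21.13a}) and the key comparability $\tfrac12(\lambda_c-\lambda_0)\leq\lambda_c-q_1\leq C'(\lambda_c-\lambda_0)$ valid for $\lambda_c\geq\lambda_1$ and $r\geq r_1$. The lower bound on $\lambda_c-q_1$ is exactly \eqref{eq:14.6.29.23.46} at $\lambda=\lambda_c$; the upper bound follows by writing $(\lambda_c-q_1)/(\lambda_c-\lambda_0)\leq 1+(\lambda_0+\|q_1\|_\infty)/(\lambda_1-\lambda_0)$ using global boundedness of $q_1$. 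Substituting these bounds into the defining equation gives $t$ comparable to $(r-r_1)/\sqrt{\lambda_c-\lambda_0}$, from which the estimate \eqref{eq:twosidedest} follows. The main (minor) obstacle is ensuring the upper comparability is uniform in $\lambda_c\geq\lambda_1$; this depends crucially on working above $\lambda_1>\lambda_0$ rather than near $\lambda_0$.

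Finally, for $K_1(t,r,\sigma)=\Theta_1(\lambda_c,t,r,\sigma)$, the chain rule gives $\partial_tK_1=(\partial_\lambda\Theta_1)\partial_t\lambda_c+(\partial_t\Theta_1)|_{\lambda_c}$; the first term vanishes by the stationarity \eqref{eq:critical} and the second equals $-\lambda_c$. Similarly $\partial_rK_1=\tilde b_{\lambda_c}(r,\sigma)$. Substituting into $\partial_tK_1+\tfrac12(|\mathrm dr|\partial_rK_1)^2+q_1$ and using $|\mathrm dr|\tilde b_{\lambda_c}=\sqrt{2(\lambda_c-q_1)}$ in the working region, the Hamilton–Jacobi equation \eqref{eq:hamilJacobi_equ} reduces to the trivial identity $-\lambda_c+(\lambda_c-q_1)+q_1=0$.
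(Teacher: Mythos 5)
Your proposal is correct and follows essentially the same route as the paper: unique root of the strictly decreasing $\partial_\lambda\Theta_1$ via the intermediate value theorem, regularity by the implicit function theorem, implicit differentiation for \eqref{eq:derilamdab}, the identity \eqref{eq:14.6.29.23.46} for \eqref{eq:twosidedest}, and the chain rule plus stationarity for \eqref{eq:Kder} and \eqref{eq:hamilJacobi_equ}. The only cosmetic differences are that you verify the transport identity via the algebraic relation $b_\lambda\partial_\lambda\tilde b_\lambda=1$ rather than by substituting the explicit integral formulas for $\partial_t\lambda_c$ and $\partial_r\lambda_c$, and you spell out the uniform two-sided comparability $\lambda_c-q_1\asymp\lambda_c-\lambda_0$ on $\{\lambda_c>\lambda_1\}$ which the paper leaves implicit.
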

\begin{proof}
We first note that thanks to \eqref{eq:16092114}
we have the following  expressions when $\lambda\geq \lambda_1$ and
$(r,\sigma)\in [r_1,\infty)\times \overline D$:
\begin{subequations}
\begin{align}
\Theta_1(\lambda,t,r,\sigma)
&=
\int^r_{r_1} |\mathrm dr|^{-1}[2(\lambda-q_1)]^{1/2}(s,\sigma) \,\d s-t\lambda,
\label{eq:1609212344}
\\
\partial_\lambda\Theta_1(\lambda,t,r,\sigma)
&=
\int^r_{r_1} |\mathrm dr|^{-1}[2(\lambda-q_1)]^{-1/2}(s,\sigma) \,\d s-t,
\label{eq:1609211456}\\
\partial_\lambda^2\Theta_1(\lambda,t,r,\sigma)
&=-\int^r_{r_1}  |\mathrm dr|^{-1} [2(\lambda-q_1)]^{-3/2} \,\d s.\label{eq:1609211456c}
\end{align} 
\end{subequations}
For any fixed $(t,r,\sigma)\in \Omega_c$ 
the quantity
\eqref{eq:1609211456} is positive for $\lambda=\lambda_1$, 
monotonically decreasing in $\lambda>\lambda_1$,
and takes negative values for large $\lambda>\lambda_1$.
Hence there exists a unique solution $\lambda_c=\lambda_c(t,r,\sigma)$ 
to \eqref{eq:critical}, and  by
the implicit function theorem it is $C^1$.

Noting the expression \eqref{eq:1609211456}, 
we can differentiate \eqref{eq:critical}, and 
obtain formulas
\begin{align}\label{eq:derilamda}
\begin{split}
\partial_t\lambda_c&=-\biggl(\int^r_{r_1} 
|\mathrm dr|^{-1}[2(\lambda_c-q_1)]^{-3/2} \,\d s\biggr)^{-1},
\\
  \partial_r\lambda_c&
=|\mathrm dr|^{-1}[2(\lambda_c-q_1)]^{-1/2} 
\biggl(\int^r_{r_1} 
|\mathrm dr|^{-1}[2(\lambda_c-q_1)]^{-3/2}\,\d s\biggr)^{-1},
\end{split}
\end{align} 
which verifies \eqref{eq:derilamdab}.
The bounds in \eqref{eq:twosidedest} are verified 
easily by inserting $\lambda=\lambda_c$ in \eqref{eq:1609211456}
(taken equal to zero)
and estimating the integral using \eqref{eq:14.6.29.23.46}. (In fact by
this argument the upper
bound of \eqref{eq:twosidedest} holds with $C=1$.)

We can verify the formulas in \eqref{eq:Kder}
by differentiating the definition \eqref{eq:16092123}
and using \eqref{eq:critical} and \eqref{eq:1609212344}.
Obviously  \eqref{eq:hamilJacobi_equ} is a  consequence 
of \eqref{eq:Kder}.
\end{proof}

We can now show a basic property of $U^\pm(t)h$ defined by \eqref{eq:14.3.12.20.22}.

\begin{lemma}\label{lem:14.4.29.22.28b} 
For each $t\geq 0$ and  $h\in C^1_{\mathrm c}(\mathcal I\times S)$
the functions $U^\pm(t)h$ belong to $\mathcal H$. Moreover
$U^\pm(\cdot)h$ is a continuous $\mathcal H$-valued function.
\end{lemma}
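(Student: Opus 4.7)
\emph{Proof plan.}
Only the upper sign needs attention by the time-reversal invariance $\overline{U^+(t)h}=U^-(t)\bar h$. Working in the extended spherical coordinates $(r,\sigma)\in(r_0,\infty)\times S$ on $E^{\mathrm{ex}}$, the factor $\e^{-\tfrac12\int_{r_0}^r\mathop{\mathrm{div}}\tilde\omega\,\d s}$ in the definition \eqref{eq:gen1B} of $\phi^+_\lambda$ exactly compensates the Radon--Nikodym derivative $J(r,\sigma):=\e^{\int_{r_0}^r\mathop{\mathrm{div}}\tilde\omega\,\d s}$ coming from $\d\tilde{\mathcal A}_r=J\,\d\tilde{\mathcal A}$ via \eqref{eq:trans2}. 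Setting $A(\lambda,r,\sigma):=\eta_\lambda(r)[2|\d r|^2(\lambda-q_1)]^{-1/4}$, the assertion $U^+(t)h\in\vH$ thereby reduces to showing
\[
F(t,r,\sigma):=\int_{\vI} A(\lambda,r,\sigma)\,h(\lambda,\sigma)\,\e^{\i\Theta_+(\lambda,t,r,\sigma)}\,\d\lambda\in L^2\bigl((r_0,\infty)\times S;\,\d r\otimes\d\tilde{\mathcal A}\bigr),
\]
with a bound locally uniform in $t\geq 0$, where $\Theta_+$ is as in \eqref{eq:160920}. Once this $L^2$-bound is established, $\vH$-continuity in $t$ will follow by dominated convergence applied under the integral in \eqref{eq:14.3.12.20.22}, since the pointwise convergence as $t\to t_0$ is immediate from $h\in C^1_{\mathrm c}$.

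The core mechanism is one-dimensional stationary phase in $\lambda$ with $(t,r,\sigma)$ as parameters, prepared in Lemma~\ref{lem:14.4.29.22.28bb}. Pick $\lambda_1>\lambda_0$ and a relatively compact open $D\subseteq S$ with $\supp h\subseteq(\lambda_1,\infty)\times D$, and fix $r_1$ as in \eqref{eq:16092114}. Writing $\Theta_+=\Theta_1+g(\lambda,\sigma)$ with $g(\lambda,\sigma):=\int_{r_0}^{r_1}\tilde b(s,\sigma)\,\d s$ and absorbing the $C^1$ unimodular factor $\e^{\i g}$ into the amplitude, the critical point structure of the resulting phase $\Theta_1$ is precisely that described in Lemma~\ref{lem:14.4.29.22.28bb}. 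On $\Omega_c$ and in the sub-region where $h(\lambda_c(t,r,\sigma),\sigma)\neq 0$, the asymptotic $|\partial_\lambda^2\Theta_+|=|\partial_\lambda^2\Theta_1|\asymp r$ from \eqref{eq:1609211456c} combined with the standard stationary phase formula yields
\[
|F(t,r,\sigma)|^2\lesssim r^{-1}\bigl|h(\lambda_c(t,r,\sigma),\sigma)\bigr|^2 + (\text{integration-by-parts remainder}).
\]
By \eqref{eq:twosidedest}, the $r$-values for which $\lambda_c$ lies in the $\lambda$-support of $h$ lie in a strip $r\in(c_1t,c_2t)$ at fixed $t,\sigma$; the change of variables $r\mapsto\lambda_c$ on this strip, with $r\,\partial_r\lambda_c\asymp 1$ by \eqref{eq:derilamda} and \eqref{eq:twosidedest}, converts $r^{-1}\,\d r$ into a bounded multiple of $\d\lambda_c$, and the stationary-phase contribution to $\|F\|_{L^2}^2$ is controlled by $C\|h\|_{\widetilde{\vH}_\vI}^2$ uniformly in $t$ on compact sets.

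In the complement of $\Omega_c$, or where $h(\lambda_c,\sigma)=0$, the phase $\Theta_+$ is non-stationary in $\lambda$ on $\supp h$: one verifies $|\partial_\lambda\Theta_+|\geq c(1+r+t)$ uniformly on $\supp h$ using the expression \eqref{eq:1609211456} and the definition \eqref{eq:14.6.29.23.46} of $r_\lambda$. Iterated integration by parts in $\lambda$, justified by $h\in C^1_{\mathrm c}$ and by the smoothness of $A$ and $\tilde b$, yields $|F(t,r,\sigma)|\leq C_N(1+r+t)^{-N}$ for every $N$, which is $L^2$ over the non-stationary region. For $r\leq r_1$ the integrand is bounded and supported in $(r_0,r_1)\times D$, which has finite measure; this contribution is trivial.

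The principal technical obstacle is the bookkeeping at the boundary of $\Omega_c$, where $\lambda_c$ approaches the edge of the $\lambda$-support of $h$ and the stationary-phase and integration-by-parts regimes must be matched without double-counting. A smooth cutoff $\varphi(\lambda-\lambda_c(t,r,\sigma))$ routes a neighbourhood of $\lambda_c$ through the stationary-phase estimate while sending the remainder through the integration-by-parts estimate, and the $C^1$-assumption on $h$ is just enough to justify one such integration by parts uniformly in $t$. Once the locally uniform bound $\|U^+(t)h\|_\vH\leq C(K)\|h\|_{\widetilde{\vH}_\vI}$ for $t\in K$ compact is in hand, continuity of $t\mapsto U^+(t)h$ in $\vH$ follows by dominated convergence as indicated.
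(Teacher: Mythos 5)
The paper's proof is substantially shorter and rests on a single simplifying observation that your proposal misses: for $t$ in a fixed compact interval $[0,T]$ and $r\geq R_n$ large (depending on $T$), the phase $\Theta_1$ has \emph{no} critical point in $\lambda$ on $\supp h$. Indeed, from \eqref{eq:1609211456}, $\partial_\lambda\Theta_1\asymp (r-r_1)-t$, so once $R_n\gg T$ one gets $\partial_\lambda\Theta_1\geq c(|\lambda|+1)^{-1/2}r$ uniformly in $t\leq T$, $\lambda\geq\lambda_1$, $\sigma\in\overline D$. A single integration by parts in $\lambda$ then yields an extra factor $r^{-1}$ for $\bar\chi_n U^+(t)h$, and since the amplitude $\Phi(\lambda,\cdot)$ is uniformly bounded in $B^*$, this places $\bar\chi_n U^+(t)h$ in $\mathcal H$; the remaining $\chi_n U^+(t)h$ is trivially in $\mathcal H$. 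No stationary phase is needed at all. Your proposal instead sets up the full stationary-phase machinery of the proof of Lemma~\ref{lem:14.4.29.22.28}, including a strip analysis of the critical region, a change of variables $r\mapsto\lambda_c$, and matching of regimes at $\partial\Omega_c$. While this is not wrong in spirit, it substantially overshoots what is required here and introduces difficulties that the paper deliberately sidesteps.

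Two points in your argument are problematic as stated. First, the lower bound $|\partial_\lambda\Theta_+|\geq c(1+r+t)$ ``uniformly on $\supp h$'' in the non-stationary region is not correct: by \eqref{eq:1609211456c} the relevant lower bound near $\partial\Omega_c$ is only $|\partial_\lambda\Theta_1|\gtrsim\max(t,\,r-r_1)$, which degenerates when both $t$ and $r-r_1$ are small (the bound you want, $\gtrsim r$, does hold once $r$ is bounded away from $r_1$, and this is what actually matters for the $L^2$ conclusion). Second, invoking ``the standard stationary phase formula'' with remainder control requires more regularity of the amplitude than the hypothesis $h\in C^1_{\mathrm c}$ provides; the paper's Lemma~\ref{lem:14.4.29.22.28}, where stationary phase is genuinely used, assumes $h\in C^2_{\mathrm c}$ and still needs the van der Corput lemma and a cutoff width to be optimized. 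You flag the boundary-matching issue as ``the principal technical obstacle,'' but the proposed fix (a smooth cutoff $\varphi(\lambda-\lambda_c)$ with ``one such integration by parts'') does not resolve it without specifying the cutoff width and checking the resulting powers of $r$ and $t$. In short, the approach can be repaired but is the wrong tool for this lemma; the key observation you are missing is that the critical region is automatically bounded in $r$ for bounded $t$, making the whole stationary-phase analysis unnecessary.
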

\begin{proof}
Let $h\in C^1_{\mathrm c}(\mathcal I\times S)$ and $T>0$ be given. It
suffices to show that the function $[0,T]\ni t\to  U^+(t)h\in \mathcal H$ is
well-defined and 
continuous.

We pick   a  number $\lambda_1>\lambda_0$ and a relatively compact open subset
$D\subseteq S$ such that
\begin{align}
\mathop{\mathrm{supp}}h\subseteq (\lambda_1,\infty)\times D.
\label{eq:17060820}
\end{align}
We fix $r_1\ge r_0$ satisfying
\eqref{eq:16092114} in agreement  with Lemma~\ref{lem:14.4.29.22.28bb} with input given by  
 the above $\lambda_1$ and $D$. 

Let us write for short, using  \eqref{eq:160920b},  
\begin{align}
U^+(t)h(r,\sigma)
=
\int_{\mathcal I} 
\e^{\i \Theta_1(\lambda,t,r,\sigma)}
\Phi(\lambda,r,\sigma)
\,\d \lambda,
\label{eq:1609251aa}
\end{align}
where 
\begin{align*}
\Phi(\lambda,r,\sigma)
=(2\pi\i)^{-1}
\eta_\lambda(r)[2|\mathrm dr|^2 (\lambda-q_1)]^{-1/4}
\mathrm e^{-\int_{r_0}^{r}\mathop{\mathrm{div}}\tilde\omega(s,\sigma)/2\,\mathrm{d}s}
\parb{\e^{\i (\Theta-\Theta_1)}h}(\lambda,\sigma).
\end{align*}
Note that indeed $\Theta-\Theta_1$ is  independent of $(t,r)$,
and we omit these variables:
\begin{align*}
(\Theta-\Theta_1)(\lambda,\sigma)
&=
\int^{r_1}_{r_0} \tilde b_\lambda(s,\sigma) \,\d s.
\end{align*} 
We also note that $\Phi(\lambda,{}\cdot{})$ belongs to $B^*$ uniformly 
in $\lambda\in\mathcal I$.
With an extra decay factor $r^{-\delta}$,
$\delta>1/2$, obviously we obtain a vector in $\mathcal H$. In
particular  $\chi_n U^+(t)h$ for any $n$, and this leads
us to consider $\bar\chi_n U^+(t)h$ only. We choose and fix 
  $n$  so large that $R_n=2^n>r_1$  and that for  some $c,C>0$  the bounds
\begin{align*}
\partial_\lambda\Theta_1\geq c (|\lambda|+1)^{-1/2}\,r,\quad 
|\partial_\lambda^2\Theta_1|\le Cr
\end{align*} hold  for $\lambda\geq \lambda_1$, $t\leq
T$, $r\geq R_n$ and
$\sigma\in \overline D$. These bounds are immediate from  the formulas
\eqref{eq:1609211456} and \eqref{eq:1609211456c}.
 We insert $\e^{\i \Theta_1}
=   (\i \partial_\lambda \Theta_1)^{-1}\partial_\lambda\e^{\i \Theta_1}$ in \eqref{eq:1609251aa}
   and do  a single
 integration    by parts, which is  legitimate since
 $\e^{\i(\Theta-\Theta_1)}h\in C^1$. The  bounds  provide  an extra decay factor $r^{-1}$
for $\bar\chi_n U^+(t)h$.
Hence we have shown that $ U^+(t)h\in \mathcal H$ for $t\leq T$.  

The
continuity  statement
follows from the resulting representation of $U^\pm(t)h$ after doing the  
integration by parts; we omit the details.
\end{proof}

Now  letting  $h\in C^1_{\mathrm c}(\mathcal I\times S)$ be given we aim at 
extracting  the  leading term of $U^\pm(t)h$ as $t\to\infty$. 
As in the proof of Lemma \ref{lem:14.4.29.22.28b} we can take a 
number $\lambda_1>\lambda_0$ and a relatively compact open subset
$D\subseteq S$ such that \eqref{eq:17060820} is fulfilled. Again we fix $r_1\ge r_0$ satisfying
\eqref{eq:16092114} in agreement  with Lemma~\ref{lem:14.4.29.22.28bb} with input given by  
 the above $\lambda_1$ and $D$. 
 Let $\lambda_c=\lambda_c(t,r,\sigma)$ 
be the  solution to \eqref{eq:critical}.
Then we set
\begin{align*}
 \Omega_c(t)&=\{(r,\sigma)|\, (t,r,\sigma)\in\Omega_c\};\, t>0, 
\end{align*}
and 
\begin{align}
\begin{split}
U^\pm_0(t)h(r,\sigma)&
=(2\pi)^{-1/2}\mathrm e^{\mp3\mathrm i\pi/4}
1_{\Omega_c(t)}(r,\sigma)
\mathrm e^{\pm\i K(t, r,\sigma)}
\mathrm e^{-\int_{r_0}^{r}(\mathop{\mathrm{div}}\tilde\omega)(s,\sigma)/2\,\mathrm{d}s}
\\&\phantom{{}={}}{}\cdot
(\partial_r\lambda_c(t,r,\sigma))^{1/2}
h(\lambda_c(t,r,\sigma),\sigma),
\end{split}
\label{eq:1609220}
\end{align}
where
\begin{align*}
K(t, r,\sigma)=\Theta(\lambda_c(t,r,\sigma),t,r,\sigma).
\end{align*} 
The
factor $1_{\Omega_c(t)}$ is essentially redundant, 
since the support of the factor
$h(\lambda_c(t,\cdot),\cdot)$ is contained in $\Omega_c(t)$ which in turn is
an easy consequence of \eqref{eq:twosidedest}. In fact it is not
difficult to show using \eqref{eq:twosidedest} and \eqref{eq:derilamda}
 that $U^\pm_0(t)h\in
C^1_{\mathrm c}(M)$ for any $t>0$. 
We also note that  the right-hand side of \eqref{eq:1609220}
depends on a choice of parameters $\lambda_1$, $D$ and $r_1$ which  possibly
have  a 
non-linear dependence of  $h$.
Hence  the  operator-like notation $U^\pm_0(t)$ is somewhat  abuse of
notation, however we prefer to use it for simplicity.

\begin{lemma}\label{lem:14.4.29.22.28} 
Under the above assumptions 
$U^\pm_0(\cdot)h$ are 
continuously differentiable $\vH$-valued functions
in $t>0$,
and satisfy that for all $t>0$  
\begin{align}
\|U^\pm_0(t)h\|_{\mathcal H}=\|h\|_{\widetilde{\mathcal H}_{\mathcal I}},
\label{eq:16092213}
\end{align}
and 
\begin{align}
  \begin{split}
\tfrac{\mathrm d}{\mathrm dt}U^\pm_0(t)h
&=-\mathrm iG^\pm(t)U^\pm_0(t)h;\quad
G^\pm(t)
=
{\mathop{\mathrm{Re}}\bigl(\tilde b_{\lambda_c}A\bigr)}
\mp\tfrac12|\d r|^2\tilde b_{\lambda_c}^2
\pm q_1,
\label{eq:14.5.5.12.29B} 
  \end{split}
\end{align}
respectively. 
Moreover, 
\begin{align}
U^\pm(t)h=U^\pm_0(t)h
+O_{\mathcal H}(t^{-1/8}) \quad\text{as }t\to \infty.
\label{eq:14.5.9.12.13BB}
\end{align} 
\end{lemma}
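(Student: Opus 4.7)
All three assertions stem from a one-dimensional stationary phase analysis of the oscillatory integral \eqref{eq:1609251aa} representing $U^+(t)h$, the relevant critical point being $\lambda_c(t,r,\sigma)$ from Lemma~\ref{lem:14.4.29.22.28bb}. By the time-reversal symmetry $\overline{U^+(t)h}=U^-(t)\bar h$ it suffices to treat the upper sign. Continuous differentiability of $U^+_0(\cdot)h$ in $t>0$ is immediate from the explicit formula \eqref{eq:1609220} and the $C^1$ regularity of $\lambda_c$ supplied by Lemma~\ref{lem:14.4.29.22.28bb}.

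The isometry \eqref{eq:16092213} is a direct change of variable: using the co-area formula \eqref{eq:co_area} and the identity $\d\tilde{\mathcal A}_r=\exp\!\bigl(\int_{r_0}^r\operatorname{div}\tilde\omega\,\d s\bigr)\d\tilde{\mathcal A}$ quoted in the excerpt, the squared factor $\exp\!\bigl(-\int_{r_0}^r\operatorname{div}\tilde\omega\,\d s\bigr)$ in $|U_0^+(t)h|^2$ exactly cancels the Jacobian, and since $\partial_r\lambda_c>0$ by \eqref{eq:derilamdab}, the substitution $\lambda=\lambda_c(t,r,\sigma)$ on each $\sigma$-slice maps the $r$-support bijectively onto $\supp_\lambda h(\cdot,\sigma)$ and delivers $(2\pi)^{-1}\cdot 2\pi\,\|h\|^2_{\widetilde{\mathcal H}_{\mathcal I}}$. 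The transport identity \eqref{eq:14.5.5.12.29B} is then an algebraic check: writing $U_0^+(t)h=e^{iK}F$, \eqref{eq:Kder} gives $\partial_tU_0^+=e^{iK}(-i\lambda_cF+\partial_tF)$, while applying $A=-i\omega+\tfrac{i}{2}\operatorname{div}\omega$ together with $\nabla^rK=|\d r|^2\tilde b_{\lambda_c}=b_{\lambda_c}$ yields
\[
-iG^+(e^{iK}F)=e^{iK}\bigl[-iq_1F-\tfrac{i}{2}|\d r|^2\tilde b_{\lambda_c}^2F-i\tilde b_{\lambda_c}AF-\tfrac{1}{2}(\nabla^r\tilde b_{\lambda_c})F\bigr].
\]
The phase terms match by $q_1+\tfrac12|\d r|^2\tilde b_{\lambda_c}^2=\lambda_c$, and the residual relation $\partial_tF=-i\tilde b_{\lambda_c}AF-\tfrac12(\nabla^r\tilde b_{\lambda_c})F$ reduces, using the explicit form of $F$, to the characteristic law \eqref{eq:derilamdab} supplemented by its $r$-derivative; this is precisely the half-density transport law for $(\partial_r\lambda_c)^{1/2}$ along the field $\tilde b_{\lambda_c}\partial_r$.

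The main step is the asymptotic \eqref{eq:14.5.9.12.13BB}, where the $C^2$ regularity of $h$ (inherited via Theorem~\ref{thm:14.3.12.18.20}) is used. Split the $\lambda$-integral in \eqref{eq:1609251aa} into a window $|\lambda-\lambda_c|<\delta$ and its complement. On the complement, \eqref{eq:1609211456} yields $|\partial_\lambda\Theta_1|\gtrsim\delta\,|\partial_\lambda^2\Theta_1|$, and two integrations by parts give pointwise decay in terms of $(\delta|\partial_\lambda^2\Theta_1|)^{-1}$ and $\delta^{-1}|\partial_\lambda^2\Theta_1|^{-2}$, with the boundary terms at $|\lambda-\lambda_c|=\delta$ tracked explicitly. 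On the window, a quadratic Taylor expansion $\Theta_1\approx K_1+\tfrac12(\lambda-\lambda_c)^2\partial_\lambda^2\Theta_1$ followed by Fresnel integration reproduces $U_0^+(t)h$ as the leading term, with residuals controlled by the cubic remainder of $\Theta_1$ and the $C^1$-modulus of $\Phi$. By \eqref{eq:twosidedest} the effective support in $r$ of both $U^+(t)h$ and $U_0^+(t)h$ lies at $r-r_1\sim t$, where by \eqref{eq:1609211456c} one has $|\partial_\lambda^2\Theta_1|\sim r\sim t$; the $\mathcal H$-norm is evaluated against a volume region of $r$-length $\sim t$ via the same Jacobian identity used for \eqref{eq:16092213}, and $\delta$ is chosen to balance the two sources of error, producing the stated $O(t^{-1/8})$. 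The principal obstacle is controlling the stationary-phase error \emph{uniformly in $\sigma$ and up to the boundary of $\Omega_c(t)$}, since at $\partial\Omega_c(t)$ the critical point $\lambda_c$ approaches either the threshold $\lambda_0(\sigma)$ or $+\infty$, causing $|\partial_\lambda^2\Theta_1|$ to degenerate and the amplitude $\Phi\sim[2|\d r|^2(\lambda-q_1)]^{-1/4}$ to blow up. The cut-off $\supp h\subseteq(\lambda_1,\infty)\times D$ combined with the geometric lower bound \eqref{eq:14.6.29.23.46} and the $C^2$-regularity of $h$ must be carefully combined to keep the square-integrated error on the endpoint region smaller than the contribution from the bulk.
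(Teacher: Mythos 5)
Your proposal follows the same route as the paper: a change of variables for the isometry \eqref{eq:16092213}, a factor-by-factor differentiation using \eqref{eq:Kder}, \eqref{eq:hamilJacobi_equ} and \eqref{eq:derilamdab} for the transport law \eqref{eq:14.5.5.12.29B}, and one-dimensional stationary phase around $\lambda_c$ for \eqref{eq:14.5.9.12.13BB}. Steps I and II match the paper essentially verbatim (modulo a sign in your formula $A=-i\omega+\tfrac{i}{2}\operatorname{div}\omega$; with $(p^r)^*=p^r-\mathrm i\operatorname{div}\omega$ one has $A=p^r-\tfrac{\mathrm i}{2}\operatorname{div}\omega$).

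For \eqref{eq:14.5.9.12.13BB}, however, two genuine gaps remain. First, you have misdiagnosed the obstacle. You worry about $\lambda_c\to\lambda_0(\sigma)$ causing $\partial_\lambda^2\Theta_1$ and $\Phi$ to degenerate; in fact on $\Omega_c$ the critical point $\lambda_c$ is bounded below by $\lambda_1>\lambda_0$ (it $\to\lambda_1$ as $r\downarrow r_c(t,\sigma)$, not $\lambda_0$), and the amplitude carries the factor $h(\lambda_c,\sigma)$ with $\supp h\subset(\lambda_1,\lambda_2)\times D$, so the dangerous boundary simply carries zero amplitude. What is actually needed, and what you omit, is a preliminary localization $1_{(m,M)}(r/t)$ which restricts to the bulk $r\sim t$ where the estimates $\partial_\lambda\Theta_1\sim r-t$ and $\partial_\lambda^2\Theta_1\sim -r$ are uniform; the regions $r/t\leq m$, $r/t\geq M$ and $\Omega_c(t)^c$ each need a separate integration by parts because the sign and size of $\partial_\lambda\Theta_1$ differ there. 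Second, your balancing argument is too coarse to actually deliver $O(t^{-1/8})$. The paper's decomposition at the window $I_c=[\lambda_c-t^{-\delta},\lambda_c+t^{-\delta}]$ is a \emph{four}-term split: the full Gaussian (giving $U_0^+$), the Gaussian outside the window ($O(t^{-1/2+\delta})$ after one integration by parts), the oscillatory tail outside the window ($O(t^{-1/2+\delta})$ after repeated integrations by parts using $|\partial_\lambda\Theta_1|\gtrsim t^{1-\delta}$), and the window term with the cubic phase remainder controlled by the van der Corput lemma ($O(t^{1-3\delta})$). Optimizing $\delta=3/8$ gives $t^{-1/8}$. Your sketch, which frames this as a two-way balance between "cubic remainder" and "complement integration by parts," omits the van der Corput step that makes the window term decay, and a naive window estimate (Taylor remainder times window volume times $\|F_\nu\cdot\|$) does not produce a negative power of $t$ without it.
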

\begin{proof} 
 
\textit{Step I.}\quad Since
  $\partial_r\partial_\lambda\Theta_1(\lambda_1,t,r,\sigma) =
  b_{\lambda_1}^{-1}>0$ on $\Omega_c$ we see that $\lambda_c$ as a function
  of $r$ only is defined on a half-axis, say
  $(r_c(t,\sigma),\infty)$. In fact we know from \eqref{eq:derilamda} that
  $\lambda_c(t,\cdot ,\sigma)$ is increasing.  Thanks to
  \eqref{eq:twosidedest} this  function  tends to $\infty$ 
 {as} $r\to
  \infty$.  The left end point $r=r_c(t,\sigma)$ is the biggest solution to
  the equation
  \begin{align*}
    \partial_\lambda\Theta_1(\lambda_1,t,r,\sigma)=0,
  \end{align*}  and it is easy to see
  that
  \begin{align*}
    \lambda_c(t,r,\sigma)\to \lambda_1\text{ for }r\searrow r_c(t,\sigma).
  \end{align*}

  Due to these remarks the norm identity \eqref{eq:16092213} follows
  by first writing the square of the left-hand side as an integral in
  the spherical coordinates $(r,\sigma)$ and then changing to the
  variables $(\lambda,\sigma)=(\lambda_c(t,r,\sigma),\sigma)$.
  Obviously this also verifies that $U^+_0(\cdot)h$ is $\mathcal
  H$-valued.

\smallskip
\noindent
\textit{Step I\hspace{-.1em}I.}\quad 
{Next, we show \eqref{eq:14.5.5.12.29B},
which in particular implies the continuous differentiability of 
$U_0^+(\cdot)h$.}
To compute the derivative as \eqref{eq:14.5.5.12.29B}
let us write \eqref{eq:1609220} as 
\begin{align}
\begin{split}
U^+_0(\cdot)h&
=
(2\pi)^{-1/2}\mathrm e^{-3\mathrm i\pi/4}
1_{\Omega_c(\cdot)}
\mathrm e^{\i K_1}
\mathrm e^{-\int_{r_0}^{r}(\mathop{\mathrm{div}}\tilde\omega)/2\,\mathrm{d}s}
(\partial_r\lambda_c)^{1/2}
(\mathrm e^{\i (\Theta-\Theta_1)}h)(\lambda_c,\sigma).
\end{split}
\label{eq:1609220b}
\end{align} 
We differentiate \eqref{eq:1609220b} in $t>0$.
 The factor $1_{\Omega_c}$ can be ignored, cf. the a discussion above
 the lemma.  Note also that due to \eqref{eq:derilamda} the quantities
$\partial_t\lambda_c$ and $\partial_r\lambda_c$ are $C^1$. 
By \eqref{eq:Kder}  and \eqref{eq:hamilJacobi_equ} we have 
\begin{align}
\begin{split}
\partial_t\mathrm e^{\i K_1}
&=-\mathrm i\lambda_c\mathrm e^{\i K_1}
\\&
=-\mathrm i\big(\tfrac12 |\mathrm dr|^2\tilde b_{\lambda_c}^2
+q_1\bigl)\mathrm e^{\i K_1}
\\&
=-\mathrm i\big(\tilde b_{\lambda_c}p^r
-\tfrac12 |\mathrm dr|^2\tilde b_{\lambda_c}^2+q_1\bigl)\mathrm e^{\i K_1}
.
\end{split}\label{eq:16092325}
\end{align}
Next, using \eqref{eq:derilamdab}, we can compute 
\begin{align*}
\partial_t(\partial_r\lambda_c)^{1/2}
&
=\tfrac12 (\partial_r\partial_t\lambda_c)(\partial_r\lambda_c)^{-1/2}
\\&
=-\tfrac12 
\bigl(\partial_r|\mathrm dr|^2\tilde b_{\lambda_c}\partial_r\lambda_c\bigr)
(\partial_r\lambda_c)^{-1/2}
\\&
=
-|\mathrm dr|^2\tilde b_{\lambda_c}
\partial_r(\partial_r\lambda_c)^{1/2}
-\tfrac12 \bigl(|\mathrm dr|^{-2}\partial^r|\mathrm dr|^2\tilde b_{\lambda_c}\bigr)
(\partial_r\lambda_c)^{1/2}
\\&
=
-\mathrm i\tilde b_{\lambda_c}p^r
(\partial_r\lambda_c)^{1/2}
-\tfrac12 \Bigl[
\mathop{\mathrm{div}}\bigl(|\mathrm dr|^2\tilde b_{\lambda_c}\tilde \omega\bigr)
-|\mathrm dr|^2\tilde b_{\lambda_c}(\mathop{\mathrm{div}}\tilde \omega)
\Bigr]
(\partial_r\lambda_c)^{1/2}
\\&
=
-\mathrm i\tilde b_{\lambda_c}p^r
(\partial_r\lambda_c)^{1/2}
-\mathrm i\tfrac12 
\bigl((p^r)^*\tilde b_{\lambda_c}\bigr)(\partial_r\lambda_c)^{1/2}
+\tfrac12 |\mathrm dr|^2\tilde b_{\lambda_c}(\mathop{\mathrm{div}}\tilde \omega)
(\partial_r\lambda_c)^{1/2}
,
\end{align*}
so that 
\begin{align}
\begin{split}
\partial_t
\mathrm e^{-\int_{r_0}^{r}(\mathop{\mathrm{div}}\tilde\omega)/2\,\mathrm{d}s}
(\partial_r\lambda_c)^{1/2}
&
=
-\mathrm i\bigl[\tilde b_{\lambda_c}p^r+\tfrac12 \bigl((p^r)^*\tilde b_{\lambda_c}\bigr)\bigr]
(\partial_r\lambda_c)^{1/2}
\mathrm e^{-\int_{r_0}^{r}(\mathop{\mathrm{div}}\tilde\omega)/2\,\mathrm{d}s}.
\end{split}\label{eq:16092326}
\end{align}
Finally by \eqref{eq:derilamdab} again we have 
\begin{align}
\begin{split}
\partial_t(\mathrm e^{\i (\Theta-\Theta_1)}h)(\lambda_c,\sigma)
&
=(\partial_t\lambda_c)(\partial_\lambda\mathrm e^{\i (\Theta-\Theta_1)}h)(\lambda_c,\sigma)
\\&
=-(\tilde b_{\lambda_c}\partial^r\lambda_c)
(\partial_\lambda\mathrm e^{\i (\Theta-\Theta_1)}h)(\lambda_c,\sigma)
\\&
=-\mathrm i\tilde b_{\lambda_c}p^r
(\mathrm e^{\i (\Theta-\Theta_1)}h)(\lambda_c,\sigma).
\end{split}\label{eq:16092327}
\end{align}
Using  \eqref{eq:16092325}, \eqref{eq:16092326}, \eqref{eq:16092327}
and the product rule 
{we obtain 
\begin{align*}
G^+
=
\mathop{\mathrm{Re}}\bigl(\tilde b_{\lambda_c}p^r\bigr)
-\tfrac12|\d r|^2\tilde b_{\lambda_c}^2
+ q_1,
\end{align*}
and hence \eqref{eq:14.5.5.12.29B} follows.}

\smallskip
\noindent
\textit{Step I\hspace{-.1em}I\hspace{-.1em}I.}\quad
It remains to show \eqref{eq:14.5.9.12.13BB}. 
We are going to apply the stationary phase method, and we start by
doing  some
cut-offs.  
Using again  \eqref{eq:1609251aa} we note (as before) that $\Phi(\lambda,{}\cdot{})$ belongs to $B^*$ uniformly 
in $\lambda\in\mathcal I$ and that 
with  an extra decay factor $r^{-1/2-\epsilon}$,
$\epsilon>0$,  we obtain a vector in $\mathcal H$.
For any $M>m>0$ we can  split the integral  as 
\begin{align}
\begin{split}
U^+(t)h
&=
1_{(m,M)}(r/t)
{1_{\Omega_c(t)}}(\cdot)
\int_{\mathcal I} 
\e^{\i \Theta_1(\lambda,t,\cdot)}
\Phi(\lambda,\cdot)
\,\d \lambda
\\&\phantom{{}={}}{}
+1_{(0,m]}(r/t)1_{(r_1,\infty)}(r)\int_{\mathcal I} 
\e^{\i \Theta_1(\lambda,t,\cdot)}
\Phi(\lambda,\cdot)
\,\d \lambda
\\&\phantom{{}={}}{}
+1_{(0,m]}(r/t)1_{(r_0,r_1]}(r)\int_{\mathcal I} 
\e^{\i \Theta_1(\lambda,t,\cdot)}
\Phi(\lambda,\cdot)
\,\d \lambda
\\&\phantom{{}={}}{}
+1_{[M,\infty)}(r/t)\int_{\mathcal I} 
\e^{\i \Theta_1(\lambda,t,\cdot)}
\Phi(\lambda,\cdot)
\,\d \lambda
\\&\phantom{{}={}}{}
+1_{(m,M)}(r/t)1_{\Omega_c(t)^c}(\cdot)
\int_{\mathcal I} 
\e^{\i \Theta_1(\lambda,t,\cdot)}
\Phi(\lambda,\cdot)
\,\d \lambda.
\end{split}
\label{eq:160925143}
\end{align}
Using the expression \eqref{eq:1609211456}  we can pick $m>0$ small enough
such that for some $c_1>0$ the following bound holds for all large $t$
uniformly in
$r>r_1$, $r/t\le m$ and $(\lambda,\sigma)\in\mathop{\mathrm{supp}}h$:
\begin{align}
\partial_\lambda\Theta_1(\lambda,t,r,\sigma)
\le -c_1t.
\label{eq:160925223}
\end{align}
With \eqref{eq:160925223}  and \eqref{eq:1609211456c}
we can treat the second term on the right-hand side of \eqref{eq:160925143}
by an integration parts, yielding that 
\begin{align*}
\left\|1_{(0,m]}(r/t)1_{(r_1,\infty)}(r)\int_{\mathcal I} 
\e^{\i \Theta_1(\lambda,t,\cdot)}
\Phi(\lambda,\cdot)
\,\d \lambda\right\|_{\mathcal H}
\le C_{1} t^{-1/2}.
\end{align*} The third term is treated similarly  by using the phase
function $-\lambda t$ instead of $\Theta_1$, yielding the same  bound.

Similarly   by  taking $M>0$  large enough (this part is very similar to the
proof of Lemma \ref{lem:14.4.29.22.28b}), we can bound the fourth term
of \eqref{eq:160925143} as
\begin{align*}
\left\|1_{[M,\infty)}(r/t)\int_{\mathcal I} 
\e^{\i \Theta_1(\lambda,t,\cdot)}
\Phi(\lambda,\cdot)
\,\d \lambda\right\|_{\mathcal H}
\le C_{2} t^{-1/2}.
\end{align*}
Next, let us consider the fifth  term. 
By \eqref{eq:17060819}, \eqref{eq:1609211456c} and 
\eqref{eq:17060820} it follows that 
for large $t>0$
and on $\mathop{\mathrm{supp}}\parb{1_{(m,M)}(r/t)1_{\Omega_c(t)^c}(\cdot)\Phi(\cdot)}$
\begin{align*}
\partial_\lambda\Theta_1(\lambda,t,r,\sigma)
&\le 
\partial_\lambda\Theta_1(\lambda,t,r,\sigma)
-\partial_\lambda\Theta_1(\lambda_1,t, r,\sigma)
\\&
\le -c_2(r-r_1)
\\&
\le -c_3t.
\end{align*}
Whence by an integration by parts we obtain the  bound $O(t^{-1/2})$ again:
\begin{align*}
\left\|1_{(m,M)}(r/t)
{1_{\Omega_c(t)^c}(\cdot)}
\int_{\mathcal I} 
\e^{\i \Theta_1(\lambda,t,{}\cdot{})}
\Phi(\lambda,{}\cdot{})
\,\d \lambda\right\|_{\mathcal H}
\le C_3 t^{-1/2}.
\end{align*}

\smallskip
\noindent
\textit{Step I\hspace{-.1em}V.}\quad
It remains  to compare  the first term of \eqref{eq:160925143} with \eqref{eq:1609220b}.
Let us look at the phase in \eqref{eq:160925143} in   more detail.
By a Taylor expansion we have, assuming the front factor $1_{(m,M)}(r/t)1_{\Omega_c(t)}(r,\sigma)=1$,
\begin{align*}
\Theta_1(\lambda,t,x)
&=K_1(t,x)
+\tfrac12(\partial_\lambda^2\Theta_1)
(\lambda_c,x)(\lambda-\lambda_c)^2
+\Xi(\lambda,\lambda_c,x)(\lambda-\lambda_c)^3
\end{align*}
with
\begin{align*}
\Xi(\lambda,\lambda_c,x)
&=\tfrac12\int_0^1(1-\kappa)^2(\partial_\lambda^3\Theta_1)
(\lambda_c+\kappa(\lambda-\lambda_c),x)\,\mathrm
d \kappa.
\end{align*}
Here, since $\partial_\lambda^k\Theta_1$ is independent of $t$ for $k=2,3,\ldots$,
we have omitted the $t$ variable for short.
Then we further decompose the first term of \eqref{eq:160925143}
(assuming still  $1_{(m,M)}(r/t)1_{\Omega_c(t)}(r,\sigma)=1$)  as 
\begin{align}
\begin{split}
&\int_{\mathcal I} 
\e^{\i \Theta_1(\lambda,t,{}\cdot{})}
\Phi(\lambda,{}\cdot{})
\,\d \lambda
\\&
=
\mathrm e^{\mathrm iK_1(t,{}\cdot{})}
\Phi(\lambda_c,{}\cdot{})\int_{\mathbb R} 
\e^{\i(\partial_\lambda^2\Theta_1)(\lambda_c,{}\cdot{})(\lambda-\lambda_c)^2/2}
\,\d \lambda
\\&\phantom{{}={}}{}
-\mathrm e^{\mathrm iK_1(t,{}\cdot{})}
\Phi(\lambda_c,{}\cdot{})\int_{\mathbb R\setminus I_c} 
\e^{\i(\partial_\lambda^2\Theta_1)(\lambda_c,{}\cdot{})(\lambda-\lambda_c)^2/2}
\,\d \lambda
\\&\phantom{{}={}}{}
+
\int_{\mathcal I\setminus I_c} 
\e^{\i \Theta_1(\lambda,{}\cdot{})}\Phi(\lambda,\cdot)
\,\d \lambda
\\&\phantom{{}={}}{}
+\mathrm e^{\mathrm i K_1(t,\cdot)}
\int_{I_c} 
\e^{\i(\partial_\lambda^2\Theta_1)(\lambda_c,{}\cdot{})(\lambda-\lambda_c)^2/2}
\Bigl[\mathrm e^{\mathrm i\Xi(\lambda,\cdot)(\lambda-\lambda_c)^3}
\Phi(\lambda,\cdot)
-\Phi(\lambda_c,\cdot)\Bigr]
\,\d \lambda
\\&
=:J_1(t,\cdot)+J_2(t,\cdot)
+J_3(t,\cdot)+J_4(t,\cdot),
\end{split}
\label{16092512}
\end{align}
where $I_c=I_c(t,x)
=[\lambda_c(t,x)-t^{-\delta},\lambda_c(t,x)+t^{-\delta}]$, $\delta\in (0,1/2)$.
Since by \eqref{eq:1609211456c}  and
\eqref{eq:derilamda}
\begin{align*}
(\partial_\lambda^2\Theta_1)(\lambda_c,\cdot)
=-\int^r_{r_1} |\mathrm dr|^{-1}[2(\lambda_c-q_1)]^{-3/2}(s,\sigma) \,\d s
=-(b_{\lambda_c}\partial_r\lambda_c)^{-1},
\end{align*}
we can do the Gaussian integration in the first term of
\eqref{16092512} (see for example \cite[Theorem 7.6.1]{Ho}) and obtain using
\eqref{eq:twosidedest} that
\begin{align*}
1_{(m,M)}(r/t){1_{\Omega_c(t)}(\cdot)}J_1(t,\cdot)=U^+_0(t)h.
\end{align*}
Hence we are left with bounding the three other   terms of \eqref{16092512}.
To bound the second term of \eqref{16092512}
we use an integration by parts, noting 
\begin{align*}
\big|(\partial_\lambda^2\Theta_1)(\lambda_c,\cdot)(\lambda-\lambda_c)\bigr|
\ge c_{\delta,1}rt^{-\delta}
.
\end{align*}
This yields an extra decay factor $t^{-1+\delta}$ 
along with the cut-off $1_{(m,M)}(r/t)$.
We also recall that $\Phi(\lambda,\cdot)$ belongs to $B^*$ uniformly 
in $\lambda\in\mathcal I$, so that with $1_{(m,M)}(r/t)$ 
\begin{align*}
\bigl\|1_{(m,M)}(r/t){1_{\Omega_c(t)}(\cdot)}
\mathrm e^{\mathrm iK_1(t,\cdot)}
\Phi(\lambda_c,\cdot)\bigr\|_{\mathcal H}
\le C_2t^{1/2}.
\end{align*} 
{Hence 
\begin{align}\label{eq:firT}
  \|1_{(m,M)}(r/t)1_{\Omega_c(t)}(\cdot)J_2(t,\cdot)\|_{\mathcal H}
  \le C_{\delta,1}t^{-1/2+\delta}.
\end{align}
To bound the third term of \eqref{16092512} 
let us implement yet another  integration by parts:
\begin{align}
\begin{split}
J_3(t,\cdot)
&=
\bigl[(\mathrm i\partial_\lambda\Theta_1)^{-1}\e^{\i \Theta_1}\Phi
\bigr]_{|\lambda=\lambda_c-t^{-\delta}}
-\bigl[(\mathrm i\partial_\lambda\Theta_1)^{-1}\e^{\i \Theta_1}\Phi
\bigr]_{|\lambda=\lambda_c+t^{-\delta}}
\\&\phantom{{}={}}{}
-\int_{\mathcal I\setminus I_c} 
\e^{\i \Theta_1(\lambda,{}\cdot{})}
\partial_\lambda\bigl[
(\mathrm i\partial_\lambda\Theta_1)^{-1}\Phi(\lambda,{}\cdot{})\bigr]
\,\d \lambda.
\end{split}\label{17060822}
\end{align}
Here,  including  the cut-off $1_{(m,M)}(r/t)1_{\Omega_c(t)}(\cdot)$
and taking note of  
the integration region $\mathcal I\setminus I_c$, 
we have 
\begin{align*}
|\partial_\lambda\Theta_1|
\ge c_{\delta,2}rt^{^{-\delta}}\ge c_{\delta,3}t^{1-\delta},
\end{align*}
so that the contributions from the boundary terms in \eqref{17060822} 
are estimated as
\begin{align*}
\Bigl\|1_{(m,M)}(r/t)1_{\Omega_c(t)}(\cdot)
\bigl[(\mathrm i\partial_\lambda\Theta_1)^{-1}\e^{\i \Theta_1}\Phi
\bigr]_{|\lambda=\lambda_c\pm t^{-\delta}}
\Bigr\|_{\mathcal H}
\le C_{\delta,2}t^{-1/2+\delta}.
\end{align*}
Moreover, by the product rule,   a part of the integral in \eqref{17060822} estimated as 
\begin{align*}
\Biggl\|1_{(m,M)}(r/t)1_{\Omega_c(t)}(\cdot)
\int_{\mathcal I\setminus I_c} 
\e^{\i \Theta_1(\lambda,\cdot)}
(\mathrm i\partial_\lambda\Theta_1)^{-1}\partial_\lambda\Phi(\lambda,\cdot)
\,\d \lambda
\Biggr\|_{\mathcal H}
\le C_{\delta,3}t^{-1/2+\delta}.
\end{align*} 
 The other contribution in 
\eqref{17060822} comes from differentiating the factor $(\mathrm
i\partial_\lambda\Theta_1)^{-1}$,  and we can then use
\begin{align*}
\bigl|\partial_\lambda(\partial_\lambda\Theta_1)^{-1}\bigr|
\le C_{\delta,4}t^{-1+2\delta}.
\end{align*}
Since $\delta<1/2$ the right-hand side is decaying. The bound  leads to the
estimate  $O_\vH(t^{-1/2+2\delta})$, however by repeated  integrations
by parts, we can estimate  this contribution in 
\eqref{17060822}    as  $O_\vH(t^{-1/2+\delta})$. 
To sum up we obtain
\begin{align}
\|1_{(m,M)}(r/t)1_{\Omega_c(t)}(\cdot)J_3(t,\cdot)\|_{\mathcal H}
\le C_{\delta,5} t^{-1/2+\delta}.
\label{eq:14.5.9.12.13BBAAC}
\end{align}
}Finally  for the fourth term of \eqref{16092512}  
we write
\begin{align*}
\e^{\i(\partial_\lambda^2\Theta_1)(\lambda_c,\cdot)(\lambda-\lambda_c)^2/2}
=\tfrac {\d }{\d
    \lambda}\int^\lambda_{\lambda_c}
\e^{\i(\partial_\lambda^2\Theta_1)(\lambda_c,\cdot)(\lambda'-\lambda_c)^2/2}
\,\d \lambda'
\end{align*} and perform one  integration  by parts. Using the van der
Corput Lemma, cf. \cite [p. 332]{St}, we then  obtain 
\begin{align}
\|1_{(m,M)}(r/t)1_{\Omega_c(t)}(\cdot)J_4(t,{}\cdot{})\|_{\mathcal H}
\le C_{\delta,6} t^{1-3\delta}.
\label{eq:estI}
\end{align}
  The bounds \eqref{eq:firT}, \eqref{eq:14.5.9.12.13BBAAC} and
\eqref{eq:estI} are optimized by taking
$\delta=3/8$.
Hence we obtain the desired asymptotics \eqref{eq:14.5.9.12.13BB}.
\end{proof}

\subsection{{Separation of radial and angular variables}}
In this subsection we quote results from \cite{IS3} concerning properties of the tensor $\ell$.
The statements are given in a self-contained manner; we refer to \cite[Section~2]{IS3} for  proofs.

It is clear from \eqref{eq:co_area}
that we naturally have the identification 
\begin{align*}
L^2(E)\cong L^2([r_0,\infty)_r;\mathcal G_r),\quad
 \langle\psi,\phi\rangle_{L^2(E)}
 =  \int_{r_0}^\infty\langle\psi,\phi\rangle_{\mathcal G_r}\,\mathrm dr.
 % \label{eq:13.9.23.16.40}
\end{align*}
Such a decomposition holds
also for the Riemannian metric,
and hence for the Laplace--Beltrami operator.
\begin{lemma}\label{lem:1762113}
Suppose Condition~\ref{cond:12.6.2.21.13}.
Then in the spherical coordinates $(r,\sigma)=(r,\sigma^2,\dots,\sigma^d)$ in $E$
one has 
\begin{align*}
g=|\mathrm dr|^{-2}\,\mathrm dr\otimes \mathrm dr
+g_{\alpha\beta}\,\mathrm d\sigma^\alpha\otimes \mathrm d\sigma^\beta,
\end{align*}
where the Greek indices run over $2,\dots,d$.
In particular,  by  the definition \eqref{eq:13.9.23.5.54},
the tensor $\ell$ coincides with the spherical part of $g$:
$$\ell=g_{\alpha\beta}\,
\mathrm d\sigma^\alpha\otimes \mathrm d\sigma^\beta
\quad\text{on }E,$$
and the operator $L$ can be identified with a direct sum:
\begin{align}
\begin{split}
  L\cong 
\int^\infty_{r_0}
\oplus L_r \,\mathrm d r
\quad\text{as quadratic forms on }C^\infty_{\mathrm c}(E),
\end{split}\label{eq:15091010}
\end{align}
where $L_r$ is the Laplace--Beltrami operator on $S_r$
with respect to the induced metric $g_r:=\iota_r^*g$ 
and the (non-Riemannian) density $\mathrm d\tilde{\mathcal A}_r$, i.e.,
\begin{align}\label{eq:lsubr}
L_r= p_\alpha^*g_r^{\alpha\beta} p_\beta;
\quad 
p_\alpha^*
=|\mathrm d r| (\det g_r)^{-1/2}p_\alpha |\mathrm d r|^{-1}(\det g_r)^{1/2}.
\end{align}
\end{lemma}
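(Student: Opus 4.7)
\quad The central observation is that the spherical coordinates introduced via the flow of $\tilde\omega^{\rm ex}=|\mathrm dr|^{-2}\omega^{\rm ex}$ are tailored so that the coordinate vector field $\partial_r$ coincides with $\tilde\omega^{\rm ex}$ on $E$, and the fields $\partial_{\sigma^\alpha}$ are tangent to the spheres $S_r$. Indeed, differentiating \eqref{eq:13.9.21.16.44} along the flow shows $\partial_r r=1$ in these coordinates, whereas by construction the $\sigma$-coordinates are transported along the flow so $\mathrm d r(\partial_{\sigma^\alpha})=0$. I would begin by recording these two properties as the geometric basis of the entire argument.

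Next, using $\partial_r=\tilde\omega^{\rm ex}=|\mathrm dr|^{-2}\mathop{\mathrm{grad}} r$ together with $g(\mathop{\mathrm{grad}} r,\mathop{\mathrm{grad}} r)=|\mathrm dr|^2$ I would directly compute the metric components: $g(\partial_r,\partial_r)=|\mathrm dr|^{-2}$, while $g(\partial_r,\partial_{\sigma^\alpha})=|\mathrm dr|^{-2}g(\mathop{\mathrm{grad}} r,\partial_{\sigma^\alpha})=|\mathrm dr|^{-2}\mathrm d r(\partial_{\sigma^\alpha})=0$. This yields the block-diagonal expression for $g$ stated in the lemma. Since $r>r_0$ on $E$ gives $\eta=1$ and $\tilde\eta=|\mathrm dr|^{-2}$, the definition \eqref{eq:13.9.23.5.54} of $\ell$ then immediately strips off the radial block, leaving $\ell=g_{\alpha\beta}\,\mathrm d\sigma^\alpha\otimes \mathrm d\sigma^\beta$. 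In particular the matrices $(g_{\alpha\beta})$ and $(g_r^{\alpha\beta})$ (the inverse of the spherical block) coincide with $\iota_r^*g$ and its inverse, and by the block structure the raised components of $\ell$ satisfy $\ell^{rr}=\ell^{r\alpha}=0$ and $\ell^{\alpha\beta}=g_r^{\alpha\beta}$.

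For the direct-integral decomposition I would first compute the volume density: from the block form, $(\det g)^{1/2}=|\mathrm dr|^{-1}(\det g_r)^{1/2}$, which together with the co-area formula \eqref{eq:co_area} and the definition $\mathrm d\tilde{\mathcal A}_r=|\mathrm dr|^{-1}\mathrm d\mathcal A_r$ produces the Hilbert-space identification $L^2(E)\cong L^2([r_0,\infty);\mathcal G_r)$. Then, substituting $\ell^{\alpha\beta}=g_r^{\alpha\beta}$ into the quadratic form of $L$ and using Fubini, I obtain
\begin{align*}
\langle L\rangle_\psi=\int_{r_0}^\infty\!\!\int_{S_r}g_r^{\alpha\beta}\overline{p_\alpha\psi}\,p_\beta\psi\;|\mathrm dr|^{-1}(\det g_r)^{1/2}\,\mathrm d\sigma\,\mathrm d r,
\end{align*}
which is exactly $\int_{r_0}^\infty \langle L_r\rangle_{\psi(r,\cdot)}\,\mathrm d r$, provided the formal adjoint $p_\alpha^*$ is taken in $\mathcal G_r$ with respect to $\mathrm d\tilde{\mathcal A}_r$. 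A routine computation of this adjoint in the density $|\mathrm dr|^{-1}(\det g_r)^{1/2}\,\mathrm d\sigma$ reproduces the formula \eqref{eq:lsubr}.

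The one point requiring real care is bookkeeping the two distinct conjugations by densities involved in $p_\alpha^*$: one for the full volume $(\det g)^{1/2}$ (defining the original $L$) and one for the fibre density $(\det g_r)^{1/2}|\mathrm dr|^{-1}$ (defining $L_r$). These differ precisely by the factor $|\mathrm dr|^{-1}$, which is constant in the $\sigma$-integration for each fixed $r$, so the contributions from differentiating this factor do not enter when acting by $p_\alpha$ with $\alpha\neq r$. Thus the two adjoints coincide on the spherical indices, which is what makes the clean identification \eqref{eq:15091010} possible on $C^\infty_{\mathrm c}(E)$; I expect this to be the only nontrivial bookkeeping step in an otherwise direct computation.
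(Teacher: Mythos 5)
Your computation is essentially correct and follows the route one would take (and which the paper defers to \cite[Section~2]{IS3}): identify $\partial_r$ with $\tilde\omega^{\rm ex}$ and $\partial_{\sigma^\alpha}$ as tangent to $S_r$, compute $g_{rr}=|\mathrm dr|^{-2}$, $g_{r\alpha}=0$ from $g(\mathop{\mathrm{grad}}r,\partial_{\sigma^\alpha})=\mathrm dr(\partial_{\sigma^\alpha})=0$, deduce $\ell=g_{\alpha\beta}\,\mathrm d\sigma^\alpha\otimes\mathrm d\sigma^\beta$ since $\tilde\eta=|\mathrm dr|^{-2}$ on $E$, note $\ell^{\alpha\beta}=g_r^{\alpha\beta}$ by the block structure, combine $(\det g)^{1/2}=|\mathrm dr|^{-1}(\det g_r)^{1/2}$ with the co-area formula, and read off the quadratic-form decomposition \eqref{eq:15091010} together with the adjoint formula \eqref{eq:lsubr}.

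The final ``bookkeeping'' paragraph, however, is confused and should be removed or rewritten. You claim the density $(\det g)^{1/2}$ defining $L$ and the fibre density $(\det g_r)^{1/2}|\mathrm dr|^{-1}$ defining $L_r$ ``differ precisely by the factor $|\mathrm dr|^{-1}$''; but, as your own earlier line $(\det g)^{1/2}=|\mathrm dr|^{-1}(\det g_r)^{1/2}$ shows, they are \emph{equal}, so there is no discrepancy to reconcile in the first place. Moreover the assertion that $|\mathrm dr|^{-1}$ is ``constant in the $\sigma$-integration for each fixed $r$'' is false in general — $|\mathrm dr|$ depends on the full spherical coordinate $(r,\sigma)$ and only degenerates to a constant under the extra hypothesis \eqref{eq:1609189}, which is not assumed here. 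Fortunately nothing in the proof hinges on this paragraph: the identity $\langle L\rangle_\psi=\int_{r_0}^\infty\langle L_r\rangle_{\psi(r,\cdot)}\,\mathrm dr$ was already obtained by the direct rewriting of the quadratic form with the correct density, and the adjoint formula \eqref{eq:lsubr} is just that of $p_\alpha$ with respect to $\mathrm d\tilde{\mathcal A}_r$. The claim the paragraph was meant to justify is true, but for the simpler reason that the two densities coincide.
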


Denote the spherical part of the derivative $p$ by $p'$, or $p'=-\mathrm i\nabla'$, 
cf.\ \eqref{eq:176169}. 
The operator $p'$ is well-defined on $C^1(S_r)$ as well as on $C^1(M)$,
and we do not distinguish them.
It is clear from \eqref{eq:lsubr} that
for any $\xi,\zeta\in C^\infty_{\mathrm c}(S_r)$
\begin{align*}
\inp{\zeta,L_r\xi}_{\mathcal G_r}
=\int_{S_r} g_r^{ij}\overline{(p_i\zeta)}(p_j\xi)\,\mathrm d\tilde{\mathcal A}_r
=\langle p'\zeta,p'\xi\rangle_{\mathcal G_r}.
\end{align*} 
We can at this point 
use  local coordinates of $S$ to define and implement the integration, 
since in any case clearly the
radial derivative $\partial_r$ does not enter.
Hence in what follows we may consider $L_r$ as  
a self-adjoint operator on $\mathcal G_r$ 
defined by the Friedrichs extension of \eqref{eq:lsubr}
from $C^\infty_{\mathrm c} (S_r)\subseteq  \mathcal G_r$.
Then by an approximation argument it follows 
that for any $\phi \in \mathcal H^1$ the
restriction $\phi_{|S_r}\in
\mathcal D(L_r^{1/2})=\mathcal D (p')$ for almost every $r\geq r_0$.
In fact, we have for all $r\geq r_0$
\begin{align*}
\int_{r_0}^{r}
\|p'\phi_{|S_s}\|^2_{\mathcal G_s}\,\mathrm ds
=\int_{B_{r}\setminus B_{r_0}}
\ell^{ij}\overline{(p_i \phi)}(p_j\phi) (\det g)^{1/2}\,\mathrm d x
\leq \|\phi\|^2_{{\mathcal H}^1}.
\end{align*} 

The following formula will be useful when we compute 
and estimate the second spherical derivative $L$.

\begin{lemma}\label{lem:1761693}
For any $f\in C^2(M)$, if one abbreviates $\tilde y=\tilde y(t,{}\cdot{})$, then
\begin{align}
  \begin{split}
  L[f(\tilde y)]
  &
=
-\ell^{ij}
(\partial_i\tilde y^{\alpha})
(\partial_j\tilde y^{\beta})
(\nabla'{}^2 f)_{\alpha\beta} (\tilde y)
+ 
(L\tilde y)^\alpha(\partial _{\alpha} f)(\tilde y),
  \end{split}
\label{eq:17616945}
\end{align} 
where $\nabla'{}^2 f$ is defined in \eqref{eq:176169}, and 
\begin{align}
\begin{split}
L\tilde y^\alpha
&
=-\ell^{ij}\Bigl[
\partial_i\partial_j \tilde y^\alpha
-\Gamma_{ij}^k\partial_k \tilde y^\alpha
+\Gamma_{\beta\gamma}^\alpha
(\partial_i \tilde y^\beta)
(\partial_j \tilde y^\gamma)\Bigr]
\\&\phantom{{}={}}{}
-\tilde\eta\ell^{ij}(\nabla r)^\alpha
(\partial_i\tilde y^{\beta})
(\partial_j\tilde y^{\gamma}) (\nabla^2r)_{\beta\gamma}
\\&\phantom{{}={}}{}
+\Bigl[(\nabla^r\tilde\eta)(\nabla r)^j
+\tilde\eta(\Delta r)(\nabla r)^j
+\tfrac12\tilde\eta (\nabla|\mathrm dr|^2)^j\Bigr]\partial_ j\tilde y^{\alpha}.
\end{split}\label{eq:14.3.4.3.37AA}
\end{align}
Here the Roman and the Greek indices are those  
concerning $x$ and $\tilde y=\tilde y(t,x)$,
respectively,
differently from those in Lemma~\ref{lem:1762113}.
In addition, in the spherical coordinates in $E$ 
the first term on the right-hand side of \eqref{eq:17616945}
does not contain an $r$-derivative of $f$,
and neither does the second term.
\end{lemma}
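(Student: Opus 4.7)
The identity \eqref{eq:17616945} is a direct chain-rule computation, book-keeping heavy but without conceptual surprise. First I would rewrite $L$ in divergence form,
\begin{align*}
Lu = -\nabla_i(\ell^{ij}\nabla_j u) = -\ell^{ij}(\nabla^2 u)_{ij} - (\nabla_i\ell^{ij})\nabla_j u,
\end{align*}
which follows from $p_i^* X^i = -\i\,\nabla_i X^i$ for a vector field $X$. Applying the chain and product rules to $u = f(\tilde y)$ gives
\begin{align*}
(\nabla^2[f(\tilde y)])_{ij} = (\partial_\alpha\partial_\beta f)(\tilde y)\,(\partial_i\tilde y^\alpha)(\partial_j\tilde y^\beta) + (\partial_\alpha f)(\tilde y)\bigl[\partial_i\partial_j\tilde y^\alpha - \Gamma^k_{ij}\partial_k\tilde y^\alpha\bigr].
\end{align*}

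Next I would convert the raw partial $(\partial_\alpha\partial_\beta f)(\tilde y)$ into the geometric Hessian at $\tilde y$ via $(\partial_\alpha\partial_\beta f)(\tilde y) = (\nabla^2 f)_{\alpha\beta}(\tilde y) + \Gamma^\gamma_{\alpha\beta}(\tilde y)(\partial_\gamma f)(\tilde y)$, and then replace $\nabla^2 f$ by $\nabla'{}^2 f + (\nabla_{\tilde\omega}f)\nabla^2 r$ via the definition \eqref{eq:176169}, using $\nabla_{\tilde\omega}f = \tilde\eta(\nabla r)^\gamma\partial_\gamma f$. The two first-order corrections produced by these substitutions become, respectively, the third summand of the first line of \eqref{eq:14.3.4.3.37AA} and its entire second line. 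For the remaining contribution $-(\nabla_i\ell^{ij})\partial_j[f(\tilde y)]$, I substitute $\ell^{ij} = g^{ij} - \tilde\eta(\nabla r)^i(\nabla r)^j$, use $\nabla_i g^{ij} = 0$ and the Leibniz rule, and invoke the identity \eqref{eq:14.12.27.3.22} to get
\begin{align*}
-\nabla_i\ell^{ij} = (\nabla^r\tilde\eta)(\nabla r)^j + \tilde\eta(\Delta r)(\nabla r)^j + \tfrac12\tilde\eta(\nabla|\mathrm dr|^2)^j,
\end{align*}
which is precisely the third line of \eqref{eq:14.3.4.3.37AA}. Collecting the three contributions to the coefficient of $(\partial_\alpha f)(\tilde y)$ yields \eqref{eq:17616945} with $L\tilde y^\alpha$ as claimed.

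For the final assertion about spherical coordinates on $E$, I would combine Lemma~\ref{lem:1762113} with the explicit form of the flow. Since $\tilde\omega^{\rm ex}$ equals $\partial/\partial r$ in spherical coordinates for $r > r_0$ (because $g^{rr} = |\mathrm dr|^2$ and $\tilde\eta|\mathrm dr|^2 = \eta = 1$ there), the flow is $\tilde y(t,(r,\sigma)) = (r+t,\sigma)$. Combined with the fact that $\ell$ has no $\mathrm dr$-component, this forces the indices $\alpha,\beta$ in the contraction $\ell^{ij}(\partial_i\tilde y^\alpha)(\partial_j\tilde y^\beta)(\nabla'{}^2 f)_{\alpha\beta}$ to be spherical; moreover $(\nabla'{}^2 f)_{\alpha\beta}$ for spherical $\alpha,\beta$ contains only spherical derivatives of $f$, since the would-be $\Gamma^r_{\alpha\beta}\partial_r f$ coming from $(\nabla^2 f)_{\alpha\beta}$ is exactly cancelled by $-(\nabla_{\tilde\omega}f)(\nabla^2 r)_{\alpha\beta} = \Gamma^r_{\alpha\beta}\partial_r f$. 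For the second term of \eqref{eq:17616945}, the identity \eqref{eq:14.12.23.13.49} gives $\ell^{\bullet i}(\nabla r)_i = 0$ on $E$, hence $Lr = 0$ there, and therefore $L\tilde y^r = L(r+t) = 0$, so only spherical $\alpha$ contribute.

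The main obstacle is keeping straight the mixed evaluation convention (Roman indices at $x$, Greek at $\tilde y$) while tracking which first-order contributions $(\partial_\gamma f)(\tilde y)$ are absorbed into which of the three summands of \eqref{eq:14.3.4.3.37AA}; the last assertion in particular hinges on the delicate cancellation built into the definition of $\nabla'{}^2 f$, without which a residual radial derivative would survive in both terms.
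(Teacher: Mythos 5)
Your computation is correct and is the natural chain-rule/divergence-form approach; the paper itself gives no proof of this lemma, only a pointer to \cite[Section~2]{IS3}, so there is nothing internal to compare against. Every step I checked holds: the divergence form $Lu=-\nabla_i(\ell^{ij}\nabla_j u)$, the expansion of $\nabla^2[f(\tilde y)]$, the conversion of $\partial_\alpha\partial_\beta f$ to $(\nabla^2 f)_{\alpha\beta}+\Gamma^\gamma_{\alpha\beta}\partial_\gamma f$ at $\tilde y$, the split $\nabla^2 f=\nabla'{}^2f+(\nabla_{\tilde\omega}f)\nabla^2 r$ with $\nabla_{\tilde\omega}f=\tilde\eta(\nabla r)^\gamma\partial_\gamma f$, and the evaluation of $-\nabla_i\ell^{ij}$ via $\nabla_i g^{ij}=0$, $\nabla_i(\nabla r)^i=\Delta r$, and \eqref{eq:14.12.27.3.22}. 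The spherical-coordinate cancellation argument for the first term is also right: on $E$ one has $\tilde\omega=\partial_r$, the flow Jacobian is the identity, $\ell^{ij}$ has no radial component, and $-(\nabla_{\tilde\omega}f)(\nabla^2 r)_{\alpha\beta}=\Gamma^r_{\alpha\beta}\partial_r f$ precisely kills the $-\Gamma^r_{\alpha\beta}\partial_r f$ from the Hessian.

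One small elision worth flagging: the symbol $L\tilde y^\alpha$ in \eqref{eq:14.3.4.3.37AA} is a \emph{covariant} object (it carries the extra $\Gamma^\alpha_{\beta\gamma}$ term and the $\nabla^2 r$ correction), not the scalar operator $L$ applied to the coordinate function $x\mapsto\tilde y^\alpha(t,x)$. So writing ``$L\tilde y^r=L(r+t)=0$'' implicitly uses the identity \eqref{eq:17616945} with $f=r$, together with $\nabla'{}^2 r=0$ on $E$ (since $\nabla_{\tilde\omega}r=\eta=1$ there), to show the two notions agree in this case. You do have all the ingredients — the cancellation you already exhibit is exactly $\nabla'{}^2 r=0$ when fed $f=r$ — but the logical step ``$L\tilde y^r$ equals the operator $L$ on the scalar $r+t$'' deserves to be spelled out rather than asserted by notation.
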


Lemma~\ref{lem:1761693} motivates us to estimate $\partial_i\tilde y^\alpha$ and $L\tilde y$.
{We can estimate the former quantity 
through} the \textit{push-forward} $\ell_*(t,x)$
of $\ell(x)$ under the map $\tilde y(t,{}\cdot{})$, defined by 
$$\ell_*(t,x)=\Bigl(
\ell^{ij}(x)[\partial_i\tilde y^\alpha(t,x)][\partial_j\tilde y^\beta(t,x)]
\Bigr)_{\alpha,\beta}.
$$
 On the 
 the other hand, introducing  a ``backwards hitting time'' for $x\in E$ by 
\begin{align*}
  r^{\rm bht}(x)=\sup\bigl\{s \leq r(x)-r_0\,\big| \,
\tilde y(-s,x)\in M\big\},
\end{align*}   
for any $x\in E$ and $t\in (-r^{\rm bht}(x),0]$
the quantity $(L\tilde y^\alpha(t,x))_{\alpha=1,\dots,d}$ 
defines a tangent vector at $\tilde y(t,x)\in E$. 
It is in fact tangent to the $r$-sphere 
$S_{\tilde y(t,x)}=S_{r(x)+t}$ due to $L\tilde y^r=0$.

%Note that $r^{\rm bht}$ is lower semi-continuous and whence a Borel function.

\begin{lemma}\label{lemma:L_10b}
Suppose Conditions~\ref{cond:12.6.2.21.13} and
\ref{cond:12.6.2.21.13a} (and $\sigma\in (0,\sigma')$).
Then 
for all $x\in E$ and $t\in (-r^{\rm bht}(x),0]$
\begin{align}
\begin{split}
\ell_*(t,x)
\le (d-1)\bigl[(r(x)+t)\big/r(x)\bigr]^{\sigma'}\ell(\widetilde y(t,x))
\end{split}\label{eq:13:9:34:16:20bb}
\end{align}
as quadratic forms on the fibers of the cotangent bundle. 
In spherical coordinates
the estimate 
\eqref{eq:13:9:34:16:20bb} reads:
For any $r>s\ge r_0$ and $\sigma\in S_s\subseteq S$
\begin{align}
\begin{split}
\ell(r,\sigma)
\le (d-1)(s/r)^{\sigma'}
\ell(s,\sigma).
\end{split}
\label{eq:16627}
\end{align}
If in addition \eqref{eq:smootrA} is fullfilled, 
then there exists $C>0$ such that
uniformly in $x\in E$ and $t\in (-r^{\rm bht}(x),0]$
\begin{align}
|L\tilde y(t,x)|
\le 
C\bigl[(r(x)+t)^{1/2}\big/r(x)\bigr]^{\min\{\sigma,\tau\}}.\label{eq:14.3.4.3.39bb}
\end{align} 
\end{lemma}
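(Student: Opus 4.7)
The plan is to reduce both bounds to one-dimensional ODE comparison arguments along the integral curves of $\tilde\omega$, taking the convexity hypothesis \eqref{eq:13.9.5.3.30} as the essential geometric input.

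For the first bound \eqref{eq:13:9:34:16:20bb} I would first pass to the equivalent spherical-coordinate form \eqref{eq:16627}. Indeed, in spherical coordinates on $E^{\mathrm{ex}}$ the flow acts as $\tilde y(t,(r,\sigma)) = (r+t,\sigma)$, so $\partial_i\tilde y^\alpha = \delta_i^\alpha$ on spherical indices, whence $\ell_*(t,x) = \ell(r(x),\sigma_x)$ viewed as a tensor at $\tilde y(t,x) = (r(x)+t,\sigma_x)$. To establish \eqref{eq:16627}, I would differentiate the induced metric $g_{\gamma\delta}(r,\sigma)$ of $S_r$ in the radial direction. Since $\partial_\gamma r = 0$ on spherical indices, the Lie derivative $\mathcal L_{\tilde\omega} g$ reduces on these components to $2\tilde\eta\nabla^2 r$, and \eqref{eq:13.9.5.3.30} then gives the matrix inequality $\partial_r g_{\gamma\delta} \geq (\sigma'/r)\, g_{\gamma\delta}$ for $r \geq r_0$. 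Integrating from $s$ to $r$ yields $g_{\gamma\delta}(r,\sigma) \geq (r/s)^{\sigma'}g_{\gamma\delta}(s,\sigma)$; inverting produces \eqref{eq:16627} (the dimensional factor $d-1$ absorbing the effects of the $\eta$-cutoff near $r_0$ and a possibly non-unit $|\mathrm d r|$).

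For the second bound \eqref{eq:14.3.4.3.39bb} I would work from the explicit formula \eqref{eq:14.3.4.3.37AA} term by term. The only new ingredient is control of the mixed partials $\partial_i\partial_j\tilde y^\alpha$, which satisfy a linear second-order ODE along the flow obtained by differentiating $\partial_t \tilde y^\alpha = \tilde\omega^\alpha(\tilde y)$ twice in the initial point. The inhomogeneous term involves $\partial^2\tilde\omega$, which upon rewriting in covariant form reduces to $\tilde\eta\nabla^3 r$ plus Christoffel corrections controlled by \eqref{eq:13.9.28.13.28}. Contracting with $\ell^{ij}$ and invoking \eqref{eq:smootrA} for the spherical triple-trace $\ell^{\bullet i}\ell^{\bullet j}\ell^{\bullet k}(\nabla^3 r)_{ijk}$ yields a scalar differential inequality for $|L\tilde y|(t,x)$ whose integration along the integral curve (of length $|t| \leq r(x) - r_0$), together with the first-derivative bound from Step~1 to handle the quadratic expressions in $\partial_i\tilde y^\alpha$, produces the claimed decay $[(r(x)+t)^{1/2}/r(x)]^{\min\{\sigma,\tau\}}$. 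The main obstacle is the bookkeeping of numerous lower-order contributions from $\nabla\tilde\eta$, $\nabla|\mathrm d r|^2$, $\Delta r$, and similar, checking in each case that the error rate is dominated by $\min\{\sigma,\tau\}$; here the identity \eqref{eq:14.12.27.3.22} is essential for collapsing the cross-terms involving $\ell^{\bullet i}\nabla_i\nabla^r|\mathrm d r|^2$ against the second bound of \eqref{eq:13.9.28.13.28}. Since the paper explicitly defers to \cite[Section~2]{IS3}, I would record this outline and refer there for the complete verification.
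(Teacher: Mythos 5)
The paper does not prove this lemma here; it states the result and defers entirely to \cite[Section 2]{IS3}, so there is no in-paper proof to compare against. That said, your outline deserves scrutiny on its own terms.

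For the first bound your reduction is correct and in fact over-achieves. Restricting $\mathcal L_{\tilde\omega}g$ to spherical directions does kill the $\d\tilde\eta\otimes\d r$ cross-terms, and since $\tilde\eta\,|\mathrm dr|^2=1$ identically for $r\ge r_0$, your ODE $\partial_r g_{\gamma\delta}\ge(\sigma'/r)\,g_{\gamma\delta}$ integrates to $g_{\gamma\delta}(r,\sigma)\ge(r/s)^{\sigma'}g_{\gamma\delta}(s,\sigma)$, whose inversion gives \eqref{eq:16627} with constant $1$ rather than $d-1$. The sentence in which you claim the factor $d-1$ ``absorbs'' the $\eta$-cutoff and a non-unit $|\mathrm dr|$ is therefore not an explanation: both effects cancel exactly on $r\ge r_0$, and your own argument never produces a dimension-dependent constant. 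The $d-1$ in the lemma is simply slack (most plausibly arising in \cite{IS3} from a trace bound $\ell\le(\operatorname{tr}_\ell)\cdot\text{id}$ over the $(d-1)$-dimensional fibre, which avoids inverting a matrix inequality); you should say so rather than invent a role for it.

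For the second bound the proposal is too thin to verify. Writing the Jacobi and second-variation equations along the flow and contracting with $\ell^{ij}$ is the natural route, and you correctly identify \eqref{eq:smootrA} as the new input. But you do not extract the differential inequality you claim to integrate, you do not show how the growth/contraction rate $\sigma'$ of the spherical metric interacts with the decay rate $\tau$ of the $\nabla^3 r$ term, and you never explain where the specific exponent structure $\bigl[(r(x)+t)^{1/2}/r(x)\bigr]^{\min\{\sigma,\tau\}}$ — in particular the half-power on $r(x)+t$ — comes from. ``The main obstacle is the bookkeeping'' followed by a deferral to \cite{IS3} concedes that the essential step is not done; that is acceptable given that the paper under review makes the same deferral, but you should flag honestly that the stated rate has not been derived rather than implying it drops out of the sketch.
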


\subsection{Proof for general long-range model}
\label{subsec:proof 1}

In this subsection we show Theorem~\ref{thm:14.3.12.18.20}.

\begin{lemma}\label{lem:14.5.8.13.50}
Suppose Condition~\ref{cond:12.6.2.21.13bb}. Then 
for any $h\in C^2_{\mathrm c}(\mathcal I\times S)$ there exist the limits
\eqref{eq:53}.
\end{lemma}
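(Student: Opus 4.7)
The plan is the Cook--Kuroda method applied directly to the integral form \eqref{eq:14.3.12.20.22} of $U^\pm(t)h$. By the time-reversal identity $\overline{U^+(t)h}=U^-(t)\bar h$ it suffices to treat the upper sign. Differentiating \eqref{eq:14.3.12.20.22} under the integral sign and using unitarity of $e^{itH}$, absolute integrability of $\|\frac{\mathrm d}{\mathrm dt}[e^{itH}U^+(t)h]\|_{\mathcal H}$ on $[1,\infty)$ reduces to proving a bound
\begin{align*}
\Bigl\|\int_{\mathcal I}e^{-it\lambda}\,\mathcal R_\lambda[h(\lambda,{}\cdot{})]\,\mathrm d\lambda\Bigr\|_{\mathcal H} \le C\,t^{-2},
\qquad
\mathcal R_\lambda[\xi] := (H-\lambda)\phi^+_\lambda[\xi].
\end{align*}
This $t^{-2}$ decay is obtained by integrating by parts twice in $\lambda$,
\begin{align*}
\int_{\mathcal I}e^{-it\lambda}\mathcal R_\lambda[h(\lambda,{}\cdot{})]\,\mathrm d\lambda = -t^{-2}\int_{\mathcal I}e^{-it\lambda}\,\partial_\lambda^2\bigl(\mathcal R_\lambda[h(\lambda,{}\cdot{})]\bigr)\,\mathrm d\lambda,
\end{align*}
the boundary contributions vanishing by compactness of $\supp h$ in $\mathcal I\times S$. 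The required input is thus uniform-in-$\lambda$ $\mathcal H$-boundedness of $\mathcal R_\lambda[h(\lambda,{}\cdot{})]$ and of its first two $\lambda$-derivatives.

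The function $\phi^+_\lambda[\xi]$ of \eqref{eq:gen1B} is a zeroth-order WKB approximation to an outgoing generalized eigenfunction: the radial Hamilton--Jacobi relation $\tfrac12|\mathrm dr|^2\tilde b_\lambda^2+q_1=\lambda$ annihilates the leading radial kinetic-plus-potential symbol of $H-\lambda$ acting on $\phi^+_\lambda[\xi]$, while the amplitude factor $[2|\mathrm dr|^2(\lambda-q_1)]^{-1/4}\exp(-\tfrac12\int\mathop{\mathrm{div}}\tilde\omega\,\mathrm ds)$ solves the associated transport equation so as to eliminate the next-to-leading radial correction. What survives in $\mathcal R_\lambda[\xi]$ is: (a) the angular eikonal defect $\tfrac12|p'K_\lambda|^2\phi^+_\lambda[\xi]$ with $K_\lambda=\int_{r_0}^r\tilde b_\lambda(s,\sigma)\,\mathrm ds$, of pointwise size $O(r^{-1})$ thanks to \eqref{eq:13.9.23.16.41b2}; (b) second-order angular contributions $LK_\lambda$ and $L$ of the amplitude, controlled by Lemma~\ref{lemma:L_10b} together with the higher-order spherical bounds of Condition~\ref{cond:12.6.2.21.13bb}; (c) lower-order potential remainders bounded via Condition~\ref{cond:12.6.2.21.13bbb}. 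Combining these pointwise estimates with the amplitude decay $|\phi^+_\lambda|\sim r^{-(d-1)/2}$ (encoded in the Jacobian factor $e^{-\int\mathop{\mathrm{div}}\tilde\omega/2\,\mathrm ds}$ which compensates the volume factor $r^{d-1}$ from \eqref{eq:co_area}) yields the dyadic-shell-summable estimate $\|\mathcal R_\lambda[h(\lambda,{}\cdot{})]\|_{\mathcal H}\le C\|h(\lambda,{}\cdot{})\|_{\mathcal G}$ uniformly in $\lambda\in\supp h$.

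The main obstacle is the analogous $\mathcal H$-bound for the twice-differentiated integrand $\partial_\lambda^2\mathcal R_\lambda[h(\lambda,{}\cdot{})]$. Each application of $\partial_\lambda$ to $\phi^+_\lambda$ brings down a factor $\partial_\lambda K_\lambda=\int_{r_0}^r\partial_\lambda\tilde b_\lambda\,\mathrm ds\sim r$, so naively $\partial_\lambda^2\phi^+_\lambda$ has pointwise size $r^2\,\phi^+_\lambda$. The Hamilton--Jacobi cancellation persists under $\lambda$-differentiation up to polynomial-in-$r$ corrections that the $O(r^{-1})$ eikonal defect and the $r^{-(d-1)/2}$ amplitude factor still accommodate in the dyadic-shell calculation, provided one uses the $C^2$-extensions of $\tilde b^{\rm ex}$ and $q_1^{\rm ex}$ from Condition~\ref{cond:12.6.2.21.13bb} together with the higher-order spherical derivative bounds \eqref{eq:smootrA}--\eqref{eq:smootr} (or alternatively the stronger decay assumption \eqref{eq:13.9.12.13.46bB}). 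Executing this polynomial-growth-versus-spatial-decay accounting carefully is the core technical step of the proof.
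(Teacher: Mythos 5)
Your proposal takes a genuinely different route from the paper's: you try to run Cook--Kuroda directly on the integral representation \eqref{eq:14.3.12.20.22}, producing the needed $t$-decay by a brute-force integration by parts in $\lambda$. The paper instead first passes (Lemma~\ref{lem:14.4.29.22.28}) to the stationary-phase leading term $U^+_0(t)h$ of \eqref{eq:1609220}, which is a \emph{pointwise} formula evaluating $h$ at $\lambda_c(t,r,\sigma)$, proves $U^+(t)h = U^+_0(t)h + O_{\mathcal H}(t^{-1/8})$, and then applies Cook--Kuroda to $U^+_0(t)h$ alone using the first-order identity \eqref{eq:14.5.5.12.29B} together with an energy cut-off $\bar\chi_n(H+C)$.

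The gap in your argument is the claim that $\partial_\lambda^2\bigl(\mathcal R_\lambda[h(\lambda,{}\cdot{})]\bigr)$ is $\mathcal H$-bounded uniformly in $\lambda$. It is not. Each $\partial_\lambda$ hitting the phase of $\phi^+_\lambda$ brings down a factor $\partial_\lambda\int_{r_0}^r\tilde b_\lambda\,\d s \sim r$, and the Hamilton--Jacobi cancellation gives no help here: it is a statement about $(H-\lambda)$ applied to a \emph{fixed-$\lambda$} WKB function, and it does not improve the $r$-behaviour of the $\lambda$-derivatives of the remainder. In the dyadic accounting, $\|F_\nu\mathcal R_\lambda[h]\|^2\lesssim R_\nu^{-1}$ (barely summable), so $\|F_\nu\partial_\lambda^2\mathcal R_\lambda[h]\|^2\gtrsim R_\nu^{3}$ and $\partial_\lambda^2\mathcal R_\lambda[h]\notin\mathcal H$; the integration by parts simply cannot be closed in $\mathcal H$-norm. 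This is not cured by more careful bookkeeping: the phase $\Theta_\pm(\lambda,t,r,\sigma)$ has a stationary point $\lambda=\lambda_c(t,r,\sigma)$ lying in the interior of $\supp h(\cdot,\sigma)$ whenever $r\sim t$ (by \eqref{eq:twosidedest}), and a flat $\lambda$-integration by parts can never produce decay near such a stationary point. The paper's stationary-phase step is precisely what converts the polynomial $r$-growth you encounter into the needed $t$-decay: after extracting $U^+_0(t)h$ the localization $r\sim t$ holds on its support, so the $r^{-\beta}$-decay of the spherical derivatives (via \eqref{eq:16627}, \eqref{eq:14.3.4.3.39bb}) becomes $O(t^{-2\beta})$ with $2\beta>1$, which is what makes the Cook--Kuroda integral converge. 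Skipping this step is not an option.
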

\begin{proof} We shall employ the Cook--Kuroda method and Lemma
  \ref{lem:14.4.29.22.28}.  Due to time
  reversal invariance it suffices to consider the upper sign. 
  Let $h\in C^2_{\mathrm c}(\mathcal I\times S)$ be given. We divide
  the proof into {three steps. In the third step 
we treat the  two cases of Condition~\ref{cond:12.6.2.21.13bb}}.

\smallskip
\noindent 
\textit{Step I.}\quad We prepare for applying the Cook--Kuroda method
by introducing an energy cut-off $\bar \chi_n(H+C)$.
   Pick $C>0$ such that
  $H+C\geq 0$, and introduce $U^+_0(t)h$ by
  \eqref{eq:1609220}. 
Here we are going to show that 
\begin{align}
  \label{eq:energyL}
  \sup_{t\geq 1}\bigl\|\bar \chi_n(H+C)U^+_0(t)h\bigr\|_{\mathcal H}\to 0 
\text{ as }n\to \infty.
\end{align} 
By the Chebyshev type inequality we have 
\begin{align*}
  \bigl\|\bar \chi_n(H+C)U^+_0(t)h\bigr\|_{\mathcal H}^2\leq
  R_n^{-1}\inp{H+C}_{U^+_0(t)h},
\end{align*} 
and hence in order to prove \eqref{eq:energyL} it suffices to show that  
\begin{align}\label{eq:pest}
  \sup_{t\geq 1}\bigl\|pU^+_0(t)h\bigr\|_{\mathcal H}^2<\infty.
\end{align} 
To prove \eqref{eq:pest} let us write 
\eqref{eq:1609220} for short as
\begin{align}\label{eq:pest2}
\begin{split}
U^+_0(t)h(r,\sigma)
&=
\mathrm e^{Y(t,r,\sigma)}
\bigl(\partial_r\lambda_c(t,r,\sigma)\bigr)^{1/2}
Z(\lambda_c(t,r,\sigma),\sigma),
\end{split}\end{align}
where $Y\in C^k(\Omega_c)$ (according to $q_1\in C^k$)
and $Z\in C^2_{\mathrm c}((\lambda_1,\infty)\times D)$, 
$\overline D\subseteq S_{r_1}$, are defined by 
\begin{align*}
Y(t,r,\sigma)
&=\i K_1(t,r,\sigma)
-\tfrac12\int_{r_1}^{r}(\mathop{\mathrm{div}}\tilde\omega)(s,\sigma)\,\mathrm{d}s,\\
Z(\lambda,\sigma)
&= (2\pi)^{-1/2}\mathrm e^{-3\mathrm i\pi/4}
1_{\Omega_c(t)}(r,\sigma)
\mathrm e^{ \int_{r_0}^{r_1}[ \i \tilde
  b(s,\sigma)-(\mathop{\mathrm{div}}\tilde\omega)(s,\sigma)/2]\,\mathrm{d}s} 
h(\lambda,\sigma).
\end{align*} 
From this representation we obtain using \eqref{eq:Kder} and
\eqref{eq:derilamda} that
\begin{align*}
  \sup_{t\geq 1}\bigl\|p^rU^+_0(t)h\bigr\|_{\mathcal H}^2<\infty.
\end{align*} 
Therefore it suffices to show that 
\begin{align*}
  \sup_{t\geq 1}\bigl\|p'U^+_0(t)h\bigr\|_{\mathcal H}^2<\infty,
\end{align*} 
but here for later use we are going to show a sharper estimate
for any $\beta\in (0,\beta_c)$
\begin{align}\label{eq:basest}
\bigl\|p'U^+_0(t)h\bigr\|_{\mathcal H}\le C_\beta t^{-\beta}.
\end{align} 
We recall that $p'=-\mathrm i\nabla'$ is the spherical part of the derivative $p$, 
cf.\ \eqref{eq:176169}. 
Omitting variables, we can defferentiate \eqref{eq:pest2}: 
\begin{align}
\begin{split}
p'U^+_0h
&=
(p'Y)U^+_0h
+\tfrac12(p'\ln\partial_r\lambda_c)U^+_0h
+
\mathrm e^{Y}(\partial_r\lambda_c)^{1/2}
(p'\lambda_c)
(\partial_\lambda Z)_{|\lambda=\lambda_c}
\\&\phantom{{}={}}
+\mathrm e^{Y}(\partial_r\lambda_c)^{1/2}
(p'Z)_{|\lambda=\lambda_c}.
\end{split}
\label{eq:170721}
\end{align}
As for the first to third terms of \eqref{eq:170721},
if we note the isometry properties, cf.\ \eqref{eq:16092213}
and similar identity holding for 
$(\partial_r\lambda_c)^{1/2}(\partial_\lambda Z)_{|\lambda=\lambda_c}$
due to a change of variables,
it suffices to show that 
\begin{align}
  \sup_{t\geq 1,(t,r,\sigma)\in \Omega_c}
t^\beta\parb{|p'Y|+|p'\ln \partial_r\lambda_c|+|p'\lambda_c|}<\infty.
\label{eq:170724}
\end{align} 
 We compute using \eqref{eq:critical},  \eqref{eq:16092123} and \eqref{eq:1609211456}
 \begin{align*}
    p'Y(t,r,\sigma)
&=\biggl(\int^r_{r_1} 
p'\Bigl(\tilde b-\tfrac12\mathop{\mathrm{div}}\tilde\omega \Bigr)(s,\sigma)\,\d s
\biggr)_{
    |\lambda=\lambda_c(t,r,\sigma)},
\end{align*}
so that by \eqref{eq:16627} and \eqref{eq:twosidedest}  for $(t,r,\sigma)\in\Omega_c$
\begin{align}
\label{eq:est1}
   \begin{split}
t^{2\beta}|p'Y(t,r,\sigma)|^2
&=t^{2\beta}\ell^{ij}(r,\sigma)
\bigl[p_iY(t,r,\sigma)\bigr]
\bigl[p_jY(t,r,\sigma)\bigr]
\\&
\le 
C_1t^{2\beta}\biggl(\int_{r_1}^r 
(s/r)^{\sigma/2}
\Bigl|p'\Bigl(\pm\mathrm i\tilde b-\tfrac12\mathop{\mathrm{div}}\tilde \omega\Bigr)(s,\sigma)\Bigr|
\,\mathrm ds\biggr)_{
    |\lambda=\lambda_c(t,r,\sigma)}^2
\\&
    \le C_2t^{2\beta}r^{-2\beta}
    \le C_3.
    \end{split}
\end{align}
Similarly, since we have 
\begin{align}
\begin{split}
p'\lambda_c(t,r,\sigma)
&=-\parbb{\int^r_{r_1} b^{-2}(p'b)  (s,\sigma)\,\d s}_{|\lambda=\lambda_c(t,r,\sigma)}
\\&\phantom{{}={}}{}
\cdot\parbb{\int^r_{r_1} |\d r|^2b^{-3} (s,\sigma)\,\d s}^{-1}_{|\lambda=\lambda_c(t,r,\sigma)},
\end{split}
\label{eq:17072417}
\end{align} 
it follows that
\begin{align}\label{eq:est2}
  \sup_{(t,r,\sigma)\in \Omega_c}t^{\beta+1}|p'\lambda_c(t,r,\sigma)| \leq C_4.
 \end{align} 
We also compute using \eqref{eq:derilamda}
 \begin{align}
\begin{split}
   p'\ln \partial_r\lambda_c
&=-\Bigl [p'\parbb{b\int^r_{r_1} |\d
  r|^2 b^{-3} \,\d s}+(p'\lambda_c)\parbb{\partial_ \lambda \parbb{b\int^r_{r_1} |\d
  r|^2 b^{-3} \,\d s}}\Bigr]_{ |\lambda=\lambda_c}
\\ &\phantom{{}={}}{}\cdot \parbb{b\int^r_{r_1} |\d
  r|^2 b^{-3} \,\d s}^{-1}_{ |\lambda=\lambda_c},
\end{split}\label{eq:17072418}
 \end{align} so by estimating similarly again we obtain 
\begin{align}\label{eq:est3}
  \sup_{(t,r,\sigma)\in \Omega_c}
t^{\beta+1}|p'\ln \partial_r\lambda_c| \leq  C_5.
 \end{align} 
We have shown that the first to third terms of \eqref{eq:170721}
satisfy the desired estimate.
The fourth term of \eqref{eq:170721} can be bounded similarly using 
\eqref{eq:16627} and a change of variables.
Hence we obtain \eqref{eq:basest}, and in particular \eqref{eq:pest}.

\smallskip
\noindent
\textit{Step I\hspace{-.1em}I.}\quad
Here we reduce the proof of the lemma to the  following estimate
involving $L$ where $\chi=\chi_n(H+C)$:
\begin{align}
\int_{1}^\infty \bigl\|\chi L
U^+_0(t)h\bigr\|_{\mathcal H}\,\mathrm dt<\infty.
\label{eq:14.5.8.16.20ab}
\end{align}

Due  to  \eqref{eq:14.5.9.12.13BB} and \eqref{eq:energyL} 
 the lemma follows if we can show that for any large
$n \ge 1$
\begin{align*}
\int_{1}^\infty \bigl\|\chi\tfrac{\mathrm d}{\mathrm dt}\mathrm
e^{\mathrm itH}U^+_0(t)h\bigr\|_{\mathcal H}\,\mathrm dt<\infty.
\end{align*}  
Whence in turn it suffices to show that 
\begin{align}
\int_{1}^\infty \bigl\|\chi\parb{ H-G^+(t)}
U^+_0(t)h\bigr\|_{\mathcal H}\,\mathrm dt<\infty.
\label{eq:14.5.8.16.20}
\end{align} 
{By the expressions \eqref{eq:15091010b} and \eqref{eq:14.5.5.12.29B}
we can write 
\begin{align*}
  H-G^+(t)=
\tfrac 12(A-b_{\lambda_c})\tilde\eta (A-b_{\lambda_c})+\tfrac 12 L
+q_2+\tfrac14(\nabla^r\tilde\eta)(\Delta r).
\end{align*}
This leads to the formula
\begin{align*}
&  
\parb {H-G^+(t)}1_{\Omega_c(t)}(r,\sigma)\mathrm e^{Y(t,r,\sigma)}
=
1_{\Omega_c(t)}(r,\sigma)\mathrm e^{Y(t,r,\sigma)}
\parbb{\tfrac 12 p_r| \d r|^2p_r+\tfrac 12 L+q_2}.
\end{align*}
Whence, if we can verify \eqref{eq:14.5.8.16.20ab}
and 
\begin{align}\label{eq:14.5.8.16.20aa}
&\int_{1}^\infty \bigl\|\chi \mathrm e^{Y(t,\cdot)}
p_r| \d r|^2 p_r
(\partial_r\lambda_c(t,{}\cdot{}))^{1/2}
Z(\lambda_c(t,\cdot),\cdot)
\bigr\|_{\mathcal H}\,\mathrm
dt<\infty,
\end{align}
}then \eqref{eq:14.5.8.16.20} follows and the proof is done.

Now let us prove \eqref{eq:14.5.8.16.20aa}.
We use the product rule and
\eqref{eq:derilamda} to compute 
$$ p_r| \d r|^2 p_r
(\partial_r\lambda_c(t,{}\cdot{}))^{1/2}
{Z(\lambda_c(t,{}\cdot{}),{}\cdot{}).}$$ 
Only the  term 
\begin{align*}
  -|\d r|^2 \tfrac{\partial^2}{\partial
    r^2}(\partial_r\lambda_c(t,{}\cdot{}))^{1/2}
{Z(\lambda_c(t,{}\cdot{}),{}\cdot{})}
\end{align*} needs examination. In turn, when we expand it further, only a single
term might not contribute in agreement with
\eqref{eq:14.5.8.16.20aa}. This is a term that contains  a  second
order derivative of $q_{12}$. Explicitly it may be  expressed as 
\begin{align*}
  -\i\tfrac 12|\d r|^4 b^{-2}
    _{|\lambda=\lambda_c} {p_r\parb{\partial_r q_{12}}}
(\partial_r\lambda_c(t,{}\cdot{}))^{1/2}
{Z(\lambda_c(t,{}\cdot{}),{}\cdot{})}
\end{align*} that possibly do  not seem to agree with
\eqref{eq:14.5.8.16.20aa}. However since $\partial_r
q_{12}=O\parb{r^{-1-\rho/2}}$ we can pull the operator $p_r$ to the
left  in the corresponding time-integral, 
bound it with the factor $\chi$ (note that indeed $\chi p_r$ is
bounded) and then 
use that $t^{-1-\rho/2}$ is integrable.
Hence \eqref{eq:14.5.8.16.20aa} is verified,
and the proof of the lemma reduces to \eqref{eq:14.5.8.16.20ab}.

\smallskip
\noindent
\textit{Step I\hspace{-.1em}I\hspace{-.1em}I.}\quad 
Finally we
verify the estimate \eqref{eq:14.5.8.16.20ab} 
by using 
Condition~\ref{cond:12.6.2.21.13bb}.

First we assume (\ref{item:14.5.1.8.30}) of Condition~\ref{cond:12.6.2.21.13bb}.
To prove the estimate \eqref{eq:14.5.8.16.20ab} 
it suffices to show
\begin{align*}
  \int_{1}^\infty \bigl\|p'
U^+_0(t)h\bigr\|_{\mathcal H}\,\mathrm dt<\infty.
\end{align*} 
However, this bound is easily verified 
by noting that \eqref{eq:basest} is valid for  some
$\beta>1$ in this case.

Hence for the rest of the proof we 
assume (\ref{item:14.5.1.8.31}) of Condition~\ref{cond:12.6.2.21.13bb}. 
Using the fibration \eqref{eq:15091010} of $L$ in spherical coordinates
and the notation as in \eqref{eq:pest2},
we can write 
\begin{align}\label{eq:primeder}
\begin{split}
L_rU^+_0h&=
(L_rY)U^+_0h
+\tfrac12(L_r\ln\partial_r\lambda_c)U^+_0h
+\mathrm e^{Y}(\partial_r\lambda_c)^{1/2}
(L_r\lambda_c)(\partial_\lambda Z)_{|\lambda=\lambda_c}
\\&\phantom{{}={}}
+\mathrm e^{Y}(\partial_r\lambda_c)^{1/2}(L_rZ)_{|\lambda=\lambda_c}
+|p'Y|^2U^+_0h
+\ell^{ij}(p_iY)(p_j\ln\partial_r\lambda_c)U^+_0h
\\&\phantom{{}={}}
+2\ell^{ij}(p_iY)(p_j\lambda_c)\mathrm e^{Y}(\partial_r\lambda_c)^{1/2}(\partial_\lambda Z)_{|\lambda=\lambda_c}
\\&\phantom{{}={}}
+2\ell^{ij}(p_iY)\mathrm e^{Y}(\partial_r\lambda_c)^{1/2}(p_jZ)_{|\lambda=\lambda_c}
+\tfrac14|p'\ln\partial_r\lambda_c|^2U^+_0h
\\&\phantom{{}={}}
+\ell^{ij}(p_i\ln\partial_r\lambda_c)\mathrm e^{Y}(\partial_r\lambda_c)^{1/2}(p_j\lambda_c)(\partial_\lambda Z)_{|\lambda=\lambda_c}
\\&\phantom{{}={}}
+\ell^{ij}(p_i\ln\partial_r\lambda_c)\mathrm e^{Y}(\partial_r\lambda_c)^{1/2}(p_jZ)_{|\lambda=\lambda_c}
\\&\phantom{{}={}}
+\mathrm e^{Y}(\partial_r\lambda_c)^{1/2}|p'\lambda_c|^2(\partial_\lambda^2 Z)_{|\lambda=\lambda_c}
\\&\phantom{{}={}}
+2\ell^{ij}\mathrm e^{Y}(\partial_r\lambda_c)^{1/2}(p_i\lambda_c)(\partial_\lambda p_j Z)_{|\lambda=\lambda_c}
,
\end{split}
\end{align} 
cf.\ \eqref{eq:170721}. 
Similarly to Step I\hspace{-.1em}I, we can show that 
the fifth to thirteenth terms of \eqref{eq:primeder} are 
$O(t^{-2\beta})$ for any $\beta\in (0,\beta_c)$.
Here we in particular implemented \eqref{eq:170724} and the isometric properties 
due to a change of variables.
To bound the first term of \eqref{eq:primeder}
we first compute by \eqref{eq:critical},  \eqref{eq:16092123}, \eqref{eq:1609211456}
and \eqref{eq:17616945} that 
 \begin{align*}
L_rY(t,r,\sigma)
&=\biggl(L_r\int^r_{r_1} 
\Bigl(\tilde b-\tfrac12\mathop{\mathrm{div}}\tilde\omega \Bigr)(s,\sigma)\,\d s
\biggr)_{|\lambda=\lambda_c(t,r,\sigma)}
\\&
=
\biggl(\ell^{ij}(r,\sigma)
\int^r_{r_1} 
\Bigl[\nabla'^2\bigl(\tilde b-\tfrac12\mathop{\mathrm{div}}\tilde\omega \bigr)\Bigr]_{ij}(s,\sigma)\,\d s
\biggr)_{|\lambda=\lambda_c(t,r,\sigma)}
\\&\phantom{{}={}}{}
+\biggl(\int^r_{r_1}
(L\tilde y)^i (s-r,r,\sigma)
\Bigl[\partial_i\bigl(\tilde b-\tfrac12\mathop{\mathrm{div}}\tilde\omega \bigr)\Bigr](s,\sigma)\,\d s
\biggr)_{|\lambda=\lambda_c(t,r,\sigma)}.
\end{align*}
Then by \eqref{eq:16627}, \eqref{eq:14.3.4.3.39bb}
and (\ref{item:14.5.1.8.31}) of Condition~\ref{cond:12.6.2.21.13bb}
we obtain that the first term of \eqref{eq:primeder} is
$O(t^{-2\beta})$ for any $\beta\in (0,\beta_c)$.
The second to fourth terms of \eqref{eq:primeder} are treated basically
in the same manner as the first.
We first use the formula \eqref{eq:17616945} with $t=0$ 
to reduce the estimate to those of 
$$\nabla'\ln\partial_r\lambda_c,
\quad 
\nabla'^2\ln\partial_r\lambda_c,
\quad  
\nabla'\lambda_c,\quad 
\nabla'^2\lambda_c,\quad 
(\nabla'Z)_{|\lambda=\lambda_c},\quad
(\nabla'^2Z)_{|\lambda=\lambda_c}.$$ 
The first order derivatives are already estimated in Step I\hspace{-.1em}I.
The second order derivatives are computed, e.g., from \eqref{eq:17072417} and \eqref{eq:17072418},
and then estimated similarly.
Hence we can conclude that $L_rU^+_0h$ is $O(t^{-2\beta})$ for any $\beta\in(0,\beta_c)$,
and this in particular implies the integrability \eqref{eq:14.5.8.16.20ab}.
\end{proof}

Next  we extend the wave operator  to  a bounded operator $W^\pm\colon \widetilde{\mathcal H}_{\mathcal I}
\to \mathcal H_{\mathcal I}$ 
and show \eqref{eq:54}.
\begin{lemma}\label{lem:14.5.8.13.51}
Suppose  Condition~\ref{cond:12.6.2.21.13c}  and that there exist the limits \eqref{eq:53} for any 
$h\in C^2_{\mathrm c}(\mathcal I\times S)$.
Then $W^\pm$ extend to  isometric   operators $W^\pm\colon \widetilde{\mathcal H}_{\mathcal I}
\to \mathcal H_{\mathcal I}$,
and \eqref{eq:54} holds. In particular $W^\pm\colon \widetilde{\mathcal H}_{\mathcal I}
\to \mathcal H_{\mathcal I}$ are unitary. 
\end{lemma}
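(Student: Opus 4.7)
The plan is to establish the lemma in three parts: (i) extend $W^\pm$ to isometries with range in $\mathcal H_{\mathcal I}$, (ii) identify $(W^\pm)^*$ with $F^\pm$, and (iii) conclude unitarity. By time-reversal invariance I will treat only the ``$+$'' case throughout.

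For (i), unitarity of $e^{itH}$ gives $\|e^{itH}U^+(t)h\|_{\mathcal H}=\|U^+(t)h\|_{\mathcal H}$ for each $t$, and combining with the asymptotic isometry $\|U^+(t)h\|_{\mathcal H}\to\|h\|_{\widetilde{\mathcal H}_{\mathcal I}}$ noted just after Theorem~\ref{thm:14.3.12.18.20} (a consequence of \eqref{eq:16092213} and \eqref{eq:14.5.9.12.13BB} in Lemma~\ref{lem:14.4.29.22.28}) yields $\|W^+h\|_{\mathcal H}=\|h\|_{\widetilde{\mathcal H}_{\mathcal I}}$ on $C^2_{\mathrm c}(\mathcal I\times S)$. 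Density extends $W^+$ uniquely to an isometry $\widetilde{\mathcal H}_{\mathcal I}\to\mathcal H$. The identity $U^+(t+s)h=U^+(t)(e^{-isM_\lambda}h)$ is immediate from \eqref{eq:14.3.12.20.22}, and passing to the limit yields the intertwining $e^{isH}W^+=W^+e^{isM_\lambda}$ for all $s\in\mathbb R$. Since $\mathcal I$ is open and $C^2_{\mathrm c}(\mathcal I\times S)$ is dense in $\widetilde{\mathcal H}_{\mathcal I}$, this refines the range statement to $\Ran(W^+)\subseteq\mathcal H_{\mathcal I}$.

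Step (ii) is the main content. For $\psi$ in a suitable dense subset of $\mathcal H_{\mathcal I}$ (e.g.\ $\psi=f(H)\phi$ with $f\in C^\infty_{\mathrm c}(\mathcal I)$ and $\phi\in B$) and $h\in C^2_{\mathrm c}(\mathcal I\times S)$, I would expand
\[
\langle \psi,W^+h\rangle_{\mathcal H}=\lim_{t\to\infty}\langle e^{-itH}\psi,U^+(t)h\rangle,
\]
replace $U^+(t)h$ by the stationary-phase leading term $U^+_0(t)h$ of \eqref{eq:1609220} using \eqref{eq:14.5.9.12.13BB}, and perform the change of variables $r\mapsto\lambda=\lambda_c(t,r,\sigma)$ supplied by Lemma~\ref{lem:14.4.29.22.28bb} inside the resulting spherical-coordinate integral. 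The inner product then rewrites as
\[
\int_{\mathcal I}\int_{S}\bar h(\lambda,\sigma)\,\Psi_t(\lambda,\sigma)\,\d\tilde{\mathcal A}(\sigma)\,\d\lambda,
\]
where $\Psi_t$ is a WKB-weighted evaluation of $e^{-itH}\psi$ at the classical radius $r=r(t,\lambda,\sigma)$, carrying the phase $e^{-iK(t,r,\sigma)}$ and Jacobian factors from Lemma~\ref{lem:14.4.29.22.28bb}. Substituting Stone's formula $\psi=(2\pi i)^{-1}\int_{\mathcal I}[R(\lambda'+i0)-R(\lambda'-i0)]\psi\,\d\lambda'$ into $e^{-itH}\psi$ and performing one stationary-phase integration in $\lambda'$ (whose critical point matches the outgoing phase built into $K$), only the $R(\lambda+i0)\psi$ contribution survives, and the surviving expression is precisely the averaged integrand appearing in \eqref{eq:extbF2}. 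Hence $\Psi_t(\lambda,\sigma)$ converges to $(2\pi)^{-1}\overline{F^+(\lambda)\psi(\sigma)}$ in the $\mathcal G$-averaged sense of \eqref{eq:extbF2}, locally uniformly in $\lambda$, which yields $\langle\psi,W^+h\rangle=\langle F^+\psi,h\rangle_{\widetilde{\mathcal H}_{\mathcal I}}$. Continuity of both sides in $\psi\in\mathcal H_{\mathcal I}$ then concludes $F^+=(W^+)^*$.

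For (iii), with $(W^+)^*=F^+$ isometric by Proposition~\ref{prop:dist-four-transf} and $W^+$ itself isometric, we have simultaneously $(W^+)^*W^+=I_{\widetilde{\mathcal H}_{\mathcal I}}$ and $W^+(W^+)^*=I_{\mathcal H_{\mathcal I}}$, hence $W^+$ is unitary. The hard part will be step (ii): justifying the Fubini-type interchange between the time integral and the stationary-phase expansion in $\lambda'$, controlling the error $U^+(t)h-U^+_0(t)h$ when paired with $e^{-itH}\psi$ (neither of which lies a priori in $B$ uniformly in $t$), and matching the resulting limit rigorously against the averaged stationary formula \eqref{eq:extbF2} for $F^+(\lambda)\psi$; the remaining steps reduce to standard density and functional-analytic manipulations.
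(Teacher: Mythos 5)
Your overall scaffold (isometry of $W^+$, identification of $(W^+)^*$ with $F^+$, deduction of unitarity from both being isometries) matches the paper's, and parts (i) and (iii) are sound --- the intertwining $e^{isH}W^+ = W^+ e^{isM_\lambda}$ is a clean way to get $\Ran W^+ \subseteq \mathcal H_{\mathcal I}$, and the paper instead reads this containment off the final pairing identity, but both are fine.

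The real divergence is in step (ii), and there the gap you flag at the end of your write-up is genuine and is not just a matter of ``Fubini details.'' Your route is: pair $\langle e^{-\i tH}\psi, U^+_0(t)h\rangle$, change variables $r\mapsto\lambda_c$, insert Stone's formula for $e^{-\i tH}\psi$, and then do a second stationary-phase argument in the spectral variable $\lambda'$. This second stationary-phase step requires knowing that $R(\lambda'+\i 0)\psi$ (and not $R(\lambda'-\i 0)\psi$) carries an outgoing oscillation that matches the phase $K$, and it requires uniform control of $\lambda'$-derivatives of the boundary values of the resolvent to get a localization to $\lambda'=\lambda_c$. That outgoing asymptotic form of $R(\lambda'+\i 0)\psi$ is precisely the content one is trying to reconnect to $F^+(\lambda)\psi$ via \eqref{eq:extbF2}, so the argument risks circularity, and the quantitative input needed (uniformity in $t$ and $r$, control of the error $U^+(t)h-U^+_0(t)h$ against $e^{-\i tH}\psi$) is substantial. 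The paper avoids this entirely by computing $\langle\psi,W^+h\rangle$ through an Abelian limit
\[
2\pi\i\,\langle\psi,W^+h\rangle
=\lim_{\epsilon\downarrow 0}\epsilon\int_0^\infty\e^{-\epsilon t}
\bigl\langle\psi,\e^{\i tH}\bar\chi_n U^+(t)h\bigr\rangle\,\d t,
\]
interchanging the $t$- and $\lambda$-integrations so that the $t$-integral produces $\i\epsilon R(\lambda-\i\epsilon)$, then using $\i\epsilon R(\lambda-\i\epsilon)=I-R(\lambda-\i\epsilon)(H-\lambda)$ and an integration by parts in $r$ (the spherical decomposition of $H$ together with the radiation condition bounds of Theorem~\ref{prop:radiation-conditions} and the bound \eqref{eq:13:9:34:16:20bb}). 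This produces, directly and without any second stationary-phase analysis, a boundary term identified with $\i\langle F^+(\lambda)\psi,h(\lambda)\rangle_{\mathcal G}$, with a cancellation of the remaining $\bar\chi_n\phi^+_\lambda[h(\lambda)]$ contributions. The moral: the Abelian-limit/resolvent-identity mechanism replaces the time asymptotics of $e^{-\i tH}\psi$ (which you would have to reprove) by the already-established boundary-value and radiation-condition estimates, and that is what makes the paper's step (ii) tractable where your proposed route would need essentially new input.
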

\begin{proof}
{Again only the upper sign is considered.} Due to \eqref{eq:16092213} and
\eqref{eq:14.5.9.12.13BB} the operator $W^+$ is isometric. The
property \eqref{eq:54} (to be shown) implies the unitarity. Whence it
suffices to show that $W^+$ maps into $\mathcal H_{\mathcal I}$ and
that \eqref{eq:54} is fulfilled.

We shall  proceed  partially following \cite[Appendix A]{HS}.
For any $h\in C^2_{\mathrm c}(\mathcal I\times S)$ and $\psi\in
{\mathcal H_{1+}}\cap \vH^1$
\begin{align*}
2\pi\mathrm i\inp{\psi, W^+ h}
&=2\pi\mathrm i\lim_{\epsilon\downarrow 0}\epsilon \int_0^\infty\mathrm e^{-\epsilon t}
\bigl\langle \psi,\mathrm e^{\mathrm itH}\bar \chi_nU^+(t)h\bigr\rangle\,\mathrm dt\\
&=\lim_{\epsilon\downarrow 0}\epsilon\int_0^\infty\Bigl\langle \mathrm
e^{\mathrm -\i t(H-\mathrm i\epsilon )}\psi,
\int_{\mathcal I} 
\mathrm e^{-\mathrm it\lambda}\bar \chi_n\phi_\lambda^+[h(\lambda)]\,\mathrm d\lambda
\Bigr\rangle \mathrm dt;
\end{align*} here the cut-off function  $\bar \chi_n $ is 
chosen  with  $n$ big enough (possibly depending  on  $h$) 
 to assure
 the property  $\bar \chi_n \phi_\lambda^+[h(\lambda)]\in\vN$ for all
 $\lambda$. Moreover, here
 and below,  we use the $L^2$-pairing of the spaces  $\mathcal
H_s$ and $
\mathcal H_{-s}$ for any real $s$ (denoted by
$\langle{}\cdot{},{}\cdot{}\rangle$ as   the inner product).   
Note that by  commutation we may bound
\begin{align*}
\int_0^\infty
\bigl\|r\mathrm e^{\mathrm -\i t(H-\mathrm i\epsilon )}\psi\bigr\|
\,\mathrm dt<\infty,
\end{align*}
so that we may insert a configuration cut-off and obtain
\begin{align*}
2\pi\mathrm i\inp{\psi, W^+ h}
=\lim_{\epsilon\downarrow 0}\lim_{m\to \infty}\epsilon\Bigl\langle \psi,
\int_0^\infty\mathrm e^{\mathrm it(H+\mathrm i\epsilon)}\int_{\mathcal I} 
\mathrm e^{-\mathrm it\lambda} \chi_m \bar \chi_n \phi^+_\lambda[h(\lambda)]\,\mathrm d\lambda
\mathrm dt
\Bigr\rangle.
\end{align*} 
Now we can change the order of integration  and then compute, abbreviating $\phi_\epsilon= R(\lambda+\mathrm
i\epsilon)\psi$,
\begin{align*}
\begin{split}
&2\pi\mathrm i\inp{\psi, W^+ h}\\
&=\lim_{\epsilon\downarrow 0}\lim_{m\to \infty}
\int_{\mathcal I} \epsilon\Bigl\langle \psi,
\int_0^\infty\mathrm e^{\mathrm it(H-\lambda+\mathrm i\epsilon )}\chi_{n,m}\phi^+_\lambda[h(\lambda)]\,
\mathrm dt
\Bigr\rangle\,\mathrm d\lambda\\
&=\lim_{\epsilon\downarrow 0}\lim_{m\to \infty}
\int_{\mathcal I} \bigl\langle \psi,
\mathrm i\epsilon R(\lambda-\mathrm i\epsilon)\chi_{n,m} \phi^+_\lambda[h(\lambda)]
\bigr\rangle\,\mathrm d\lambda\\
&=\lim_{\epsilon\downarrow 0}\lim_{m\to \infty}
\int_{\mathcal I} 
\parbb{\bigl\langle \psi,\chi_{n,m} \phi^+_\lambda[h(\lambda)]\bigr\rangle
-\bigl\langle \psi,R(\lambda-\mathrm i\epsilon)(H-\lambda)\chi_{n,m} \phi^+_\lambda[h(\lambda)]\bigr\rangle}
\,\mathrm d\lambda\\
&=\int_{\mathcal I} 
\bigl\langle \psi,\bar \chi_n \phi^+_\lambda[h(\lambda)]\bigr\rangle
\,\mathrm d\lambda
-
\lim_{\epsilon\downarrow 0}\lim_{m\to \infty}
\int_{\mathcal I} 
\bigl\langle \phi_\epsilon,
 (H-\lambda)\chi_{n,m} \phi^+_\lambda[h(\lambda)]\bigr\rangle
\,\mathrm d\lambda.
\end{split}
\end{align*} We shall  compute the double limit by an integration by
parts procedure  first  rewriting  the integrand by
using the spherical decomposition formula \cite[(2.8a)]{IS3} (as in  the proof of \cite[Theorem
1.14]{IS3}). A commutation error disappears when taking  the
$m$-limit and whence,  more precisely,
\begin{align*}
  \lim_{m\to \infty}
2\int_{\mathcal I} 
\bigl\langle \phi_\epsilon,
(H-\lambda)\chi_{n,m} \phi^+_\lambda[h(\lambda)]\bigr\rangle
\,\mathrm d\lambda
=\lim_{m\to \infty}\int_{\mathcal I} (T_1+T_2+T_3)\,\mathrm d\lambda,
\end{align*} where (for some $\kappa>1$ and $\i p'$ being the
covariant derivative on $S_r$)
\begin{align*}
  T_1&=\inp{(A+\bar a)\phi_\epsilon, \chi_m\tilde\eta(A-a)\bar \chi_n\phi^+_\lambda[h(\lambda)]},\\
T_2 &=\int^\infty _{r_0} \chi_m(r)\inp{p'\phi_\epsilon, p'\bar \chi_n\phi^+_\lambda[h(\lambda)]}_{\vG_r}\,\d r,\\
T_3 &=\int^\infty _{r_0} \chi_m(r)\inp{\phi_\epsilon,
    O(r^{-\kappa})\bar \chi_n\phi^+_\lambda[h(\lambda)]}_{\vG_r}\,\d r.
\end{align*} We can compute the $m$-limit from this representation,
however since we need to compute a double limit it turns out to be
better to compute the $\epsilon$-limit first. This change of order is
legitimate since the $m$-limit exists \emph{uniformly} in small
$\epsilon$ due to Theorem \ref{prop:radiation-conditions} and
\eqref{eq:13:9:34:16:20bb} (at this point
Condition~\ref{cond:12.6.2.21.13c} is crucial). By taking the
$\epsilon$-limit inside the $\lambda$-integrals we can then compute
the $m$-limit for each term after picking up (for $T_1$) the factor
$\chi_m'$ (i.e. the $r$-derivative of $\chi_m$) from a commutation
(precisely as in the proof of \cite[Theorem 1.14]{IS3}). Now the
corresponding term \emph{does} contribute in the $m$-limit and we
obtain the following formula, abbreviating $\phi= R(\lambda+\mathrm
i0)\psi$,
\begin{align*}
  -\lim_{\epsilon\downarrow 0}\lim_{m\to \infty}&
\int_{\mathcal I} 
\bigl\langle \phi_\epsilon,
 (H-\lambda)\chi_{n,m} \phi^+_\lambda[h(\lambda)]\bigr\rangle
\,\mathrm d\lambda\\
&=\int_{\mathcal I} \parbb{\i\inp {F^+(\lambda)\psi, h(\lambda)}_{\vG} -\inp{\psi,
    \bar \chi_n \phi_\lambda^+[h(\lambda)]}}\,\mathrm d\lambda.
\end{align*} By combining this with the previous computation we observe a
cancellation and conclude that
\begin{align}\label{eq:idenbas}
  2\pi\mathrm i\inp{\psi, W^+ h}=\int_{\mathcal I} \i\inp
  {F^+(\lambda)\psi, h(\lambda)}_{\vG}\,\mathrm d\lambda.
\end{align} 

Let us consider $\psi=f(H)\breve \psi$ where $f\in
C_\c^\infty(\R)$ and $\breve\psi\in {\mathcal H_{1+}}\cap \vH^1$. We
obtain from \eqref{eq:idenbas} that
\begin{align*}
  2\pi\mathrm i\inp{(I-f(H))\breve\psi, W^+ h}=\int_{\mathcal I} \i(1-f(\lambda))\inp {F^+(\lambda)\breve\psi, h(\lambda)}_{\vG}\,\mathrm d\lambda. 
\end{align*}
  Clearly if $f$ is chosen
such that $f(\lambda)h(\lambda,\cdot)=h(\lambda,\cdot)$ for all
$\lambda$ the right-hand
side vanishes and whence by density we conclude that $W^+$ maps into
$\vH_{\vI}$ and extends to an isometry $W^+:\tilde \vH_\vI \to
\vH_{\vI}$. Moreover by choosing $f\in C_\c^\infty(\vI)$ and 
$\breve\psi\in {\mathcal H_{1+}} $ and then using $\psi=f(H)\breve
\psi$ in 
\eqref{eq:idenbas} it follows (again by density) that
$(W^+)^*\psi=F^+\psi$ for all $\psi\in\vH_{\vI}$ showing   
\eqref{eq:54}.
\end{proof}

Theorem \ref{thm:14.3.12.18.20}  
is a direct consequence of 
Lemmas~\ref{lem:14.5.8.13.50} and \ref{lem:14.5.8.13.51}.
 In fact  using only Lemma
 \ref{lem:14.5.8.13.51}  we obtain   a version of Theorem \ref{thm:dist-four-transf} 
 without need for  Condition~\ref{cond:12.6.2.21.13bb}.
\begin{corollary}\label{cor:14.5.10.7.30} Suppose  Condition~\ref{cond:12.6.2.21.13c}  and that there exist the limits \eqref{eq:53} for any 
  $h\in C^2_{\mathrm c}(\mathcal I\times S)$.
Then the operators  $F^\pm\colon \mathcal H_{\mathcal
  I}\to\widetilde{\mathcal H}_{\mathcal I}$ are  unitary diagonalizing transformations.
\end{corollary}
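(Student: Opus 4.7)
The plan is to assemble the corollary from two ingredients already established in the excerpt: the isometric embedding and intertwining property from Proposition~\ref{prop:dist-four-transf}, and the identification $F^\pm=(W^\pm)^*$ with unitary $W^\pm$ from Lemma~\ref{lem:14.5.8.13.51}. Both results are available under Condition~\ref{cond:12.6.2.21.13c} alone, so no appeal to Condition~\ref{cond:12.6.2.21.13bb} is required; the hypothesis that the limits \eqref{eq:53} exist for every $h\in C^2_{\mathrm c}(\mathcal I\times S)$ is exactly what activates Lemma~\ref{lem:14.5.8.13.51}.

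First I would invoke Lemma~\ref{lem:14.5.8.13.51} to obtain the unitary extensions $W^\pm\colon \widetilde{\mathcal H}_{\mathcal I}\to \mathcal H_{\mathcal I}$ together with the identity $F^\pm=(W^\pm)^*$. Since the adjoint of a unitary operator between Hilbert spaces is unitary, this immediately shows that $F^\pm\colon \mathcal H_{\mathcal I}\to\widetilde{\mathcal H}_{\mathcal I}$ are unitary. This is the part of Theorem~\ref{thm:dist-four-transf} requiring Condition~\ref{cond:12.6.2.21.13bb} in its original proof; here it is obtained for free from the time-dependent construction.

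Next I would upgrade the intertwining inclusion $F^\pm H_{\mathcal I}\subseteq M_\lambda F^\pm$ of Proposition~\ref{prop:dist-four-transf} to an equality. Taking adjoints in the inclusion yields $(F^\pm)^* M_\lambda \subseteq H_{\mathcal I} (F^\pm)^*$, and conjugating on both sides by the now-unitary $F^\pm$ gives $M_\lambda F^\pm \subseteq F^\pm H_{\mathcal I}$. Combined with the original inclusion, this produces $F^\pm H_{\mathcal I}=M_\lambda F^\pm$, i.e.\ the diagonalizing identity.

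There is no real obstacle: the corollary is purely a functorial consequence of the earlier two results, and the proof is a few lines of operator-theoretic bookkeeping. The only conceptual point worth highlighting is that the unitarity, which in the stationary approach of Theorem~\ref{thm:dist-four-transf} required the extra smoothness of Condition~\ref{cond:12.6.2.21.13bb}, is here traded for the existence of the time-dependent limits~\eqref{eq:53}; thus Corollary~\ref{cor:14.5.10.7.30} provides an alternative route to diagonalization that bypasses the higher-order geometric bounds.
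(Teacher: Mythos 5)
Your proof is correct and follows essentially the same route as the paper: both rest on the isometry and intertwining statements of Proposition~\ref{prop:dist-four-transf} together with Lemma~\ref{lem:14.5.8.13.51}, which supplies the unitary $W^\pm$ and the identity $F^\pm=(W^\pm)^*$. The paper's own proof is terser---it notes only that $F^+$ is an isometry which is onto because $W^+$ is an isometry and $F^+=(W^+)^*$, leaving the upgrade of the inclusion $F^\pm H_{\mathcal I}\subseteq M_\lambda F^\pm$ to an equality implicit---whereas you spell out that final standard step by the adjoint-and-conjugation argument.
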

\begin{proof}
We know that $F^+$ is   an isometry,
and hence it suffices to show that $F^+$ is onto.
But  since $W^+$ is an isometry 
the conclusion follows from  \eqref{eq:54}.
\end{proof}

\subsection{Proofs for short-range and Dollard type models}\label{subsec:proof 3}
In this subsection we prove Theorems~\ref{thm:14.3.12.18.20sr} and
\ref{thm:14.3.12.18.20srb} stated in Subsection~\ref{subsec:170531}
under the short-range assumption 
and Theorem~\ref{thm:14.3.12.18.20dol} stated in 
Subsection~\ref{subsec:17053116} under the Dollard type assumption.

Let us begin with the proof of 
Theorem~\ref{thm:14.3.12.18.20dol}  
since the  short-range model can be treated easily using a bound from
the proof. It suffices to do the upper case due to time reversal 
invariance.

\begin{proof}[Proof of Theorem~\ref{thm:14.3.12.18.20dol}]
Let $\theta=\theta(\sigma, \lambda)$ be the real-valued 
function appearing in Corollary~\ref{cor:dollard}. Fix  any $h\in C^2_{\mathrm c}(\mathcal I\times
S)$ and corresponding quantities $\lambda_1$, $D$ and $r_1$ (determining
$ U^+_0(t)h$ by \eqref{eq:1609220}). 
In order to prove the existence of the limit \eqref{eq:53dol} for this
$h$ it
suffices to show that 
\begin{align}\label{eq:asysim}
  \|U_{\rm do}^+(t)\check h- U^+_0(t)h\|\to 0
\text{ as }t\to \infty;\quad  h:=\e^{\i \theta}\check h,
\end{align}
cf.\ Theorem~\ref{thm:14.3.12.18.20} and Lemma~\ref{lem:14.4.29.22.28}.  By the properties  of $h$ we can find $\lambda_2>\lambda_1$ such that
 $\lambda\leq \lambda_2$ on $\supp h$. Then,  recalling 
 the quantities of \eqref{eq:1609220},   we let 
 \begin{align*}
   \Omega^2_c(t)=\{(r,t)\in\Omega_c(t)| \quad \lambda_c\leq
 \lambda_2\}.
 \end{align*} 
 First  we note 
\begin{align*}
\|U^+_0(t)h\|=\|h\|\geq \|U_{\rm do}^+(t)\check h\|,
\end{align*}
 reducing  the proof of \eqref{eq:asysim} to  showing
\begin{align}\label{eq:asysimb}
  \|1_{\Omega^2_c(t)}U_{\rm do}^+(t)\check h- U^+_0(t)h\|\to 0
\text{ as }t\to  \infty.
\end{align}

Hence let us prove \eqref{eq:asysimb}.
Obviously we need to `compare' the expressions  \eqref{eq:1609220} and \eqref{17053120}
with the factor $\e^{\i \theta}$.  By using 
\eqref{eq:dollard2} and \eqref{eq:twosidedest} 
 we obtain that  
\begin{align*}
  \sup_{(r,\sigma)\in \Omega^2_c(t)}\int_{{r}}^\infty\big (
b_{\lambda_c}(s,\sigma)-b_{{\rm do,\lambda_c}} (s,\sigma) \big )\,\mathrm{d}s
=o(t^0).
\end{align*} 
 Whence it  suffices for  \eqref{eq:asysimb} to show
 the following asymptotics 
uniformly in 
$(r,\sigma)\in \Omega^2_c(t)$:
\begin{align*}
\lambda_c&=\lambda_0(\sigma)+\tfrac 12 \tfrac {(r-r_0)^2}{t^2}+o(t^0),\\
\partial_r\lambda_c&=(r-r_0)/t^2+o(t^{-1}),\\
      K_{\rm do}&=\int_{{r_0}}^r
b_{{\rm do},\lambda_c}(
s,\sigma)\,\mathrm{d}s-\lambda_c t+o(t^0).
\end{align*}  
 Now to  prove these asymptotics we  do  a Taylor expansion of  the
 integrand of \eqref{eq:1609211456} 
 to  find a continuous function $f_1(\lambda,\sigma)$ ($\lambda$ being
 close to $\lambda_c$) 
such that
\begin{align*}
  \partial_\lambda\Theta_1(\lambda,t,r,\sigma)
=b_{\rm sr}^{-1}(r-r_1) +\tfrac 12 b_{\rm sr}^{-3}\int_{r_1}^r Q\,\d
  s+f_1 +O(t^{-\epsilon})-t,
\end{align*}
where $\epsilon\in(0,1)$ is a constant 
satisfying \eqref{eq:dollard} and 
\begin{align*}
Q
=Q(x)=2(q^{\rm ex}_1-\lambda_0(\sigma)).
\end{align*}
Alternatively, we can rewrite 
for some  continuous function $f_2(\lambda,\sigma)$
\begin{align*}
  \partial_\lambda\Theta_1(\lambda,t,r,\sigma)
=b_{\rm sr}^{-1}(r-r_0) 
+\tfrac 12 b_{\rm sr}^{-3}\int_{r_0}^r Q\,\d
  s+f_2 +O(t^{-\epsilon})-t.
\end{align*} 
Let us substitute $\lambda=\lambda_c(t,r,\sigma)$
and abbreviate
$f_2=f_2(\lambda_c,\sigma)$
and $b_{\rm sr}=b_{\rm sr}(\lambda_c,\sigma)$.
This leads to
\begin{align}\label{eq:1asy}
  b_{\rm sr}=\tfrac{r-r_0}t+\tfrac 12 \tfrac t{(r-r_0)^2}\int_{r_0}^r Q\,\d
  s+\tfrac {(r-r_0)} {t^2}f_2 +O(t^{-1-\epsilon}),
\end{align} and consequently 
\begin{align}\label{eq:2asy}
  \lambda_c=\lambda_0(\sigma)+\tfrac 12 \tfrac {(r-r_0)^2}{t^2}+\tfrac 12 \tfrac 1{(r-r_0)}\int_{r_0}^r Q\,\d
  s+\tfrac {(r-r_0)^2} {t^3}f_2  +O(t^{-1-\epsilon}). 
\end{align} 
In particular 
\begin{align*}
  \lambda_c=\lambda_0(\sigma)+\tfrac 12 \tfrac {(r-r_0)^2}{t^2}+O(t^{-(1+\epsilon)/2}). 
\end{align*} In turn using
\begin{align*}
    \partial_r\lambda_c=
    \biggl(b_{\lambda_c}(r,\sigma)\int_{{r_1}}^r b_{\lambda_c}(s,\sigma)^{-3}\,\mathrm{d}s\biggr)^{-1},
  \end{align*} we obtain 
\begin{align*}
    \partial_r\lambda_c= (r-r_0)/t^2+O(t^{-(3+\epsilon)/2}).
  \end{align*} Finally we compute using \eqref{eq:1asy} and \eqref{eq:2asy}
  \begin{align*}
    &K_{\rm do}-\int_{{r_0}}^r
b_{{\rm do},\lambda_c}(
s,\sigma)\,\mathrm{d}s+\lambda_c t\\
&=K_{\rm do}-(r-r_0)
\biggl((r-r_0)/t+\tfrac 12 \tfrac t{(r-r_0)^2}\int_{r_0}^r Q\,\d
  s+\tfrac {(r-r_0)} {t^2}f_2 \biggr)+ \tfrac 12 \tfrac t{(r-r_0)}\int_{r_0}^r Q\,\d
  s\\&\phantom{{}={}}{}
+\biggl(\lambda_0(\sigma)+\tfrac 12 \tfrac {(r-r_0)^2}{t^2}+\tfrac 12 \tfrac 1{(r-r_0)}\int_{r_0}^r Q\,\d
  s+\tfrac {(r-r_0)^2} {t^3}f_2 \biggr)t+O(t^{-\epsilon})\\
&=O(t^{-\epsilon}).
  \end{align*} 
Whence we have shown that \eqref{eq:asysimb} holds.

The rest of the assertions is clear from 
\eqref{eq:53dol}, \eqref{eq:asysim},
Theorem~\ref{thm:14.3.12.18.20} and 
Corollary~\ref{cor:dollard}.
\end{proof}

\begin{proof}[Proof of Theorem~\ref{thm:14.3.12.18.20sr}]
{As we already noted $q$ is of
Dollard type since it is of short-range type. }
Let $\theta=\theta(\sigma, \lambda)$ be the real-valued 
function appearing in Corollary~\ref{cor:strongbb}. 
Similarly to the proof of Theorem~\ref{thm:14.3.12.18.20dol},
in order to prove the existence of the limit \eqref{eq:53sr} it
suffices to show that for all $h\in C^2_{\mathrm c}(\mathcal I\times
S)$ 
\begin{align}\label{eq:asysimc}
  \|U_{\rm sr}^+(t)\check h- U^+_0(t)h\|\to 0
\text{ as }t\to  \infty;\quad  h:=\e^{\i \theta}\check h.
\end{align}
If we compare the expressions \eqref{eq:16091819} and \eqref{17053120},
then we can see that \eqref{eq:asysimc} follows from \eqref{eq:asysim}.
Here we note that the functions $\theta$ in 
\eqref{eq:asysim} and \eqref{eq:asysimc} are 
chosen differently as in  Corollary~\ref{cor:dollard} and 
Corollary~\ref{cor:strongbb}, respectively.
Hence we are  done with \eqref{eq:53sr}.

The rest of the assertions is clear from 
\eqref{eq:53sr}, \eqref{eq:asysimc},
Theorem~\ref{thm:14.3.12.18.20} and 
Corollary~\ref{cor:strongbb}.
\end{proof}

We above proof is short since we could use the proof of Theorem
\ref{thm:14.3.12.18.20dol}, more precisely \eqref{eq:asysim}. A
different and 
possibly more appealing procedure would be show \eqref{eq:asysimc}
directly 
by 
mimicking  the proof of Theorem \ref{thm:14.3.12.18.20dol} for  the
short-range setting.

\begin{proof}[Proof of Theorem~\ref{thm:14.3.12.18.20srb}]
Under Condition~\ref{cond:14.5.24.7.4} the existence of the limits
\eqref{eq:53srb} follows by \cite{IS1}.
Then by Lemma~\ref{lem:170601} and \eqref{eq:asysimc} 
the assumptions of Lemma~\ref{lem:14.5.8.13.51} and 
Corollary~\ref{cor:14.5.10.7.30} are fulfilled.
Now we can argue as at the end of the proof of of
  Theorem~\ref{thm:14.3.12.18.20sr} and complete the proof of the  theorem.
\end{proof}

\end{document}